\DeclareMathAlphabet{\pazocal}{OMS}{zplm}{m}{n}
\newcommand{\R}{\mathbb R}
\newcommand{\N}{\mathbb N}
\newcommand{\Z}{\mathbb{Z}}
\newcommand{\eps}{{\varepsilon}}
\newcommand{\beq}{\begin{equation}}
\newcommand{\eeq}{\end{equation}}
\newcommand{\beqs}{\begin{eqnarray}}
\newcommand{\eeqs}{\end{eqnarray}}
\newcommand{\beql}{\begin{equation} \label}
\newcommand{\half}{\frac{1}{2}}
\newcommand{\del}{\delta}
\newcommand{\calF}{{\cal F}}
\newtheorem{proposition}{Proposition}[section]
\newcommand{\p}{\partial}
\numberwithin{equation}{section}
\newcommand{\dee}{\mathcal{D}}
\newcommand{\rmd}{{\rm d}}
\newcommand{\ddt}{\frac{\rmd}{\rmd t}}
\newcommand{\D}{\partial}
\newcommand{\etal}{{\it et al.}}
\DeclareMathOperator{\dom}{\rm dom}
\DeclareMathOperator{\interior}{\rm int}
\DeclareMathOperator{\diag}{\rm diag}
\DeclareMathOperator{\sech}{\rm sech}
\DeclareMathOperator{\sinc}{\rm sinc}
\newcommand{\Snu}{{\mathfrak S}}  % change \calS to \Snu
\newcommand{\ma}{\mathsf{a}}
\newcommand{\mK}{\mathsf{K}}
\newcommand{\uinf}{u_\infty}
\newcommand{\inv}{^{-1}}
\date{}
\begin{document}

\title{Traveling wave profiles for a semi-discrete Burgers equation}

\author{Uditnarayan Kouskiya\thanks{Department of Civil \& Environmental Engineering, Carnegie Mellon University, Pittsburgh, PA 15213, email: udk@andrew.cmu.edu.} $\qquad$ Robert L. Pego\thanks{Department of Mathematical Sciences, and Center for Nonlinear Analysis, Carnegie Mellon University, Pittsburgh, PA 15213, email: rpego@cmu.edu.} $\qquad$ Amit Acharya\thanks{Department of Civil \& Environmental Engineering, and Center for Nonlinear Analysis, Carnegie Mellon University, Pittsburgh, PA 15213, email: acharyaamit@cmu.edu.}}

\maketitle

\begin{abstract}
\noindent 
We look for traveling waves of the semi-discrete conservation law
$4\dot u_j +u_{j+1}^2-u_{j-1}^2 = 0$, using variational principles
related to concepts of ``hidden convexity'' appearing in 
recent studies of various PDE (partial differential equations).
We analyze and numerically compute with two variational formulations 
related to dual convex optimization problems constrained by either
the differential-difference equation (DDE) or nonlinear integral equation (NIE)
that wave profiles should satisfy.
We prove existence theorems conditional on the existence of extrema
that satisfy a strict convexity criterion, and numerically
exhibit a variety of localized, periodic and non-periodic wave phenomena.
\end{abstract}
\section{Introduction}

A great many types 
of dynamic behavior are known to arise from 
different approximation schemes for solutions to the inviscid Burgers equation 
\begin{equation}
 \D_t u + \D_x \left(\half u^2 \right) = 0, \quad -\infty<x<\infty, \quad t>0.
 \label{eq:iBurgers}
\end{equation}
In particular, conservative and dispersive approximations of this equation often exhibit {\it dispersive shocks}. 
As a dispersive shock develops, its structure is that of a modulated envelope of periodic waves, 
and at its leading edge one sees the emergence of several solitary waves.
While a great deal is known about dispersive shocks for integrable approximations to \eqref{eq:iBurgers}, 
little appears to be understood for non-integrable approximations.

A recent study by Sprenger~{\it et al.}~\cite{sprenger2024hydrodynamics} focuses on a non-integrable approximation
generated by simple centered differences in space: 
With $u_j(t)\approx u(jh,ht)$, one obtains the equations
\begin{equation}
   \ddt u_j + \frac14\left(u_{j+1}^2 - u_{j-1}^2 \right) = 0 \, \qquad j\in \Z.
    \label{eq:dB}
\end{equation}
In addition to dispersive shocks,  Sprenger~\etal\ illustrate  many other different
regimes and types of solutions in numerical simulations of these equations.
They use Whitham modulation theory and weakly nonlinear asymptotics to
explain some of the observed phenomena.  Other interesting behaviors that were observed include
highly nonlinear phenomena such as discontinuous waves connecting periodic solutions 
to constant states, and periodic waves emerging from both sides of a discontinuity.

In this article we focus on studying periodic and solitary traveling waves of 
the semi-discrete Burgers equations~\eqref{eq:dB}.  
Such waves must take the form  
\begin{equation}\label{eq:SWform}
u_j(t) = f(j-ct),
\end{equation}
where the wave profile $f$ satisfies the differential-difference equation (DDE)
\begin{equation}
  -c f'(x) + \frac14 \left(f(x+1)^2-f(x-1)^2\right)  = 0. 
\label{eq:ADDE}
\end{equation}
Upon integration we find this is equivalent to the nonlinear integral equation (NIE)
\begin{equation}
  -c f(x) + \frac14 \int_{x-1}^{x+1} f(z)^2\,dz = C_0 ,
\label{eq:int}
\end{equation}
where $C_0$ is a constant.

For our study we will adapt the dual variational framework that has been developed for a variety of problems in \cite{Acharya:2024:HCC,Acharya:2023:DVP,Acharya:2022:VPN}
and computationally demonstrated in \cite{Kouskiya:2024:HCH,KA2,sga}, 
with rigorous results presented in \cite{sga, ASZ}. 
The approach is closely related to the idea of `hidden convexity' in nonlinear PDE developed by 
Brenier \cite{brenier2018initial,brenier_book}. Brenier's approach has recently been extended and employed by Vorotnikov~\cite{vorotnikov2022partial,vorotnikov2025hidden} to establish a version of 
Dafermos' maximum entropy rate principle for conservation laws, 
and by Mirebeau and Stampfli \cite{MirebeauStampfli}
to establish rates of convergence to smooth solutions for discretization schemes for
multidimensional quadratic porous medium and (in)viscid Burgers equations.

We utilize this approach for both numerical and theoretical reasons.
Regarding the numerical computation of steady wave profiles, a method known as 
{\it Petviashvili iteration} often works rather well in practice~\cite{petviashvili1976equation,PelinovskyStepanyants2004}.
Equation~\eqref{eq:int} may be the simplest for which this method can be applied, in fact. 
The global convergence properties of Petviashvili iteration are not  very well understood, however.  
The work of Pelinovsky and Stepanyants~\cite{PelinovskyStepanyants2004} established criteria for local convergence or divergence,
but requires certain assumptions about the spectrum of the linearization at an exact wave profile,
assumptions which are not known to hold in many settings, including that of \eqref{eq:int}.

Regarding theory, we are not aware of any proof of existence
for any non-trivial traveling waves of the semi-discrete Burgers equations~\eqref{eq:dB},
except that the approach of Herrmann~\cite{Herrmann_oscillatory} may capture periodic solutions that
are strictly of one sign after modifying the flux function $\Phi'(u)=\frac12 u^2$ 
to be strictly monotonic and $C^1$.
The solitary-wave problem appears similar to that for solitary waves in Fermi-Pasta-Ulam type 
particle lattices involving nearest-neighbor forces, which is the subject of 
a recent review by Vainchtein~\cite{Vainchtein2022}.
Variational methods based on concentration compactness or mountain-pass
methods have been used to prove existence theorems for solitary waves in 
Fermi-Pasta-Ulam lattices \cite{Friesecke.Wattis.94,SmetsWillem1997,Herrmann.10} and 
related equations of peridynamics~\cite{PegoVan2019,HerrmannKlein2024}.
But we are not aware of an existing variational formulation for the traveling wave
profile equations \eqref{eq:ADDE} or \eqref{eq:int}.  
The general variational framework of Herrmann and Matthies \cite{HerrmannMatthies2020variational} 
deals with equations that resemble~\eqref{eq:int}, but does not evidently apply
due to the fact that the characteristic function of an interval 
has a sign-changing Fourier transform and thus is not a convolution square.

For waves that are small-amplitude long-wave perturbations of a non-zero constant state, 
a formal Korteweg-de Vries (KdV) approximation can be described, as one may expect,
and we describe this in Appendix~\ref{s.KdV}.  
It is plausible that existence of waves in this regime could be established by adapting 
existing fixed-point and pertubation methods~\cite{Friesecke.Pego.99,Iooss2000,IoossJames2005,James2012,
HerrmannMikikitsLeitner2016,Ingimarson_2024}.
On the other hand, large-amplitude waves, and any perturbations of the zero solution,
are in a completely nonlinear regime inaccessible by such methods. 
Perhaps existence results might be had by methods based on topological degree theory, 
though, like those in \cite{BenjaminBonaBose1990,BonaChen2002}.

The dual variational formulation that we will study here provides a flexible method 
for exploring the solution space numerically.  
Using it we also prove conditional existence theorems for traveling waves. 
We prove the existence of extrema which determine an exact traveling
wave solution, conditional upon a certain domain constraint being strictly satisfied.
In many cases, our numerical computations strongly indicate that the domain constraint indeed strictly holds.
But we do not claim any mathematically rigorous existence proof.

A useful feature of our approach is that it provides a well-set solution strategy without imposing further conditions on the fundamental problem statement for ones not naturally posed with boundary conditions (such as the traveling wave problem). We exploit this feature in our formulation and computations.

Let us briefly summarize some of the key results of our computations. 
We find a considerable variety of periodic and non-periodic wave profiles on long intervals.
Our computations of waves with localized structure suggest
that solitary wave solutions should exist having limit states 
\begin{equation}\label{eq:baru}
\bar u = \lim_{t\to\pm\infty} u_j(t)\,,
\end{equation}
whenever a {\em phase-speed non-matching condition} holds.  
Namely, a solitary wave of the form \eqref{eq:SWform} that satisfies \eqref{eq:baru}
should exist whenever the speed $c$ does not match the phase velocity 
$\omega/\xi$ of any linear harmonic wave  $v_j(t)=e^{i\xi j-i\omega t}$ for the equations 
\eqref{eq:dB} linearized at the constant state $\bar u$. 
As the dispersion relation for this linearized equation is
$\omega = \bar u\sin \xi$, 
this means that the phase-speed non-matching condition reads
\begin{equation}
c \ne \bar u \sinc \xi \qquad\text{for all $\xi\in\R$,}
\end{equation}
where $\sinc\xi = \sin\xi/\xi$ for $\xi\ne0$ (and $=1$ for $\xi=0$).

The wave profiles that we find develop different types of long-wave structure in 
two regimes in which phase-speed non-matching is breaking down. 
In one regime, $c\approx \bar u$. This corresponds to the KdV long-wave regime, and indeed
we see wide, small-amplitude single-hump wave profiles there.
In the other regime, $c\approx \bar u\sinc \xi_*$ 
where $\xi_*\approx 4.4934$ is the value which minimizes $\sinc\xi$.
Here we see wave profiles oscillating with wave number near $\xi_*$ but
modulated with an envelope that slowly decays away.
Note that at the critical wave number $\xi_*$, the phase velocity matches 
the group velocity $d\omega/d\xi=\bar u\cos\xi$ of wave packets. 
A recent work by Kozyreff~\cite{Kozyreff2023} studies wave propagation 
in such a regime. Kozyreff's study involves the asymptotics of exponentially small terms, 
a topic beyond the scope of the present paper.

A further interesting result from our computations is that we find 
a range of cases where our optimization method finds localized waves with oscillatory
decaying tails, but Petviashvili iteration, initiated with the resulting wave shape, goes unstable and fails to converge.  

As our focus in the present paper is to demonstrate the utility of the variational method for 
finding wave solutions, 
we leave systematic investigation and classification of the families of nonlinear wave solutions of 
the semi-discrete Burgers system \eqref{eq:dB} for future research.
Clearly, there remain many issues to be understood more thoroughly and more rigorously.

\section{Variational formulations}
By a simple scaling (replacing $f$ by $-2cf$), equation~\eqref{eq:ADDE} for nontrivial traveling wave profiles
can be reduced to the following DDE that corresponds to setting $c=-\frac12$ in \eqref{eq:ADDE}:
\begin{equation}
     f'(x) + \frac{1}{2}\Big(\,(f(x+1))^2 - (f(x-1))^2\,\Big) = 0,
    \label{eq:primal}
\end{equation}
where $f:\mathbb{R}\rightarrow\mathbb{R}$ and $(\cdot)'\equiv\frac{d(\cdot)}{dx}$. 
Correspondingly, equation \eqref{eq:int} is reduced to the NIE
\begin{equation}\label{eq:int_cm1}
    f(x) + \frac12 \int_{x-1}^{x+1}f(y)^2\,dy = C_1,
\end{equation}
where $C_1=\half C_0/c^2$.
In this section, we will describe two related dual variational formulations for this problem, 
one that starts directly from the DDE \eqref{eq:primal} and 
one that starts from the NIE~\eqref{eq:int_cm1}.
The formulations have slightly different numerical and analytical properties, to be compared in later sections.

Schematically, the formulations we study are dual to primal optimization problems of a classic form. 
Namely, as discussed in \cite{brenier2018initial,Acharya:2024:HCC},
one seeks to  minimize an integral functional $\int H(f,x)\,dx$, constrained by the equation in question.
If $Q[f]=0$ corresponds to the equation in question, one introduces a dual field (Lagrange multiplier) $\lambda(x)$
and  writes the primal problem in the inf-sup formulation
\[
\text{Find}\quad \inf_f \sup_\lambda \int \left(\lambda Q[f] + H(f,x) \right) \,dx \,.
\]
This primal problem is set only as a device, however.  The point is that the dual problem, obtained by interchanging inf and sup,
is one that we can exploit to obtain information about solutions of $Q[f]=0$ both computationally and theoretically.\footnote{The term `duality' is used  in the physics literature when two superficially distinct theoretical structures can be mapped onto each other to facilitate the approximation of difficult nonlinear problems. This served as the initial motivation for the use of the term for the method adopted here, as discussed in \cite[Sec.~1]{acharya2022action}.}

Of particular importance is the realization that $Q[f]=0$ is a component of the first-order optimality conditions of both the $\sup_\lambda \inf_f$ and $\inf_f \sup_\lambda$ problem statements, \textit{regardless of the choice of $H$}. Thus, our solution strategy exploits the use of not just a single $H$ function, but a family of adapted, convex ones, allowing parametrization by base states which are specified functions or trajectories that can encode knowledge about approximate solutions \cite{Acharya:2023:DVP,Acharya:2024:HCC}. Indeed, base states are utilized in a crucial, adaptive way in our algorithm in Sec.~\ref{sec:numerical}.

Below, for any field $(\cdot)$, we will use the following notation:
\begin{equation*}
(\cdot)(x+h) \equiv
    (\cdot)_{x+h} 
\end{equation*}
In cases when $h=0$, we will utilize the notations $(\cdot)(x)\equiv (\cdot)_x$ and $(\cdot)(x)\equiv (\cdot)$ interchangeably.

\subsection{Dual DDE formulation}\label{sec:DDE_formulation}
Considering a dual field $\lambda$ corresponding to \eqref{eq:primal}, define the following  \textit{pre-dual} functional:
\begin{equation}
\label{eq:predual}
    \widehat{S}[f,\lambda] = \int_{-\infty}^{\infty} \,\left( - \lambda'\, f + \frac{\lambda}{2} (\,f_{x+1}^2 - f_{x-1}^2\,)\, + \, H(f,x)\right) dx,
\end{equation}
where $H$ is a free auxiliary function of the arguments shown. Since the limits of integration involved in the last equation are from $-\infty$ to $\infty$, we can write:
\begin{equation}
\label{eq:non_local_shift}
    \int_{-\infty}^{\infty} \lambda \,f_{x+1}^2 \, dx = \int_{-\infty}^{\infty} \lambda_{x - 1} \, f^2 \, dx\quad \mbox{and} \quad 
    \int_{-\infty}^{\infty} \lambda \,f_{x-1}^2 \, dx = \int_{-\infty}^{\infty} \lambda_{x + 1} \, f^2.
\end{equation}
Equation \eqref{eq:predual} can thus be rewritten as:
\begin{equation}\label{eq:hatS2}
\widehat{S}[f,\lambda] = \int_{-\infty}^{\infty} \left(  -\lambda' \, f + \frac{f^2}{2} (\lambda_{x-1} - \lambda_{x+1}) + H(f,x) \right) dx = \int_{-\infty}^{\infty} \mathcal{L}_H(f,\mathcal{D},x) \, dx,
\end{equation}
where $\dee:=\{\lambda_{x-1},\lambda',\lambda_{x+1}\}$ and we refer to the integrand $\mathcal{L}_H$ as the \textit{Lagrangian} for the current problem. The subscript $H$ in $\mathcal{L}_H$ denotes its dependence on the chosen auxiliary function. 
We now impose the following condition 
\begin{equation}
\label{eq:predtp}
    \frac{\p \mathcal{L}_H}{\p f} = 0: \quad  - \lambda' + f (\,\lambda_{x-1} - \lambda_{x+1}\,)\, + \, \frac{\p H}{\p f}=0,
\end{equation}
to solve for $f$ in terms of the dual objects  
\[
\dee := (\lambda', \lambda_{x - 1}, \lambda_{x+1}).
\]
The choice of the function $H$ is made to enable this step, for a substantial class of dual fields. One such choice of $H$ is
\begin{equation}
    H(f,x) =: \hat{H}(f(x),\bar{f}(x)) = \frac{\mathsf{a}}{2}(f - \bar{f})^2,
    \label{eq:auxiliary_func}
\end{equation}
where $\bar{f}:\mathbb{R}\rightarrow\mathbb{R}$ represents a base state and $\mathsf{a}\gg0$.
The choice of $\ma$ is discussed at the end of this subsection and 
in subsection~\ref{sec:dde_numerics}.

For such a choice of $H$, 
equation \eqref{eq:predtp} takes the form
\begin{equation}
         (\mathsf{a} + \lambda_{x-1} - \lambda_{x+1})f  =  \ma \bar f + \lambda' \,,
\end{equation}
and we define the following \textit{Dual-to-Primal} $(DtP)$ mapping to provide a solution for $f$
when one exists:
\begin{equation}
\begin{aligned}
    & f^{(H)}(\dee,\mathsf{a},x)   = \begin{cases}
    \frac{\ma \bar f + \lambda' }{\ma + \lambda_{x-1}-\lambda_{x+1}} 
    = \bar{f} \, + \, \frac{\lambda' - \bar{f}(\lambda_{x-1} - \lambda_{x+1})}{\mathsf{a} + \lambda_{x-1} - \lambda_{x+1}}
    \quad \mbox{ if } \ \mathsf{a} + \lambda_{x-1} - \lambda_{x+1} \neq 0 \\
   & \\
    0 \quad \mbox{ if } \ \ma + \lambda_{x-1}-\lambda_{x+1} = 0 \mbox{ and } \ \ma \bar f + \lambda' = 0 \end{cases}\\
\end{aligned}
\label{eq:dtp}
\end{equation}
We will often write $\hat{f}:= f^{(H)}(\dee,\mathsf{a},x)$. The dual functional can now be defined as:

\begin{equation}
\begin{aligned}
S[\lambda]  := \widehat{S}[\hat{f},\lambda] & = \int_{-\infty}^{\infty} \left( - \lambda' \, \hat{f} + \frac{\hat{f}\,^2}{2} (\lambda_{x-1} - \lambda_{x+1}) +  \frac{\mathsf{a}}{2}(\hat{f} - \bar{f})^2\right) dx
\end{aligned}
\label{eq:functional_1}
\end{equation}
which can be rewritten completely in terms of dual variables as:
\begin{equation}\label{eq:S_rlp}
\begin{aligned}
S[\lambda]  &= \int_{-\infty}^{\infty} \left( - \lambda' \, \hat{f} + \frac{\hat{f}\,^2}{2} (\lambda_{x-1} - \lambda_{x+1}) 
+ \ma \left(\frac{\hat f\,^2}2 - \bar f \hat f  +  \frac{\bar f^2}2\right)\right) dx\\
&=  \int_{-\infty}^\infty 
\left( (\ma+\lambda_{x-1}-\lambda_{x+1}) \frac{\hat f\,^2}2 - (\ma \bar f+\lambda')\hat f + \ma \frac{\bar f^2}2
\right) dx
\\
&=  \int_{-\infty}^\infty \left(
-
(\ma +\lambda_{x-1}-\lambda_{x+1}) 
\frac{\hat f\,^2}2 
+ \frac{\ma \bar f\,^2}2
\right)dx
\\ 
&= \frac12 \int_{-\infty}^\infty \left( -\frac{(\lambda'+\ma \bar f)^2}{ \ma +\lambda_{x-1}-\lambda_{x+1} }
+ {\ma \bar f\,^2}
\right)dx
\end{aligned}
\end{equation}
An alternate, but equivalent, expression arises by first writing $S[\lambda]$ in terms of $(\hat{f} - \bar{f})$:
\begin{multline*}
S[\lambda]  = \int_{-\infty}^{\infty} \left( - \lambda' \, \hat{f} + \frac{\hat{f}\,^2}{2} (\lambda_{x-1} - \lambda_{x+1}) 
+ \frac{\mathsf{a}}{2}(\hat f - \bar f)^2 \right) dx\\
= \int_{-\infty}^{\infty} \left( - \lambda' \, (\hat{f}-\bar{f}) + \frac{(\hat{f}-\bar{f})\,^2}{2} (\lambda_{x-1} - \lambda_{x+1}) + \frac{\mathsf{a}}{2}\,(\hat{f} - \bar{f})^2 \right) dx \\ 
+ \int_{-\infty}^{\infty}\,(\hat{f} - \bar{f})\bar{f}(\lambda_{x-1} - \lambda_{x+1}) + \int_{-\infty}^{\infty} -\lambda'\, \bar{f} + \frac{\bar{f}\,^2}{2}(\lambda_{x-1} - \lambda_{x+1})\,dx
\end{multline*}
which reduces, after collecting and combining linear and quadratic terms in $(\hat{f} - \bar{f})$ using the DtP mapping, to
\begin{equation}
S[\lambda] = \int_{-\infty}^{\infty} -\frac{1}{2\,\Delta_\lambda} \,\Big( \lambda' - \bar{f}(\lambda_{x-1} - \lambda_{x+1})\Big)^2\,dx + \int_{-\infty}^{\infty} -\lambda'\, \bar{f} + \frac{\bar{f}\,^2}{2}(\lambda_{x-1} - \lambda_{x+1})\,dx ,
\label{eq:functional_2}
\end{equation}
where
\begin{equation}
    \Delta_\lambda := \mathsf{a} + \lambda_{x-1} - \lambda_{x+1}.
    \label{eq:denominator}
\end{equation}

The first variation of \eqref{eq:functional_1} in a direction $\del \lambda$ is given by
\begin{subequations}
\begin{align}
\notag \del^{(1)}S[\lambda;\del \lambda] &= {\int_{-\infty}^{\infty}\left(\frac{\p \mathcal{L}_H}{\p f} \frac{\p \hat{f}}{\p \mathcal{D}}\,\del \mathcal{D} +\frac{\p \mathcal{L}_H}{\p \mathcal{D}}\,\del \mathcal{D} \right) \,dx= \int_{-\infty}^{\infty} \left(\frac{\p \mathcal{L}_H}{\p \mathcal{D}}\,\del \mathcal{D} \right)\,dx} \notag\\ 
&= \int_{-\infty}^{\infty} \left( - \del \lambda' \, \hat{f} + \frac{\hat{f}\,^2}{2} (\del\lambda_{x-1} - \del\lambda_{x+1}) \right) dx, \label{eq:second_form_DDE} \\
& =  \int_{-\infty}^{\infty} \del \lambda\left( \hat{f}' + \frac{1}{2} (\hat{f}\,^2_{x+1} - \hat{f}\,^2_{x-1}) \right) dx. \label{eq:first_form_DDE}
\end{align}
\end{subequations}
where we have used
\begin{equation}   \label{eq:inf_bc}
\del \lambda \to 0 \quad\mbox{ as } x\rightarrow\pm \infty.
\end{equation}
 On requiring $\del^{(1)}S[\lambda;\del \lambda] = 0 $ for any $\delta \lambda$ satisfying \eqref{eq:inf_bc}, the Euler-Lagrange equation for $S[\lambda]$ is given by
\begin{equation}
    \hat{f}'(x) + \frac{1}{2}\Big(\,(\hat{f}(x+1))^2 - (\hat{f}(x-1))^2\,\Big) = 0 \quad \forall x\in(-\infty,\infty),
    \label{eq:primal_in_dual}
\end{equation}
which is the same as \eqref{eq:primal} now written only in terms of the dual variables. 

Additionally, one can always choose $\delta \lambda(x) =0 $ for $x\notin(a,b)$ to establish \eqref{eq:primal_in_dual} only for $x\in(a,b)$.

An important consistency check of our scheme is that for each solution, say $f^*$, to the primal problem, there is at least one dual functional whose critical point corresponds to that solution. That functional is constructed simply by the choice of $\bar{f} = f^*$ and the critical point is given by $\lambda = 0$, as can be directly read off from the DtP mapping and the fundamental justification of the scheme that the primal equation forms the Euler-Lagrange equation of the dual functional with DtP mapping substituted.

{\em Concave maximization.}
Having demonstrated the consistency of our scheme with the problem \eqref{eq:primal} as a critical point problem of the dual functional \eqref{eq:functional_2}, for practical purposes we consider a pure maximization problem on a bounded domain.
We require the dual fields $\lambda$ to vanish outside a given finite interval $\Omega=(-L,L)$, and
consider a related functional given by
\begin{equation}\label{eq:sup_inf}
\begin{aligned}
    \tilde{S}[\lambda] := \inf_f \widehat{S}_L[f,\lambda] & = \inf_f 
    \int_{\Omega_1}
    \bigg( - \lambda' f + \frac{f^2}{2} ( \lambda_{x-1} - \lambda_{x+1}) + \frac{\mathsf{a}}{2} (f - \bar{f})\,^2 \bigg) dx\\
                    &  = 
		    \int_{\Omega_1}
		    \inf_f \bigg( \frac{f^2}{2} ( \mathsf{a} + \lambda_{x-1} - \lambda_{x+1}) - f \left(\lambda' + \mathsf{a} \bar{f}\, \right) + \frac{\mathsf{a}}{2} {\bar{f}}\,^2 \bigg) dx,
\end{aligned}
\end{equation}
where $\widehat{S}_L$ is exactly the functional $\widehat{S}$ from \eqref{eq:hatS2}, 
restricting the domain of integration to the minimal interval $\Omega_1:=(-L-1,L+1)$ that accomodates all nonvanishing values of 
$\lambda_{x-1}$ and $\lambda_{x+1}$ to be considered.
Noting that the Lagrangian of $\widehat{S}_L$ is affine in $\dee$, the integrand of $\tilde{S}$ must be concave in $\dee$. Furthermore,
\begin{equation}\label{eq:tilde_S}
\tilde{S}[\lambda] = \begin{cases} S_L[\lambda] \qquad \mbox{for } \lambda \mbox{ s.t. } a.e. \ (\mathsf{a} + \lambda_{x-1} - \lambda_{x+1}) \geq 0 \mbox{ and } \\
                                    \qquad \qquad \mbox{ if } (\mathsf{a} + \lambda_{x-1} - \lambda_{x+1}) = 0 \ \mbox{ then } \lambda'(x) + \mathsf{a} \bar{f}(x) = 0, \\
                             - \infty \qquad \mbox{otherwise},
                             \end{cases}
\end{equation}
where $S_L$ is the functional $S$ from \eqref{eq:functional_1} integrated on the interval $\Omega_1$.
This is so because at points where $\Delta_\lambda > 0$ the integrand of $\tilde{S}$ has a unique minimizer (over $f$) given by the integrand of $S$, and at $x$ where $\Delta_\lambda (x) = 0$, the associated condition $\lambda'(x) + \mathsf{a} \bar{f}(x) = 0$ again ensures that the integrands of $\tilde{S}$ and $S_L$ match, as can be seen from \eqref{eq:functional_1}-\eqref{eq:dtp}. Thus, finding a critical point of $\tilde{S}$ corresponds to a \textit{concave maximization} problem. Moreover, in such a maximization, $\lambda$ fields which do not satisfy the \textit{Convexity Condition}
\begin{equation}
\mathscr{C}: \Delta_\lambda = \quad (\mathsf{a} + \lambda_{x-1} - \lambda_{x+1}) \geq 0 \quad a.e.
\label{eq:convexity_cond}
\end{equation}
cannot be competitors for being a maximizer, and it is best to not be concerned with such fields (we note that $S_L[\lambda]$ need not be concave over the set of all $\lambda$ fields); alternatively, we can simply choose to seek maximizers of $S_L[\lambda]$ in the reduced set of fields which satisfy $\mathscr{C}$. 

We will utilize the above insights in our numerical scheme by \textit{looking for critical points of $S_L[\lambda]$ constrained by $\mathscr{C}$.}
Some rigorous analytical properties of the concave functional $\tilde S$ will be developed in Section~\ref{sec:analysis} below.
We note here that the condition \eqref{eq:convexity_cond} is the analog, in this non-local setting, of the condition that guarantees a local degenerate ellipticity of the dual critical point problem in the PDE case, as defined in \cite[Sec.~3]{Acharya:2024:HCC} and applied to the inviscid Burgers equation in \cite[Sec.~2]{KA2}.

{\em A scaling symmetry.} 
The pre-dual and dual functionals defined above depend in a simple
way upon the parameter $\ma>0$ that scales the amplitude of the function $H(f,x)$
in \eqref{eq:auxiliary_func}. 
If we make this dependence upon $\ma$ explicit, then it is evident 
from \eqref{eq:hatS2} that the pre-dual functional $\hat S$ satisfies
\begin{equation}
\label{eq:DDE_map_invariance}
\hat S(f,\lambda,\ma) = \ma\, \hat S(f,\lambda/\ma,1),
\end{equation}
the DtP map in \eqref{eq:dtp} satisfies
\begin{equation}
f^{(H)}(\mathcal{D},\ma,x) = f^{(H)}(\mathcal{D}/\ma,1,x),
\end{equation}
and the functional $S$ in \eqref{eq:functional_1} satisfies
\begin{equation}
\label{eq:DDE_variation_invariance}
S[\lambda,\ma] = \ma\, S[\lambda/\ma,1].
\end{equation}
Thus, increasing $\ma$ from $1$ simply produces a proportional increase in $S$ at a scaled down
argument $\lambda/\ma$. The first variation remains invariant with scaled arguments, satisfying
\begin{equation}
    \delta^{(1)}S[\lambda,\ma;\delta\lambda] =  \delta^{(1)}S[\lambda/\ma,1;\delta\lambda/\ma] 
\end{equation}
and correspondingly the second variation is inversely proportional to $\ma$.

What this means is that, for our present choice of the function $H$ in the Lagrangian,
the choice of $\ma$ makes no difference {\em in theory} for the purpose
of finding primal solutions in the form $\hat f = f^{(H)}(\mathcal{D},\ma,x)$ 
for critical points $\lambda$. With a different value of $\ma$ the location of the critical
points simply scales proportionally while $\hat f$ remains the same. 
In practice, however, we find it convenient to choose $\ma$ to be somewhat large.
In particular, this makes the convexity condition \eqref{eq:convexity_cond} easier to satisfy 
with numerically chosen functions $\lambda$ without having to worry about scaling down their amplitude.
The choice of $\ma$ in principle also has some effect on numerical schemes and stopping criteria.
We will discuss these issues further in Section~\ref{sec:dde_numerics} and Appendix~\ref{app:beta}.
We note that with other choices of $H$, parameters in its definition may not lead to this  kind of scaling symmetry.

\subsection{Dual NIE Formulation}
\label{sec:NIEform}

We will derive an alternative dual variational formulation 
by starting from the nonlinear integral equation \eqref{eq:int_cm1} instead of 
the differential-difference equation \eqref{eq:primal}
and formulating the problem in terms of a corresponding dual field $\nu$.
This leads to some differences in terms of the approximations that are natural to make
and the results obtained. 
We will work with both approaches and compare them at the end.

In the simplest case when $C_1=0$, we can consider $\nu=-\lambda'$, and we can then write 
\[
\lambda_{x-1}-\lambda_{x+1} = \int_{x-1}^{x+1}\nu(y)\,dy = \int_{-1}^1 \nu(x+z)\,dz.
\]
Define the right-hand side to be $K\nu(x)$. Then $K$ is a linear convolution operator satisfying
\begin{equation}\label{d:Knu}
K\nu (x) = \int_{-\infty}^\infty \Lambda(x-y)\nu(y)\,dy, \quad 
\Lambda(z) = \begin{cases}
    1 & |x|\le 1,\\
    0 & |x|>1.
\end{cases}
\end{equation}
Note that if $\nu$ is locally integrable on $\R$ then $K\nu$ is continuous, and if $\nu$
is periodic then $K\nu$ is periodic with the same period.
And, if $\mu$ and $\nu$ are $2L$-periodic locally square-integrable functions, then 
\begin{equation}\label{eq:Kadjoint}
   \int_{-L}^L \mu \,K\nu\,dx = \int_{-L}^L \nu \,K\mu\,dx. 
\end{equation}
Indeed, since the integral of any translate of a periodic function 
over any full period is the same, 
\begin{align*}
   \int_{-L}^L \mu\,K\nu\,dx 
    &= \int_{-L}^{L} \int_{-1}^1 \mu(x)\nu(x+z)\,dz\,dx
    =  \int_{-1}^1  \left(\int_{-L}^{L}  \mu(x)\nu(x+z)\,dx\right) dz
    \\ 
    &=  \int_{-1}^1  \left(\int_{-L}^{L}  \mu(x-z)\nu(x)\,dx\right) dz
    = \int_{-L}^L \nu\,K\mu\,dx \,.
\end{align*}

We will seek wave profiles $f$ as perturbations of a constant $\uinf$, taking the form
\begin{equation}
    \label{d:fform}
    f(x) = \uinf + w(x) \,.
\end{equation}
For periodic waves we require $w$ to be $2L$-periodic on the real line, and for solitary waves we say $L=\infty$ and
require $w(x)\to0$  as $|x|\to\infty$.  In these terms, equation~\eqref{eq:int_cm1} takes the form
\[
\uinf + w + \frac12 K\left( \uinf^2 + 2\uinf w + w^2\right) = C_1\,.
\]

We claim that it is no loss of generality to require 
\begin{equation}
  \label{eq:C1}
C_1=\uinf + \uinf^2\,.
\end{equation}
Indeed, if $f$ is a solitary wave profile, evidently \eqref{eq:C1} must hold.
If instead $f$ is a $2L$-periodic solution of \eqref{eq:int_cm1} with $L<\infty$, then
because $K(f^2)$ is $2L$-periodic and $K(1)=2$, 
we find
\[
\int_{-L}^L \tfrac12 K(f^2)\,dx = \int_{-L}^L f^2\,dx = 
\int_{-L}^L (-f + C_1)\,dx \le 
\int_{-L}^L (f^2 + \tfrac14 + C_1)\,dx\,,
\]
since $-f\le -f+(f+\frac12)^2 = f^2+\frac14$. 
Thus $\frac14+ C_1\ge0$ and \eqref{eq:C1} follows with $\uinf=-\frac12\pm\sqrt{\frac14+C_1}$. 

Equation \eqref{eq:int_cm1} now becomes equivalent to 
\begin{equation}
    \label{eq:int_wform}
  {  w + \uinf K w + \frac12 K(w^2) = 0\,,}
\end{equation}
which may be written more explicitly as the equation
\[
w(x) + \int_{x-1}^{x+1} \left(\uinf w(y)+\frac12 w(y)^2\right)\,dy =0\,.
\]
We are ready next to develop a dual variational formulation for equation~\eqref{eq:int_wform}.
Letting $\nu$ be a $2L$-periodic dual field and noting that \eqref{eq:Kadjoint} should hold with $\mu=w$ and $w^2$,
we define a pre-dual functional by 
\begin{equation} \label{d:hatSnu}
    \hat\Snu[w,\nu] = 
    \int_{-L}^L \left(w (\nu +\uinf K\nu) + \frac{w^2}2 K\nu + {\mathfrak{H}(w,x)}\right)dx
    = \int_{-L}^L {\mathfrak{L}_\mathfrak{H}}(w,\nu,x)\,dx \,.
\end{equation}
For convenience we take $\mathfrak{H}(w,x)$ in \eqref{eq:predual} in the modified form
\begin{equation}\label{eq:aux_nu}
    \mathfrak{H}(w,x)  = \frac{\ma}2\left((w-\bar w)^2-\bar w\,^2\right)
    = \ma\left(\frac{w^2}2 - w \bar w \right)  ,
\end{equation}
where $\bar w = \bar w(x)$ is a fixed base state, whence
\begin{equation}\label{d:LHnu}
{\mathfrak{L}_\mathfrak{H}}(w,\nu,x) = \frac{w^2}2(\ma+K\nu) + w(\nu+\uinf K\nu-\ma\bar w) \,.
\end{equation}

Aiming toward a well-posed dual maximization problem, notice that 
\begin{equation}\label{eq:LHnu}
    \inf_{w\in\R} {\mathfrak{L}_\mathfrak{H}}(w,\nu,x) = \begin{cases}
        -\infty & \text{ if $\ma+K\nu(x)<0$,}\\
        -\infty & \text{ if $\ma+K\nu(x)=0$ and $\ma\bar w(x)-\nu(x)-\uinf K\nu(x)\ne0$,}\\
        -\frac12(\ma+K\nu)\hat w\,^2 &\text{ otherwise},
    \end{cases}
\end{equation}
where $\hat w$ is given by a  Dual-to-Primal relation in the form
\begin{equation}\label{eq:DtPnu}
\hat w  = \begin{cases}
\displaystyle \frac{\mathsf{a} \bar w-\nu-\uinf K\nu}{\mathsf{a} +K\nu} 
    &\text{ if \  $\ma + K\nu \ne 0$,}
    \\
 0  & \text{ if \ $\ma + K\nu = 0$.}
\end{cases}
\end{equation}

Now, for $0<L<\infty$ 
and for any $\nu$ that is $2L$-periodic and locally square-integrable, we define
\begin{equation}\label{d:Snu1}
    \Snu[\nu] = \inf_{w} \hat\Snu[w,\nu].
\end{equation}
where the inf is taken over all  $w$ that are $2L$-periodic and locally square-integrable. 
If $L=\infty$ we require $v\in L^2(\R)$ and define $\Snu[\nu]$ by the same formula,
taking the inf over all $w\in L^2(\R)$.

Given any such $\nu$, define the sets
\begin{equation}
\begin{array}{ll}
   & N_\nu = \{x\in(-L,L): \ma+K\nu <0\}, %\qquad
     \\[6pt]
   & Z_\nu = \{x\in(-L,L): \ma+K\nu =0 \ \text{ and }\ \ma \bar w - \nu-\uinf K\nu\ne0 \}. %\qquad
\end{array}
\end{equation}
Then with $|\cdot|$ denoting the Lebesgue measure of a set,
from \eqref{eq:LHnu} we infer that
\begin{equation}\label{eq:Snu_formula1}
\begin{array}{ll}
    \displaystyle 
    \Snu[\nu] =  \int_{-L}^L \left(-\frac12 (\ma+ K\nu) \hat w\,^2\right)  dx  
    &\text{ if $|N_\nu|=0$ and $|Z_\nu| =0$,}
    \\[10pt]
\Snu[\nu] =  -\infty  &\text{ if $|N_\nu|>0$ or $|Z_\nu|>0$.}
\end{array}
\end{equation}

At a state $\nu$ subject to the strict convexity condition 
\begin{equation}\label{c:s_convexity}
    \mathscr{C}_s : \quad \mathsf{a} + K\nu >0 ,
\end{equation}
we have the formula
\begin{equation}\label{eq:Snu_finite}
    \Snu[\nu] = \int_{-L}^L \left(-\frac12 \frac{(\ma\bar w - \nu-\uinf K\nu)^2}{\ma+K\nu}\right)dx\,.
\end{equation}
Let us compute (formally) the first variation of $\Snu$ in a direction $\delta\nu$ in this case.
We find
\begin{align}
    \delta^{(1)}\Snu[\nu;\delta\nu] &= 
   \int_{-L}^L \left(\hat w(I+\uinf K)\delta\nu + \frac{\hat w\,^2}{2}(K\,\delta\nu) \right) dx
   \nonumber \\ &=
    \int_{-L}^L \delta\nu \left((I + \uinf K)\hat w +\frac12 K(\hat w\,^2) \right)dx,
    \label{eq:calS_var1}
\end{align}
due to the self-adjointness property \eqref{eq:Kadjoint} of the convolution operator $K$.  
We find that for a maximizer of $\Snu[\nu]$ that satisfies the strict convexity condition~\eqref{c:s_convexity},
{\em necessarily equation~\eqref{eq:int_wform} holds.}

We point out that here we have a consistency property similar to that in the previous subsection.
Namely, if $\bar w$ happens to be a ($2L$-periodic) solution to \eqref{eq:int_wform}, then 
with $\nu=0$ we have $\hat w=\bar w$ and the first variation vanishes in \eqref{eq:calS_var1}.
By concavity, $\nu=0$ is a then a maximizer of $\Snu$, regardless of any degeneracies as will be discussed 
in Section~\eqref{sec:analysis} below.

We remark that the functional $\Snu$ enjoys a scaling symmetry in terms of the amplitude parameter $\ma$
just like the one for $S$ previously described.
Thus $\ma$ can be chosen at will for convenience in numerical computations.
Also we mention that although the base state $\bar f$ from the previous subsection
formally corresponds to $\uinf + \bar w$ here, we may expect some differences on a bounded interval,
because $\bar w$ is considered here to be $2L$-periodic outside $\Omega=(-L,L)$,
while $\bar f$ need not be defined there.  Thus, e.g., we have no reason to expect that 
$\hat f = \uinf+\hat w$ for respective maximizers of $\tilde S[\lambda]$ and $\Snu[\nu]$ when 
$\bar f = \uinf + \bar w$ on $\Omega$.

\section{Analysis of concave maximization problems}\label{sec:analysis}

In principle, by defining the pre-dual functional and just algebraically eliminating the primal 
field by substituting in the DtP mapping,
one obtains a dual functional whose critical points should provide solutions to the primal equation,
as long as the denominator in the DtP mapping is non-vanishing.  But for the purposes of analysis, 
it is natural to study the dual function defined by minimization of the pre-dual over 
primal fields, as we have described.  In this section we develop several analytical facts
about the resulting concave maximization problem, for both the DDE and NIE formulations.

\subsection{Analysis for the dual DDE formulation}\label{ss:dualDDE}
We study the DDE formulation specified on a bounded interval with homogeneous boundary condition on dual fields.
Given $0<L<\infty$, let 
\begin{equation}
\Omega = (-L,L), \qquad \Omega_1=(-L-1,L+1).
\end{equation}
Consider the functional $\tilde{S}$ from \eqref{eq:sup_inf} and \eqref{eq:tilde_S} for dual fields restricted to 
lie the Hilbert space of functions $\lambda\in H^1_0(\Omega)$
considered as equal to zero outside $\Omega$.
We can take the norm on this space to be 
\[
\|\lambda\|_{H^1_0(\Omega)} = \|\lambda'\|_{L^2(\Omega)} \,.
\]
 Any such $\lambda \in C^0(\Omega)$, and satisfies the Poincare inequality. 
 Below, we will use the notation $\| \cdot \|$ to denote either $\| \cdot \|_{L^2(\Omega)}$ or
 $\|\cdot\|_{L^2(\Omega_1)}$ as appropriate in context.
 The pre-dual functional $\widehat{S}_L$ is affine in $\lambda$ and is evidently continuous in $\lambda$ for each $f \in L^2(\Omega_1)$. Then the infimum over $f$ renders $\tilde{S}$ an upper semicontinuous functional posed on $H^1_0(\Omega)$ . 
 We summarize the analytic properties of $\tilde S$ in the following result.
 \begin{proposition}\label{prop:tildeS} Let $0<L<\infty$ and let $\bar f\in L^2(\Omega_1)$.
    The functional $\tilde S$ defined by \eqref{eq:sup_inf} is given by \eqref{eq:tilde_S} and maps
    $H^1_0(\Omega)$ into $[-\infty,\half\ma\|\bar f\|^2]$. 
    Moreover, $\tilde S$ is concave and upper semicontinuous.
    The interior of its domain $\dom(\tilde S)=\{\lambda\in H^1_0(\Omega): \tilde S[\lambda]>-\infty\}$
    is the set 
    \[
    \interior\dom\tilde S = \{\lambda\in H^1_0(\Omega): 
    \Delta_\lambda=\ma + \lambda_{x-1}-\lambda_{x+1}>0 \text{ on }\Omega_1\}. 
    \]
    Furthermore, $-\tilde S$ is coercive, and $\tilde S$ achieves a maximum.
    If some maximizer $\lambda$ lies in $\interior\dom\tilde S$, then the function defined on $\Omega_1$ by 
     \[
     \hat f(x) = 
     \frac{\ma \bar f + \lambda'}{\ma + \lambda_{x-1}-\lambda_{x+1}}  
     \]
     is absolutely continuous inside $\Omega$ and is a strong solution of \eqref{eq:primal} there. 
 \end{proposition}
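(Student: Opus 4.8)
The plan is to build the proof in four stages, establishing the elementary structural properties first and reserving the Euler--Lagrange regularity conclusion, which is the substantive part, for last. Throughout I write $\Delta_\lambda=\ma+\lambda_{x-1}-\lambda_{x+1}$ as in \eqref{eq:denominator} and $\hat f=(\ma\bar f+\lambda')/\Delta_\lambda$.

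\textbf{Stage 1 (pointwise reduction and structural properties).} First I would justify formula \eqref{eq:tilde_S} by interchanging the infimum over $f$ with the integral in \eqref{eq:sup_inf}. Since $L^2(\Omega_1)$ is decomposable and the integrand $\mathcal{L}_H(f,\mathcal{D},x)$ is a normal integrand (measurable in $x$, continuous in $f$), Rockafellar's interchange theorem gives $\tilde S[\lambda]=\int_{\Omega_1}\inf_{s\in\R}\mathcal{L}_H(s,\mathcal{D},x)\,dx$. Inspecting the scalar quadratic $s\mapsto \tfrac{s^2}2\Delta_\lambda-s(\lambda'+\ma\bar f)+\tfrac{\ma}2\bar f^2$ pointwise yields the three cases of \eqref{eq:tilde_S}: a unique minimizer giving the integrand of $S_L$ where $\Delta_\lambda>0$; the same constant value where $\Delta_\lambda=0$ and $\lambda'+\ma\bar f=0$; and $-\infty$ otherwise. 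The upper bound is immediate, since taking $f\equiv0$ in \eqref{eq:sup_inf} gives $\tilde S[\lambda]\le\tfrac{\ma}2\|\bar f\|^2$. Concavity and upper semicontinuity then follow for free: $\tilde S$ is the pointwise infimum of the family $\lambda\mapsto\widehat S_L[f,\lambda]$, each member of which is affine and, for fixed $f\in L^2(\Omega_1)$, continuous on $H^1_0(\Omega)$, and an infimum of continuous affine functions is concave and upper semicontinuous.

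\textbf{Stage 2 (interior of the domain).} Using the embedding $H^1_0(\Omega)\hookrightarrow C^0$, the map $\lambda\mapsto\Delta_\lambda$ is continuous into $C^0(\overline{\Omega_1})$, and $\Delta_\lambda\equiv\ma$ near the endpoints of $\Omega_1$ because $\lambda$ vanishes outside $\Omega$. If $\Delta_\lambda>0$ on $\Omega_1$ then $\min_{\overline{\Omega_1}}\Delta_\lambda=:\delta>0$, and a small $H^1_0$ perturbation changes $\Delta_\lambda$ uniformly little, so $U=\{\lambda:\Delta_\lambda>0\text{ on }\Omega_1\}$ is open; since $U\subset\dom\tilde S$ by \eqref{eq:tilde_S}, we get $U\subset\interior\dom\tilde S$. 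For the reverse inclusion I would argue that if $\lambda\in\dom\tilde S$ has $\Delta_\lambda(x_0)=0$ for some $x_0$, then $\lambda(x_0+1)-\lambda(x_0-1)=\ma\ne0$, so at least one of $x_0\pm1$ lies in $\Omega$; choosing a bump $\eta\in H^1_0(\Omega)$ supported near that point I can arrange $\eta_{x_0-1}-\eta_{x_0+1}<0$, so that $\Delta_{\lambda+t\eta}<0$ on a neighborhood of $x_0$ for every $t>0$, forcing $\tilde S[\lambda+t\eta]=-\infty$. Hence no such $\lambda$ is interior, and $\interior\dom\tilde S=U$.

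\textbf{Stage 3 (coercivity and existence).} Using the representation $\tilde S[\lambda]=\tfrac{\ma}2\|\bar f\|^2-\tfrac12\int_{\Omega_1}(\lambda'+\ma\bar f)^2/\Delta_\lambda\,dx$ from \eqref{eq:S_rlp}, I bound $\Delta_\lambda\le\ma+2\|\lambda\|_{C^0}\le\ma+C\|\lambda\|_{H^1_0}$ by Poincar\'e/Sobolev, so the integral is at least $\|\lambda'+\ma\bar f\|^2/(\ma+C\|\lambda\|_{H^1_0})\gtrsim\|\lambda\|_{H^1_0}^2/(\ma+C\|\lambda\|_{H^1_0})\to\infty$ as $\|\lambda\|_{H^1_0}\to\infty$; thus $-\tilde S$ is coercive. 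Since $\tilde S[0]=0$ and $\tilde S\le\tfrac{\ma}2\|\bar f\|^2$, the supremum is finite. A maximizing sequence is bounded by coercivity, hence has a weakly convergent subsequence; because $\tilde S$ is concave with strongly closed convex superlevel sets, these are weakly closed (Mazur), so $\tilde S$ is weakly upper semicontinuous and the weak limit is a maximizer.

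\textbf{Stage 4 (Euler--Lagrange and regularity --- the main step).} Now suppose a maximizer $\lambda\in\interior\dom\tilde S$, so $\Delta_\lambda\ge\delta>0$ on $\Omega_1$ and $\hat f\in L^2(\Omega_1)$. On the open set $U$ one has $\tilde S=S_L$, which is finite and Gateaux differentiable there (differentiation under the integral being justified by the uniform bound $\Delta_\lambda\ge\delta$), so the first variation must vanish in every direction: by \eqref{eq:second_form_DDE}, $\int_{\Omega_1}\bigl(-\delta\lambda'\,\hat f+\tfrac{\hat f^2}2(\delta\lambda_{x-1}-\delta\lambda_{x+1})\bigr)\,dx=0$ for all $\delta\lambda\in H^1_0(\Omega)$. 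Applying the shift identities \eqref{eq:non_local_shift} to the last two terms (legitimate since $\delta\lambda$ has compact support in $\Omega$) recasts this as $\int_\Omega\hat f\,\delta\lambda'\,dx=\int_\Omega g\,\delta\lambda\,dx$ with $g=\tfrac12(\hat f_{x+1}^2-\hat f_{x-1}^2)\in L^1(\Omega)$. Testing against all $\delta\lambda\in C_c^\infty(\Omega)$, this is exactly the assertion that $\hat f$ has weak derivative $-g$ on $\Omega$. A function in $L^2(\Omega)$ whose weak derivative lies in $L^1(\Omega)$ belongs to $W^{1,1}(\Omega)$ and hence, in one dimension, agrees a.e.\ with an absolutely continuous representative, whose pointwise a.e.\ derivative satisfies $\hat f'+\tfrac12(\hat f_{x+1}^2-\hat f_{x-1}^2)=0$, i.e.\ \eqref{eq:primal}/\eqref{eq:primal_in_dual}; a bootstrap then shows $g$ is continuous and $\hat f\in C^1(\Omega)$ solves the equation classically. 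The main obstacle is precisely this stage: one must extract differentiability of $\hat f$ --- known a priori only to be $L^2$ --- purely from the vanishing first variation, and the device that makes it work is keeping the derivative on the test function $\delta\lambda$, so that the Euler--Lagrange identity directly exhibits the weak derivative of $\hat f$.
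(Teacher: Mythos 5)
Your proposal is correct and follows essentially the same route as the paper's proof: pointwise minimization under the integral to obtain \eqref{eq:tilde_S} and the bound $\tilde S\le\tfrac{\ma}2\|\bar f\|^2$, concavity and upper semicontinuity as an infimum of continuous affine functionals, the same perturbation argument characterizing $\interior\dom\tilde S$, the same coercivity estimate via $\Delta_\lambda\le\ma+2\sup_\Omega|\lambda|\le\ma+C\|\lambda\|_{H^1_0(\Omega)}$ (with the set where $\Delta_\lambda$ and $\lambda'+\ma\bar f$ both vanish handled identically), and the identical shift-to-the-test-function derivation of the vanishing first variation, yielding $\hat f\in W^{1,1}(\Omega)$ and hence absolute continuity and a strong solution of \eqref{eq:primal} on $\Omega$. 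One caveat on your closing bootstrap, which goes beyond the proposition: the claim $\hat f\in C^1(\Omega)$ is too strong in general, since for $x\in(L-1,L)$ the term $\hat f_{x+1}$ samples $\hat f$ on $\Omega_1\setminus\Omega$, where $\hat f$ may be discontinuous (e.g.\ from a jump of $\lambda'$ at $\pm L$), so—as the paper's remark after the proof notes—one only gets $C^1$ regularity inside $(-L+1,L-1)$ (and away from the points $\pm(L-k)$ for higher derivatives); this does not affect the proposition's stated conclusions.
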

This result gives an existence result for some (weak) solution of the primal equation \eqref{eq:primal}
on $\Omega$ satisfying homogeneous boundary conditions outside this interval, 
{\em conditional} on having $\tilde S$ admit some maximizer at a state $\lambda$ where
the convexity condition \eqref{eq:convexity_cond} holds strictly inside $\Omega$.
 \begin{proof}
 Evidently  taking the infimum over $f$ in \eqref{eq:sup_inf} always yields 
 $\tilde S(\lambda)\le  \hat S_L(0,\lambda) = \half\ma\|\bar f\|^2$. 
 Clearly $\hat S(f,\lambda)$ depends continuously on $\lambda\in H^1_0(\Omega)$, 
 so the concavity and upper semicontinuity follow 
by basic results in convex analysis, see \cite[Chap.~1]{Brezis2011} or \cite[Prop.~9.2]{BauschkeCombettes2017}.

Regarding the interior of the domain,
if $\Delta_\lambda>0$ on $\Omega_1$ then it is bounded below there 
(it is continuous and approaches $\ma$ at the boundary) and clearly $\tilde S[\lambda]$
is finite in a neighborhood of $\lambda$ in $H^1_0(\Omega)$. And conversely if $\Delta_\lambda=0$
at some point in $\Omega_1$ then a small perturbation of $\lambda$ can make it negative, which makes
$\tilde S$ infinite.
 
Next we prove coercivity.
Given  $\lambda$ at which $\tilde{S}[\lambda]$ 
in \eqref{eq:tilde_S} is finite and using \eqref{eq:dtp}-\eqref{eq:functional_1},
we find
\begin{align} \label{eq:tildeS_finite}
    \tilde{S}[\lambda] = 
     \frac{\ma}2 \|\bar{f}\|^2
     - \half \int_{\Omega_1 {\backslash \Omega_1^*}}  \frac{(\lambda' + \mathsf{a} \bar{f})^2}{\mathsf{a} + \lambda_{x - 1} - \lambda_{x+1}}\, dx, 
\end{align}
where $\Omega_1^*$ is the subset of $\Omega_1$ where $\Delta_\lambda$ and $\lambda' + \mathsf{a} \bar{f}$ both vanish. 
Define
\[
\lambda_m = \sup_{x \in \Omega}  |\lambda(x)|\,,
\]
and note that
$0 \leq \mathsf{a} + \lambda_{x-1} - \lambda_{x+1} \leq \mathsf{a} + 2 \lambda_m$
so that 
\[
\frac{ - (\lambda' + \mathsf{a} \bar{f})^2 } {\mathsf{a} + \lambda_{x-1} - \lambda_{x+1}} \leq  \frac{- (\lambda' + \mathsf{a} \bar{f})^2 } {\mathsf{a} + 2 \lambda_m}\,,
\]
{and that $\lambda' + \mathsf{a} \bar{f} = 0$ on $\Omega_1^*$}. Then
\begin{align*}
    \left(\tilde{S}[\lambda] - \frac{\ma}2 \|\bar{f}\|^2\right)\cdot2(\ma+2\lambda_m) & 
    \leq  
    - \int_{\Omega_1 \backslash \Omega_1^*} (\lambda' + \mathsf{a} \bar{f})^2\, dx = 
    - \int_{\Omega_1} (\lambda' + \mathsf{a} \bar{f})^2\, dx  \\
     &\leq  \int_\Omega \left(-(\lambda')^2 -2\ma\bar f \lambda'\right) dx 
      \leq 2\ma \|\bar f\|\|\lambda'\| - \|\lambda'\|^2  \,.
\end{align*}
Since $\|\lambda'\| = \|\lambda\|_{H^1_0(\Omega)}$ it follows that $-\tilde S$ is coercive on $H^1_0(\Omega)$, 
which means that
\[
\tilde{S}[\lambda] \to - \infty \quad \mbox{ as } \quad \| \lambda \|_{H^1_0(\Omega)} \to \infty.
\]

A maximizer of $\tilde S$ therefore exists, because $-\tilde S$ is proper (i.e., finite somewhere)
and a standard convex analysis result states that a proper, convex and lower semicontinuous function 
that is coercive has a minimizer.  (E.g., see \cite[Thm.~11.10]{BauschkeCombettes2017}.)

Since $\Delta_\lambda>0$ for each $\lambda\in \interior\dom\tilde S$, it is clear from 
\eqref{eq:tildeS_finite} (with $\Omega_*$ empty) that $\tilde S$ is Frech\`et differentiable 
in a neigborhood, with (directional) derivative given by the first variation. 
Let us next compute this first variation at such a state. 
Recall that $\lambda$ and its variations $\delta\lambda$ 
{are taken to vanish} outside the interval $\Omega=(-L,L)$.
Then $\lambda_{x-1}$ vanishes for $x<-L+1$, so 
\begin{align*}
   \int_{\Omega_1}
   \hat f^2\lambda_{x-1}\,dx 
   = \int_{-L+1}^{L+1}
   \hat f^2\lambda_{x-1}\,dx 
   = \int_\Omega
   \hat f_{x+1}^2\lambda\,dx ,
\end{align*}
and similarly for integrands with $\hat f^2\lambda_{x+1}$ and with $\lambda$ replaced by $\delta\lambda$.
We find therefore that, like for the computation of the variation on the whole line, 
\begin{align}\label{e:del1SL}
    \del^{(1)}S_L[\lambda;\del \lambda] 
    &= \int_{-L}^{L} \left( -\del \lambda' \,\hat{f} + \del\lambda\cdot\frac{1}{2} (\hat{f}\,^2_{x+1} - \hat{f}\,^2_{x-1}) \right) dx.
\end{align}
Thus, at a maximizer of $\tilde S$ where $\Delta_\lambda>0$ in $\Omega_1$, we can infer that 
$\hat f\in L^2(\Omega_1)$ and is a weak solution of the primal equation \eqref{eq:primal} on the interval $\Omega=(-L,L)$.
Since the functions $\hat f_{x\pm 1}$ are square integrable in $\Omega$, 
equation \eqref{eq:primal} holds strongly in $L^1(\Omega)$ and 
(by integration) it follows $\hat f$ is absolutely continuous on $\Omega$.
 \end{proof}

{\bf Remark.} From \eqref{eq:primal}, inductively we can infer higher regularity on smaller sets:
$\hat f$ is $C^1$ inside the interval $(-L+1,L-1)$, $C^2$ inside the interval $(-L+2,L-2)$, etc. 
Even if $\bar f$ is smooth, however, $\hat f$ may be discontinuous at $\pm L$ due to a potential
discontinuity in $\lambda'$ at the endpoints of $\Omega$. In this case we could infer that
$\hat f$ is $C^1$ throughout $\Omega$ except at $\pm(L-1)$, and further that 
$\hat f$ is smooth in $\Omega$ outside the set of points $\pm(L-k)$ where $k\in \N$.

\subsection{Analysis for the dual NIE formulation}\label{ss:NIEanalysis}

In this subsection we study the NIE formulation on both finite and infinite intervals.
Using this formulation we will obtain conditional existence results for $C^\infty$ periodic solutions
of the primal equation \eqref{eq:int_wform}, {instead of imposing homogeneous boundary conditions outside 
a bounded interval}.  The price is that the coercivity analysis turns out to be more involved, and 
we do not establish coercivity for all values of the parameter $\uinf$.

Throughout this section, we fix $\ma>0$ and $\bar u\in L^2(\Omega)$.
For any $L\in(0,\infty]$, let $\Omega=(-L,L)$.  
If $L=\infty$, the formula \eqref{d:Knu} defines $K$ as a bounded linear operator
from $L^2(\Omega)$ into $H^1(\Omega)$. The same holds if $0<L<\infty$, by
extending $\nu\in L^2(\Omega)$ to be $2L$-periodic and considering $K\nu$ 
as the restriction of \eqref{d:Knu} to $\Omega$.  For later use we also define
$K_0\nu$ to be given by extending $\nu$ to be zero outside $\Omega$
and restricting formula \eqref{d:Knu} to $\Omega$.

We summarize several properties of the dual functional $\Snu$ in the following proposition.
Note that for any $\nu$ and $w$ in $L^2(\Omega)$,
the pre-dual integrand $\mathfrak{L_H}$ in \eqref{d:LHnu} is integrable on $\Omega$,
which is evident since $K\nu$ is in $L^2(\Omega)$ and is continuous and bounded. 

\begin{proposition} \label{prop:Snu_def}
    The functional $\Snu$ defined by \eqref{d:Snu1}, i.e., by 
    \[
    \Snu[\nu] = \inf_{w\in L^2(\Omega)} \int_{-L}^L \mathfrak{L_H}(w,\nu,x)\,dx,
    \]
    is given by \eqref{eq:Snu_formula1} and maps $L^2(\Omega)$ into $[-\infty,0]$. 
    Moreover, $\Snu$ is concave and upper semicontinuous. 
    The interior of its domain $\dom\Snu=\{\nu\in L^2(\Omega):\Snu[\nu]>-\infty\}$ 
    is the set where \eqref{c:s_convexity} holds, i.e.,
    \[
    \interior\dom\Snu = \{\nu\in L^2(\Omega): \ma+K\nu>0\text{ on $\Omega$}\}.
    \]
    If additionally $-\Snu$ is coercive, then a maximizer of $\Snu$ exists.
    Also, if $\Snu$ has some maximizer $\nu\in\interior\dom\Snu$, then the function $\hat w$ given by \eqref{eq:DtPnu}
    is a $C^\infty$ solution of \eqref{eq:int_wform}.
\end{proposition}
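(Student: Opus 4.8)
The plan is to follow the template established in the proof of Proposition~\ref{prop:tildeS}, exploiting that $\hat\Snu[w,\nu]$ is affine in $\nu$ (the terms $\nu$ and $K\nu$ enter $\mathfrak{L_H}$ linearly), so that $\Snu=\inf_w\hat\Snu[\,\cdot\,,\nu]$ is automatically concave and upper semicontinuous as a pointwise infimum of continuous affine functionals of $\nu$. First I would record the pointwise minimization \eqref{eq:LHnu}: since $\mathfrak{L_H}(\,\cdot\,,\nu,x)$ is a scalar quadratic in $w$ with leading coefficient $\frac12(\ma+K\nu(x))$, the integral is driven to $-\infty$ precisely when $\ma+K\nu<0$ on a positive-measure set, or when $\ma+K\nu=0$ there without the numerator $\ma\bar w-\nu-\uinf K\nu$ also vanishing; otherwise the pointwise minimizer $\hat w$ of \eqref{eq:DtPnu} is a measurable rational expression lying in $L^2(\Omega)$ (the denominator being bounded below and the numerator in $L^2$), so the infimum is realized and equals \eqref{eq:Snu_formula1}. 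Taking $w=0$ gives $\hat\Snu[0,\nu]=0$, whence $\Snu[\nu]\le0$, pinning the range in $[-\infty,0]$.

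For the characterization of $\interior\dom\Snu$, the key fact is that $K$ maps $L^2(\Omega)$ continuously into $C^0$: from \eqref{d:Knu}, $|K\nu(x)|\le\sqrt2\,\|\nu\|_{L^2}$ by Cauchy--Schwarz, and $K\nu$ is continuous. If $\ma+K\nu>0$ throughout $\Omega$, then---because for $L<\infty$ periodicity makes $\ma+K\nu$ a continuous function on the compact circle $\R/2L\Z$, while for $L=\infty$ one has $K\nu(x)\to0$ as $|x|\to\infty$---the function $\ma+K\nu$ admits a uniform positive lower bound $\delta>0$. Since $\|K(\nu'-\nu)\|_\infty\le\sqrt2\,\|\nu'-\nu\|_{L^2}$, every $\nu'$ in a small $L^2$-ball around $\nu$ still obeys $\ma+K\nu'>0$, so $\Snu$ is finite there and $\nu\in\interior\dom\Snu$. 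Conversely, if $\ma+K\nu(x_0)\le0$ at some $x_0\in\Omega$, I would perturb by $\eta\in L^2$ chosen so that $K\eta<0$ near $x_0$ (e.g.\ a negative bump supported near $x_0$); for arbitrarily small $t>0$ the quantity $\ma+K(\nu+t\eta)$ is negative on a neighborhood of $x_0$, forcing $\Snu[\nu+t\eta]=-\infty$, so $\nu\notin\interior\dom\Snu$.

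Existence of a maximizer then follows, granted coercivity of $-\Snu$, from the same convex-analysis result used for $\tilde S$: $\Snu$ is proper (it is finite at $\nu=0$, where $\ma+K\nu=\ma>0$), concave, and upper semicontinuous, so a proper convex lower semicontinuous coercive $-\Snu$ attains its minimum (cf.\ \cite[Thm.~11.10]{BauschkeCombettes2017}). At a maximizer $\nu\in\interior\dom\Snu$ the strict convexity condition \eqref{c:s_convexity} gives a uniform lower bound on the denominator, so the quotient integrand of \eqref{eq:Snu_finite} depends smoothly on $\nu$ with controlled derivatives; I would justify differentiation under the integral to conclude that $\Snu$ is Fr\'echet differentiable near $\nu$, with derivative equal to the first variation \eqref{eq:calS_var1} (using the self-adjointness \eqref{eq:Kadjoint} of $K$). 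Stationarity at the maximizer then yields $(I+\uinf K)\hat w+\frac12 K(\hat w^2)=0$ for all test directions $\delta\nu$, i.e.\ $\hat w$ solves \eqref{eq:int_wform}.

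The regularity claim is where the NIE formulation pays off, and I would obtain $C^\infty$ smoothness by bootstrapping directly on the integral equation rather than on the formula \eqref{eq:DtPnu}. Writing $\hat w=-\uinf K\hat w-\frac12 K(\hat w^2)$, observe from \eqref{d:Knu} that $(K\phi)'(x)=\phi(x+1)-\phi(x-1)$, so $K$ raises regularity by one derivative: if $\hat w\in C^k$ then $\hat w^2\in C^k$ and both $K\hat w,K(\hat w^2)\in C^{k+1}$, whence the right-hand side, and therefore $\hat w$, lies in $C^{k+1}$. Starting from $\hat w\in L^2$ (so $\hat w^2\in L^1$ and $K\hat w,K(\hat w^2)$ are already continuous), induction upgrades $\hat w$ to $C^\infty$. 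I expect the main obstacle to be the rigorous first-variation step---controlling the quotient integrand of \eqref{eq:Snu_finite} uniformly under $L^2$ perturbations, once the denominator is bounded below, so as to promote the formal computation of \eqref{eq:calS_var1} to a genuine Fr\'echet derivative; the concavity, upper semicontinuity, domain, and bootstrap steps are then routine. Note that coercivity itself is deliberately retained as a hypothesis, consistent with the earlier remark that it need not hold for all values of $\uinf$.
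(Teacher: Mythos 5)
Your proposal is correct and takes essentially the same route as the paper's proof: concavity and upper semicontinuity from the pointwise infimum of affine functionals of $\nu$, the characterization of $\interior\dom\Snu$ via continuity of $K\nu$ and the bound $\|K\nu\|_\infty \le \sqrt{2}\,\|\nu\|_{L^2}$ together with a local negative perturbation at a touching point, properness at $\nu=0$ plus the standard convex-analysis existence result, and vanishing first variation at an interior maximizer. Note that your framing of the smoothness step as a departure is not really one --- the paper likewise bootstraps on the integral equation \eqref{eq:int_wform}, using the formula \eqref{eq:DtPnu} only to place $\hat w$ in $L^2(\Omega)$ initially.
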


\begin{proof}
   Since $\mathfrak{L_H}(0,\nu,x)=0$, the infimum defining $\Snu[\nu]$ is non-positive. 
   Since  $\hat\Snu(w,\nu)$ is continuous and affine in $\nu$,
   the convexity and lower semicontinuity of $-\Snu$ follow as in the previous subsection.

From \eqref{eq:LHnu}--\eqref{eq:DtPnu}, evidently if \eqref{c:s_convexity} holds 
then $\Snu[\nu]$ is finite and remains finite in a neighborhood of $\nu$ in $L^2(\Omega)$, 
so $\nu\in\interior\dom\Snu$. On the other hand, if $\Snu[\nu]$ is finite but \eqref{c:s_convexity}
does not hold, then $\ma+K\nu(x)=0$ for some $x\in\Omega$. For any nearby $\tilde\nu<\nu$ locally near $x$,
we get $\ma+K\tilde\nu(x)<0$, whence $\Snu[\tilde\nu]=-\infty$ so $\nu\notin\interior\dom\Snu$.

We note that $\hat w = \bar w$ when $\nu=0$, so $\Snu[0]=-\frac12{\ma}\|\bar w\|_{L^2(\Omega)}^2$.
Then $-\Snu$ is proper, convex and lower semicontinuous, hence $\Snu$ has a maximizer.

Finally, if $\Snu$ has a maximizer $\nu$ in the interior of its domain, then $\Snu$ is differentiable 
at $\nu$ and the first variation must vanish, implying that $\hat w$ solves \eqref{eq:int_wform} as shown in
section~\ref{sec:NIEform} above. We claim $\hat w$ is $C^\infty$.
The function $K\nu$ is in $H^1(\Omega)$, so is continuous (and vanishes in the limit $x\to\pm\infty$ if $L=\infty$),  and $\mathsf{a}+K\nu$ has a positive minimum and bounded maximum:
\begin{equation}\label{b:betabounds}
 0<\mathsf{a}_{\rm min} \le  \mathsf{a}+K\nu \le \mathsf{a}_{\rm max}<\infty.
\end{equation}
The function $\hat w$ given by \eqref{eq:DtPnu} is then in $L^2(\Omega)$. 
As $K\hat w$ and $K(\hat w\,^2)$ are absolutely continuous and $\hat w$
satisfies \eqref{eq:int_wform},  
$\hat w$ is absolutely continuous also, 
whence by bootstrapping (induction) we find $\hat w$ is $C^k$ for all $k$.
\end{proof}

Recall that to say $-\Snu$ is {coercive} means that 
$-\Snu[\nu]\to\infty$ as $\|\nu\|_{L^2(\Omega)}\to\infty$.
In most circumstances we can prove the function $-\Snu$ is indeed coercive.
This is easiest when the bounded operator $I+\uinf K$ on $L^2(\Omega)$ has bounded inverse,
which is evidently the case when $|\uinf|$ is small enough, for example.
\begin{proposition}
    \label{prop:coercive}
    Suppose $I+\uinf K$ has bounded inverse on $L^2(\Omega)$.  Then $-\Snu$ is coercive.
\end{proposition}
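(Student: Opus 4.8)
The plan is to prove $-\Snu[\nu]\to\infty$ as $\|\nu\|_{L^2(\Omega)}\to\infty$ by exploiting a mismatch in growth rates: under the hypothesis, the squared numerator in \eqref{eq:Snu_finite} grows \emph{quadratically} in $\|\nu\|_{L^2(\Omega)}$, whereas the denominator $\ma+K\nu$ grows at most \emph{linearly}. Points $\nu\notin\dom\Snu$ already give $-\Snu[\nu]=+\infty$ by \eqref{eq:Snu_formula1}, so they require no attention; it therefore suffices to establish a diverging lower bound for $\nu$ in the domain, i.e.\ with $\ma+K\nu\ge0$ a.e.\ and $|Z_\nu|=0$.

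First, I would isolate the numerator. Set $g:=\ma\bar w-(I+\uinf K)\nu$, so that the integrand of \eqref{eq:Snu_finite} is $-\tfrac12 g^2/(\ma+K\nu)$. Writing $B:=(I+\uinf K)^{-1}$, which is bounded by hypothesis, the identity $(I+\uinf K)\nu=\ma\bar w-g$ gives $\nu=B(\ma\bar w-g)$, whence
\[
\|\nu\|_{L^2(\Omega)}\le\|B\|\big(\ma\|\bar w\|_{L^2(\Omega)}+\|g\|_{L^2(\Omega)}\big).
\]
Rearranging yields the linear lower bound $\|g\|_{L^2(\Omega)}\ge\|B\|^{-1}\|\nu\|_{L^2(\Omega)}-\ma\|\bar w\|_{L^2(\Omega)}$, so in particular $\|g\|_{L^2(\Omega)}$ grows at least linearly in $\|\nu\|_{L^2(\Omega)}$.

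Second, I would bound the denominator pointwise. From \eqref{d:Knu} we have $K\nu(x)=\int_{x-1}^{x+1}\nu(y)\,dy$, and Cauchy--Schwarz over the length-$2$ window gives $|K\nu(x)|\le C_L\,\|\nu\|_{L^2(\Omega)}$, where $C_L$ depends only on $L$: for $L=\infty$ or $L\ge1$ one may take $C_L=\sqrt2$, while for $L<1$ a slightly larger constant accounts for the wraparound of the $2L$-periodic extension. Hence $\ma+K\nu\le\ma+C_L\|\nu\|_{L^2(\Omega)}$ on $\Omega$. Combining the two steps: on the set $\{\ma+K\nu=0\}$ we have $g=0$ (this is exactly $|Z_\nu|=0$), so that set contributes nothing to $\int g^2$, while everywhere else $0<\ma+K\nu\le\ma+C_L\|\nu\|_{L^2(\Omega)}$. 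Therefore, using \eqref{eq:Snu_finite} with the convention that the degenerate set contributes zero,
\[
-\Snu[\nu]=\frac12\int_{-L}^L\frac{g^2}{\ma+K\nu}\,dx\ge\frac{\|g\|_{L^2(\Omega)}^2}{2\big(\ma+C_L\|\nu\|_{L^2(\Omega)}\big)}.
\]
Inserting the linear lower bound on $\|g\|_{L^2(\Omega)}$, the right-hand side is, for $\|\nu\|_{L^2(\Omega)}$ large, bounded below by an expression of the form $(\text{const}\cdot\|\nu\|_{L^2(\Omega)}^2)/(\ma+C_L\|\nu\|_{L^2(\Omega)})$, which diverges as $\|\nu\|_{L^2(\Omega)}\to\infty$. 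This gives coercivity of $-\Snu$.

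I expect the only delicate bookkeeping to be the treatment of the degenerate set $\{\ma+K\nu=0\}$ --- namely, checking that the formula \eqref{eq:Snu_finite} and the quotient estimate remain legitimate there by invoking $g=0$ (from $|Z_\nu|=0$) --- together with the mechanical matter of pinning down the $L$-dependence of $C_L$ when $L<1$ so that the pointwise estimate on $K\nu$ is uniform. Neither is a genuine obstacle; the essential content is simply the observation that bounded invertibility of $I+\uinf K$ forces the numerator to outgrow the denominator. It is also worth noting, as a sanity check, that this argument explains precisely why coercivity may fail when $I+\uinf K$ is not boundedly invertible: then $\|g\|_{L^2(\Omega)}$ need not grow with $\|\nu\|_{L^2(\Omega)}$, and the quadratic-over-linear mechanism breaks down.
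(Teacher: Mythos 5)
Your proposal is correct and follows essentially the same route as the paper's proof: a linear lower bound on $\|\ma\bar w-(I+\uinf K)\nu\|_{L^2(\Omega)}$ from bounded invertibility, the Cauchy--Schwarz bound $|K\nu(x)|\le C_K\|\nu\|_{L^2(\Omega)}$ on the denominator, and extension of the integral over the degenerate set using $\ma\bar w-\nu-\uinf K\nu=0$ a.e.\ where $\ma+K\nu=0$. Your phrasing of the first step via $\nu=(I+\uinf K)^{-1}(\ma\bar w-g)$ is only a cosmetic variant of the paper's direct estimate $\|(I+\uinf K)\nu\|\ge\alpha\|\nu\|$.
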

\begin{proof} 
Throughout the proof, $\|\cdot\|$ denotes the norm in $L^2(\Omega)$.
We begin with two preliminary estimates. First, due to the invertibility hypothesis,
there is a constant $\alpha>0$ such that for all $\nu\in L^2(\Omega)$,
\[
\|(I+\uinf K)\nu\| \ge \alpha \|\nu\| \,,
\]
whence 
\[
\|\ma \bar w - (I+\uinf K)\nu\| \ge     
\|(I+\uinf K)\nu\| - \|\ma \bar w\| \ge 
\alpha \|\nu\| - \|\ma \bar w\| \,.
\]
Second, by the Cauchy-Schwarz inequality we find
(recall we extend $\nu$ as $2L$-periodic if needed)
\[
|K\nu(x)| = \left|\int_{x-1}^{x+1} 1\cdot \nu(y)\,dy \right| \le C_K \|\nu\| \,,
\]
where $C_K$ is a constant independent of $\nu$ (and equal to $\sqrt{2}$ if $L>1$).
Therefore 
\[
\ma_{\rm max} = \max_x |\ma +K\nu|\le  \ma + C_K\|\nu\|.  
\]
2. Now, define the set
\begin{equation}
    \label{d:Pnu}
P_\nu = \{x\in\Omega: \ma + K\nu>0\}.
\end{equation}
If $\Snu[\nu]>-\infty$ then by \eqref{eq:Snu_formula1} and the definition 
of $\hat w$ in \eqref{eq:DtPnu},
\[
-2\Snu[\nu] = \int_{P_\nu} \frac{(\ma\bar w-\nu-\uinf K\nu)^2}{\ma+K\nu}\,dx \ge 
\frac1{\ma_{\rm max}} \int_{P_\nu} | \ma \bar w-\nu-\uinf K\nu|^2\,dx\,.
\]
However, because $|N_\nu|=|Z_\nu|=0$, we have $\ma\bar w-\nu-\uinf K\nu=0$ a.e.~on the complement of $P_\nu$. 
Hence, the domain of integration can be extended from $P_\nu$ to all of $\Omega=(-L,L)$.
Thus for all $\nu\in L^2(\Omega)$ we have 
\begin{equation}
    -2\Snu[\nu] \ge 
\frac{\|\ma\bar w-(I+\uinf K)\nu\|^2}{\ma+C_K\|\nu\|}
\ge 
\frac{(\alpha\|\nu\| - \|\ma\bar w\|)^2} {\ma+C_K\|\nu\|} \,.
\end{equation}
The right-hand side tends to $\infty$ as $\|\nu\|\to\infty$, which establishes
the coercivity as claimed.
\end{proof}

We can determine precisely when bounded invertibility holds by locating the 
spectrum of $K$ using the Fourier transform, with the following result.
Define the Fourier transform of $\nu\in L^2(\Omega)$ by 
\[
\calF\nu(\xi) = \int_{-L}^L e^{-i\xi x}\nu(x)\,dx 
\]
on the Fourier domain given by $\xi\in\Omega^*$, where
\begin{equation} \label{d:Omega*}
\Omega^* := \begin{cases}
(-\infty,\infty) & \text{if $L=\infty$,}\\
\{k\pi/L: k\in\Z\} &\text{if $0<L<\infty$}.
\end{cases}
\end{equation}
Then by a straightforward computation,
\[
\calF(K\nu)(\xi) = (2\sinc\xi) \calF\nu(\xi), 
   \quad \text{where \ \ } 
   \sinc(\xi) = \begin{cases}
       \frac{\sin \xi}{\xi} & \xi\ne 0,\\ 1 & \xi=0.
   \end{cases}
\]
It follows that $(I+\uinf K)\nu = w$ if and only if 
\[
(1+2\uinf \sinc \xi) \calF\nu(\xi) = \calF w(\xi) \quad\text{for all $\xi\in\Omega^*$,}
\]
and $I+\uinf K$ has bounded inverse if and only if $(1+2\uinf\sinc\xi)^{-1}$ is
uniformly bounded on $\Omega^*$.

Notice that the range $\{2\sinc\xi:\xi\in\Omega^*\}$ is a closed interval $[-\sigma_0,2]$
with $-\sigma_0\approx -0.434467$ for $L=\infty$, and is a discrete sequence of not-necessarily-distinct
values converging to zero for $0<L<\infty$. Thus we find the following.
\begin{proposition}
   \label{prop:invertible} 
   The operator $I+\uinf K$ has bounded inverse if and only if 
   \[ 1+2\uinf\sinc\xi \ne 0 \qquad\text{for all $\xi\in\Omega^*$.}\]
   In particular $I+\uinf K$ is invertible whenever  
   \[
   -\frac12<\uinf<1/\sigma_0\approx 2.30167.  
   \]
   For $0<L<\infty$ each number $1+2\uinf\sinc(k\pi/L)$ ($k\in\Z$) is an eigenvalue of finite mulitiplicity.
\end{proposition}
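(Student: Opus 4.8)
The plan is to diagonalize $I+\uinf K$ by the Fourier transform and reduce the whole question to a scalar multiplier problem, exactly as prepared in the text immediately preceding the statement. Since $\calF(K\nu)(\xi)=2\sinc\xi\,\calF\nu(\xi)$, the operator $I+\uinf K$ acts on the Fourier side as multiplication by the symbol $m(\xi)=1+2\uinf\sinc\xi$, and the text has already recorded that $(I+\uinf K)\nu=w$ is equivalent to $m(\xi)\calF\nu(\xi)=\calF w(\xi)$ on $\Omega^*$, with bounded invertibility equivalent to uniform boundedness of $1/m$ on $\Omega^*$. Thus everything reduces to the scalar equivalence
\[
\inf_{\xi\in\Omega^*}|m(\xi)|>0 \quad\Longleftrightarrow\quad m(\xi)\ne0 \text{ for all } \xi\in\Omega^*,
\]
after which bounded invertibility is equivalent to the nonvanishing of $m$ on $\Omega^*$, which is precisely the first assertion.

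To prove this scalar equivalence, the forward direction is immediate, and for the reverse I would exploit the decay $\sinc\xi\to0$ as $|\xi|\to\infty$, so that $m(\xi)\to1$. When $L=\infty$ we have $\Omega^*=\R$ and $m$ is continuous, so choosing $R$ with $|m(\xi)|\ge\frac12$ for $|\xi|\ge R$, the infimum of $|m|$ over the compact set $[-R,R]$ is attained and is positive whenever $m$ has no zero there; combining the two regions gives $\inf_{\R}|m|>0$. When $0<L<\infty$ the values $m(k\pi/L)$ form a sequence tending to $1$, so only finitely many lie within $\frac12$ of $0$; if none of these vanishes the minimum over that finite set is positive, whence $\inf_{\Omega^*}|m|>0$. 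Conversely any zero of $m$ forces the infimum to be $0$. This settles the first claim of the proposition.

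For the explicit interval I would use the range $\{2\sinc\xi:\xi\in\Omega^*\}\subseteq[-\sigma_0,2]$ recorded in the text, with the maximum $2$ attained at $\xi=0$ and the minimum $-\sigma_0$ at the first negative lobe of $\sinc$. Writing $m=1+\uinf\,(2\sinc\xi)$ and splitting on the sign of $\uinf$: for $\uinf\ge0$ the smallest value of $m$ is $1-\uinf\sigma_0$, positive exactly when $\uinf<1/\sigma_0$; for $\uinf<0$ the smallest value of $m$ is $1+2\uinf$, attained at $\xi=0$, positive exactly when $\uinf>-\frac12$. In either case $m>0$ throughout $\R\supseteq\Omega^*$ when $-\frac12<\uinf<1/\sigma_0$, so by the first part $I+\uinf K$ has bounded inverse on this interval. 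The eigenvalue remark for $0<L<\infty$ then follows because the exponentials $e^{ik\pi x/L}$ form an orthogonal basis of $L^2(\Omega)$ with $K e^{ik\pi x/L}=2\sinc(k\pi/L)\,e^{ik\pi x/L}$, so each $m(k\pi/L)$ is an eigenvalue of $I+\uinf K$ with the corresponding mode as eigenfunction; since these values accumulate only at $1$, an eigenvalue distinct from $1$ is shared by only finitely many modes and so has finite multiplicity.

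The only step that is not purely formal is the reverse implication in the second paragraph, the passage from nonvanishing of $m$ to uniform positivity of $|m|$: this is exactly where the decay $\sinc\xi\to0$ is essential, since without control at infinity a nonvanishing symbol could still approach $0$, destroying the uniform bound on $1/m$. I expect this to be the only genuine obstacle; everything else is bookkeeping built directly on the Fourier diagonalization already assembled in the excerpt, together with the standard fact that an $L^2$ Fourier multiplier is boundedly invertible precisely when its symbol is bounded away from zero.
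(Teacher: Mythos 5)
Your proposal is correct and follows essentially the same route as the paper, which presents the proposition as an immediate consequence of the Fourier diagonalization $\calF(K\nu)(\xi)=2\sinc\xi\,\calF\nu(\xi)$ and the observation that the range $\{2\sinc\xi:\xi\in\Omega^*\}$ lies in $[-\sigma_0,2]$; your compactness/decay argument merely makes explicit the step the paper leaves implicit, namely that nonvanishing of $m(\xi)=1+2\uinf\sinc\xi$ forces $\inf_{\Omega^*}|m|>0$ because $m(\xi)\to1$ as $|\xi|\to\infty$. One point in your favor: restricting the finite-multiplicity claim to eigenvalues distinct from $1$ is actually sharper than the proposition as stated, since for instance when $L$ is a positive integer (or when $\uinf=0$) the value $1=1+2\uinf\sinc(k\pi/L)$ recurs for infinitely many $k$ and the eigenvalue $1$ then has infinite multiplicity.
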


{\bf Remark.}
The condition in proposition~\ref{prop:invertible} has a physical meaning. 
Namely, it corresponds to the {\em phase-speed non-matching condition} mentioned in the introduction.
If we undo the scaling $f\mapsto -2cf$, then the constant solution $f=\uinf$ in \eqref{eq:int_cm1}
corresponds to the constant state $\bar u = -c\uinf$ for equation \eqref{eq:dB}.
For the linearization at this state, waves $e^{i\xi j-i\omega t}$ must satisfy the dispersion relation 
\begin{equation}
\omega = \bar u\sin \xi .
\end{equation}
Then the phase-speed non-matching requirement that $c\ne \omega/\xi$ is equivalent to the condition
that $1+2\uinf\sinc\xi\ne0$ for all $\xi\in\Omega^*$ as stated in the proposition.

\medskip
In general, when $L$ is finite we can also prove coercivity for any value of $\uinf\ne-\frac12$,
with a somewhat more involved argument.  

\begin{proposition}
    \label{prop:coercive2}
    Suppose $\Omega = (-L,L)$ is bounded.
    If $\uinf\ne-\frac12$, then $-\Snu$ is coercive on $L^2(\Omega)$.
    If $\uinf=-\frac12$, the functional $-\Snu$ is coercive on the subspace of $L^2(\Omega)$ 
    consisting of functions with mean zero.
\end{proposition}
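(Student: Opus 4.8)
The plan is to build on the general lower bound established in the proof of Proposition~\ref{prop:coercive}, which was derived without using invertibility: for every $\nu\in\dom\Snu$ one has
\[
-2\Snu[\nu] \ge \frac{\|\ma\bar w - (I+\uinf K)\nu\|^2}{\ma + C_K\|\nu\|},
\]
where $\|\cdot\|=\|\cdot\|_{L^2(\Omega)}$ and $C_K$ is the constant bounding $K$. Since the denominator grows only linearly, coercivity follows once the numerator is shown to grow at least linearly in $\|\nu\|$ along $\dom\Snu$. When $I+\uinf K$ is invertible this is immediate and is exactly Proposition~\ref{prop:coercive}, so the real work occurs when $I+\uinf K$ has a nontrivial kernel. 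The obstruction is that the numerator is blind to that kernel: a sequence with bounded range-component but diverging kernel-component would render the bound useless. Overcoming this is the crux, and the available tool is the convexity constraint $\ma+K\nu\ge0$ built into $\dom\Snu$.

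First I would record the spectral structure. Since $\Omega$ is bounded, $K$ is a compact self-adjoint operator whose eigenfunctions are the Fourier modes $e^{ik\pi x/L}$ with eigenvalues $2\sinc(k\pi/L)\to0$ (Proposition~\ref{prop:invertible}). Hence $I+\uinf K$ is Fredholm of index zero: its kernel $V$ is finite-dimensional, its range is closed and equals $V^\perp$, and there is $\alpha>0$ with $\|(I+\uinf K)q\|\ge\alpha\|q\|$ for all $q\in V^\perp$, the nonzero eigenvalues $1+2\uinf\sinc(k\pi/L)$ accumulating only at $1$ and so being bounded away from $0$. The decisive observation is that $V$ is spanned by modes with $K$-eigenvalue $-1/\uinf$; the constant mode has $K$-eigenvalue $2$, so it lies in $V$ if and only if $1+2\uinf=0$. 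Thus when $\uinf\ne-\tfrac12$ every element of $V$ is a combination of nonconstant modes, and therefore has mean zero.

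Next comes the core estimate. Writing $\nu=p+q$ with $p\in V$ and $q\in V^\perp$, I would use that $K\nu$ is continuous, so that the convexity condition holds pointwise: $\ma+K\nu\ge0$ on $\Omega$. Since $Kp=-\tfrac1\uinf p$, this reads $-\tfrac1\uinf p\ge-\ma-Kq$. Setting $h=-\tfrac1\uinf p$, the bound $-h\le\ma+Kq$ controls the negative part pointwise, $h^-\le\ma+|Kq|$, and because $h$ has mean zero (this is exactly where $\uinf\ne-\tfrac12$ enters) its positive and negative parts have equal integral, so $\|h\|_{L^1}=2\int_\Omega h^-\le C_1+C_2\|q\|$. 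As $p$ ranges over the finite-dimensional space $V$, on which the $L^1$ and $L^2$ norms are equivalent, this upgrades to $\|p\|\le C_3+C_4\|q\|$, whence $\|\nu\|\to\infty$ forces $\|q\|\to\infty$. Feeding $\|\ma\bar w-(I+\uinf K)\nu\|=\|\ma\bar w-(I+\uinf K)q\|\ge\alpha\|q\|-\|\ma\bar w\|$ into the inherited bound, the numerator grows quadratically and the denominator linearly in $\|\nu\|$, so $-\Snu[\nu]\to\infty$.

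Finally, for $\uinf=-\tfrac12$ the kernel $V$ is exactly the constants (only $k=0$ solves $\sinc(k\pi/L)=1$), so on the mean-zero subspace one has $q=\nu$, and the simple bound $\|(I+\uinf K)\nu\|\ge\alpha\|\nu\|$ already yields coercivity there, matching the restricted claim. The main obstacle throughout is the kernel term: everything hinges on using the pointwise convexity constraint together with the mean-zero property of kernel elements and the finite-dimensionality of $V$ to convert a one-sided pointwise bound into a genuine $L^2$ bound on $p$.
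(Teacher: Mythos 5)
Your proof is correct, and in the degenerate case ($\uinf\ne-\frac12$ but $I+\uinf K$ singular) it takes a genuinely different route from the paper's. The paper argues by dichotomy along sequences: writing $\nu_n=\mu_n+\eta_n$ with kernel and complementary parts, either $\|\mu_n\|\le C\|\eta_n\|$ (and the estimate of Proposition~\ref{prop:coercive} applies as before), or the kernel part dominates, in which case the constraint $\ma+K\nu_n\ge0$ is shown to be outright violated for large $n$ --- using that a nonzero kernel element is a mean-zero trigonometric polynomial, so $K\mu$ must be negative somewhere, quantified uniformly as $\min_x K\mu(x)\le\sigma_1\|\mu\|$ with $\sigma_1<0$ by compactness of the unit sphere in the finite-dimensional kernel --- forcing $-\Snu[\nu_n]=+\infty$ along that subsequence. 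You instead extract a single quantitative a priori bound valid on all of $\dom\Snu$: from the pointwise constraint $\ma+Kp+Kq\ge0$ with $Kp=-p/\uinf$ of mean zero, the negative part controls the full $L^1$ norm of the kernel component (this is exactly where $\uinf\ne-\tfrac12$ enters, just as in the paper), and norm equivalence on the finite-dimensional kernel upgrades this to $\|p\|\le C_3+C_4\|q\|$, after which coercivity follows uniformly from the bound inherited from Proposition~\ref{prop:coercive} with no subsequences or case split. Both arguments rest on the same two structural facts --- kernel elements have mean zero precisely when $\uinf\ne-\tfrac12$, and the convexity constraint caps the kernel component --- but your mean-zero/$L^1$ device replaces the paper's compactness argument and buys an explicit affine estimate showing that the effective domain of $\Snu$ is contained in a slab around $V^\perp$, which is slightly more informative; the paper's version is shorter in that the kernel-dominated regime is dismissed outright as infeasible rather than estimated. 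Your treatment of the case $\uinf=-\frac12$ and of the nonsingular case coincides with the paper's.
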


\begin{proof}
1. For the value $\uinf=-\frac12$, the self-adjoint operator
$I+\uinf K$ has one-dimensional kernel spanned by the constant function $1$. 
Restricting $\Snu$ to the orthogonal complement of this kernel, the proof of coercivity
works the same as the proof of proposition~\ref{prop:coercive}.

2. Suppose $\uinf\ne-\frac12$ but $I+\uinf K$ is singular.
Decompose $L^2(\Omega)$ into the finite-dimensional kernel $V$ of $I+\uinf K$ 
and its orthogonal complement $W$ on which $I+\uinf K$ has bounded inverse.
Each nonzero $\nu\in V$ is a trigonometric polynomial with mean zero, so
it is impossible that $K\nu(x)\ge0$ for all $x$. 
Then by a compactness argument, there exists some $\sigma_1<0$ such that 
whenever  $\|\nu\|= 1$ then $\min_x K\nu(x)\le \sigma_1$.

3. Let $(\nu_n)$ be a sequence in $L^2(\Omega)$ with $\|\nu_n\|\to\infty$
as $n\to\infty$. For each $n$ decompose $\nu_n$ as $\mu_n+\eta_n$ with $\mu_n\in V$
and $\eta_n\in W$.  There are now two cases: 
(i) Suppose that for some postive constant $C$, 
$\|\mu_n\|\le C\|\eta_n\|$ for all $n$.
Then $\|\nu_n\|\le (1+C)\|\eta_n\|\to\infty$ as $n\to\infty$, and 
\[
\|\ma\bar w - (I+\uinf K)\nu_n\| = \|\ma\bar w -(I+\uinf K)\eta_n\|\ge \alpha\|\eta_n\|-\|\ma\bar w\| \ge \hat\alpha\|\nu_n\| - \|\ma\bar w\|,
\]
where $\hat\alpha = \alpha/(1+C)$. One then infers that $-\Snu[\nu_n]\to\infty$
by the same argument as before.

(ii) If it is false that case (i) holds for some $C$,
then there must be a subsequence of $(\nu_n)$ (denoted the same)
such that $\|\mu_n\|\ge n\|\eta_n\|$ for all $n$, with $\|\mu_n\|\to\infty$.
By step 1, we note that 
\[
\min_x K\mu_n(x) \le \|\mu_n\|\sigma_1 <0,
\]
and since $K\nu_n = K\mu_n+K\eta_n$, 
\[
\min_x (\ma + K\nu_n) \le \ma + \|\mu_n\|\sigma_1 + C_K\|\eta_n\|
\le \ma + \|\mu_n\| (\sigma_1 + C_K/n). 
\]
This is strictly negative for sufficienty large $n$, and when this is the case
we must have $-\Snu[\nu_n]=\infty$.  
This finishes the proof.
\end{proof}

We suspect coercivity may always hold when $L=\infty$ as well. 
In any case,  we only get a proof that a solution to the NIE \eqref{eq:int_wform} exists
{\em on the condition} that a maximizer of $\Snu$ exists
that belongs to the interior of the  domain of $\Snu$. 
Presently, despite strong numerical evidence in favor as shown below,
we lack any proof that such a maximizer exists, for any values of $L$ and $\uinf$.

\medskip
{\em Second variation.} In general, at any point in the interior of the domain of $\Snu$,
its second variation can be found by substituting $\nu+t\,\delta w$ into \eqref{eq:DtPnu} and \eqref{eq:calS_var1}
and differentiating at $t=0$ to find that 
\[
\left.\frac{d}{dt} \hat w\right|_{t=0} = 
-\frac{(I+\uinf K)\delta w}{\ma+K\nu} - \frac{\hat w K(\delta w)}{\ma+K\nu}
 = -\frac{(I+\hat f K)\delta w}{\ma+K\nu} \,,
\]
where 
\begin{equation}\label{eq:hatf_uw}
\hat f = \uinf + \hat w = \frac{\ma(\uinf+\bar w)-\nu}{\ma + K\nu} \,.
\end{equation}
Then differentiation of \eqref{eq:calS_var1} yields
\begin{equation}\label{eq:Snu_var2}
    \delta^{(2)}\Snu[\nu;\delta\nu,\delta w] =  
    -\int_{-\infty}^\infty \delta\nu (I + K \hat f)(\mathsf{a}+K\nu)^{-1} (I+\hat f K) \delta w\,dx.
\end{equation}
Here $K\hat f$ is regarded as composition of operators with $K\hat f \,g = K(\hat f g)$. Since $K$ is self-adjoint we find
\begin{equation}\label{eq:2ndvarcalS}
    \delta^{(2)}\Snu[\nu;\delta\nu,\delta \nu] 
    =  - \int_{-\infty}^\infty 
    (\mathsf{a} + K\nu)^{-1} | (I+\hat f K)\delta \nu|^2\,dx\,.
    \end{equation}
Indicating explicitly the dependence upon the parameter $\ma$, this enjoys the scaling property
\begin{equation}
    \delta^{(2)}\Snu_\ma[\nu;\delta\nu,\delta\nu] = \ma\inv\,\delta^{(2)}\Snu_1[\ma\inv\nu;\delta\nu,\delta\nu].
\end{equation}

{\em Critical points and translational invariance.} 
Now let $L\in(0,\infty]$ and suppose $\nu$ is a maximizer of $\Snu[\nu]$ 
belonging to the interior of its domain, so that 
the strict convexity condition \eqref{c:s_convexity} holds.
Then as stated in proposition~\ref{prop:Snu_def}, $\hat w$ as given by \eqref{eq:DtPnu} is a 
smooth solution of \eqref{eq:int_wform} so that $\hat f=\uinf+\hat w$ from \eqref{eq:hatf_uw}
is a smooth solution of \eqref{eq:int_cm1} with $C_1=\uinf+\uinf^2$ as in \eqref{eq:C1}.

Now, equation~\eqref{eq:int_cm1} is translation invariant, meaning that if 
$f$ is a solution on $\R$, then the function $x\mapsto f(x+h)$ is a solution for any real $h$.
Differentiating with respect to $h$ at $h=0$ we find that 
\[
f'(x) + \int_{x-1}^{x+1} f(y)f'(y)\,dy = 0, \quad\text{i.e.,} \quad
f'+K(ff') = 0.
\]
Thus the operator $I+Kf$ has $f'$ in its kernel.
Multiplying this equation by $f(x)$ we find that 
\[
f(x)f'(x) + f(x) \int_{x-1}^{x+1} f(y)f'(y)\,dy = 0
\quad\text{i.e.,} \quad  (I+fK)(ff') = 0.
\]
That is, the (adjoint) operator $I+fK$ has the function $ff'$ in its kernel. 

For the maximizer $\nu$ this means that the second variation vanishes in \eqref{eq:2ndvarcalS} 
for the variation
\begin{equation}\label{eq:kernu}
\delta \nu = \hat f \hat f'\,.
\end{equation}
Indeed, $(I+\hat f K)(\hat f\hat f')=0$.
Now consider first the case when $L$ is finite.
The operator $\hat f K$ acting on $L^2(\Omega)$ is then always a compact operator, 
since the embedding of $H^1(\Omega)$ into $L^2(\Omega)$ is compact.
From the Riesz-Schauder spectral theory of compact operators, the eigenvalue $0$
is necessarily an isolated eigenvalue of $I+\hat f K$ and has a finite-dimensional 
(generalized) eigenspace, which we denote by $Z_0$.

In the case $L=\infty$ when $\Omega=\R$, the operator $K$ maps $L^2(\R)$ into $H^1(\R)$ but is not compact.
Because $\hat w$ is smooth with limit 0 at $\pm\infty$, though, the operator $\hat w K$ is compact
on $L^2(\R)$, due to the convenient compactness criteria of \cite{Pego.85}.
If we assume $I+\uinf K$ has bounded inverse on $L^2(\R)$, which is natural to ensure coercivity
according to proposition~\ref{prop:coercive}, then since $\hat f=\uinf+\hat w$,
the operator $I+\hat f K$ will be the sum of an invertible operator and a compact one,
i.e., Fredholm of index zero. Then the Riesz-Schauder theory ensures again that the
eigenvalue $0$ is isolated with finite-dimensional generalized eigenspace $Z_0$.

{\em Conditional strict coercivity of second variation.} 
We expect, but are unable to prove, 
that $Z_0$ is one-dimensional and spanned by $\hat f\hat f'$.
In any case, if $Y\subset L^2(\Omega)$ is any subspace complementary to $Z_0$,
then necessarily the operator $I+\hat fK$ is bounded below on $Y$, meaning that 
for some constant $\kappa_Y>0$,
\[
\| (I+\hat f K)u\|_{L^2} \ge \kappa_Y \|u\|_{L^2} \quad\text{for all $u\in Y$.}
\]
This means that we have (conditional) strict coercivity in \eqref{eq:2ndvarcalS} 
for variations in $Y$,  with
\begin{equation}
    -\delta^{(2)}\Snu[\nu;u,u]  \ge \frac{\kappa_Y^2}{\mathsf{a}_{\rm max}} \|u\|_{L^2}^2
    \quad \text{for all $u\in Y$.}
\end{equation}
Our numerics suggests that $Z_0$ is one-dimensional and the solitary wave can be chosen even, 
so $\hat f\hat f'$ is odd.  One could take $Y$ to consist of the even functions in $L^2$, then.

\section{Approximation and numerical examples for the DDE formulation}
\label{sec:numerical}
\subsection{Approximation for the DDE formulation}
We approximate weak solutions of the DDE \eqref{eq:primal_in_dual}, 
generating a weak form for solutions on a finite domain $\Omega=(-L,L)$ as follows: 
 We generate a residual by multiplying \eqref{eq:primal_in_dual} with a test function $\delta \lambda$ 
 that vanishes outside $\Omega$ and integrating.
 After integration by parts, this yields:
    \begin{equation}
\label{eq:residual}
    R[\lambda;\del \lambda] := \int_{-L}^{L} \,\left( - \del\lambda'\, \hat{f} + \frac{\del\lambda}{2} (\,\hat{f}\,^2_{x+1} - \hat{f}\,^2_{x-1}\,)\right) dx, 
\end{equation}
where we have eliminated the boundary terms by imposing boundary conditions $\delta\lambda(\pm L) = 0$.
Since the value of $\hat{f}(x)$ from \eqref{eq:dtp} depends upon values of $\lambda$ at $x-1$ and $x+1$,
defining the terms $\hat{f}^2_{x-1}$ and $\hat{f}^2_{x+1}$ in this integrand 
requires that $\lambda$ be defined in 
the {extended} domain $\Omega_2 = (-L-2,L+2)$.
Thus we find it suffices to describe a weak form for the dual problem as follows:

\begin{quote}
Find $\lambda:(-L-2,L+2)\rightarrow \mathbb{R}$,  
satisfying $\lambda(x)=0$ whenever $x\notin (-L,L)$, 
such that for any $\delta \lambda$ satisfying $\delta\lambda(x)=0$ whenever $x\notin (-L,L)$,
\begin{equation}
\begin{gathered}
\label{eq:residual_full}
    \int_{-L}^{L} \,\left( - \del\lambda'\, \hat{f} + \frac{\del\lambda}{2} (\,\hat{f}\,^2_{x+1} - \hat{f}\,^2_{x-1}\,)\right) dx = R[\lambda;\del \lambda] = 0.
    \end{gathered}
\end{equation}
 Here $\hat f(x)$ is determined for $x\in (-L-1,L+1)$ in terms of $\lambda$ by the DtP map \eqref{eq:dtp}.
\end{quote}

The weak form in \eqref{eq:residual_full} is the same problem that is satisfied by a maximizer 
of the functional $\tilde S$ that lies in the interior of its domain, as shown in Proposition~\ref{prop:tildeS}.
By the same arguments as in the proof of that result,
for any $\lambda\in H^1(\Omega_2)$ satisfying the weak formulation \eqref{eq:residual_full},
$\hat f$ is absolutely continuous inside $\Omega=(-L,L)$ and is a strong solution of \eqref{eq:primal} there. 
And $\hat f$ enjoys additional regularity properties as described in the remark following the proposition.

\subsubsection{A modified Newton-Raphson scheme with step-size control}
\label{sec:NR}
The solutions to \eqref{eq:residual_full} are obtained via a Newton Raphson (N-R) scheme based algorithm:
A nonlocal Galerkin Finite Element method has been implemented to approximate discrete solutions to \eqref{eq:residual_full}. We start by considering the variation of $R[\lambda;\delta\lambda]$ (eq.~\eqref{eq:residual_full}) in a direction $d\lambda$ given by:
\begin{equation}
\label{eq:jacobian}
    J|_{\lambda}[d\lambda;\del \lambda] = \int_{-L}^{L} \,\left( -\del \lambda'\, \frac{\p\hat{f}}{\p \dee} d\dee + \del\lambda \left(\,\hat{f}_{x+1}\, \frac{\p\hat{f}_{x+1}}{\p \dee_{x+1}} d\dee_{x+1} - \hat{f}_{x-1}\,\frac{\p\hat{f}_{x-1}}{\p \dee_{x-1}} d\dee_{x-1}\right)\right) dx,
\end{equation}
where \begin{equation*}
\dee_{x+c} \equiv
\{\lambda_{x+c-1},\lambda'_{x+c},\lambda_{x+c+1}\},
\end{equation*}
and
\begin{equation}
\label{qe:dual_deriv}
    \begin{aligned}
       \frac{\p \hat{f}_{x+c}}{\p \,\lambda'_{x+c}} = \frac{1}{\lambda_{x+c-1} - \lambda_{x+c+1} + \mathsf{a}}; \quad \frac{\p \hat{f}_{x+c}}{\p \,\lambda_{x+c \pm 1}} = \pm\frac{\mathsf{a} \bar{f}_{x+c} + \lambda'_{x+c}}{(\lambda_{x+c-1} - \lambda_{x+c+1} + \mathsf{a})^2}.
    \end{aligned}
\end{equation}

In the following, we will use the summation convention on repeated indices. We discretize the extended domain and approximate various fields on it as follows:
\begin{equation*}
\lambda(x) = \lambda^i N^i(x); \qquad \delta \lambda(x) = \delta \lambda^i N^i(x); \qquad d\lambda(x) = d\lambda^i N^i(x),
\end{equation*}
where first-order $C^0(\Omega)$ shape functions $N^i$ are considered, and $i$ runs over the nodes of the discretized extended domain. Let $x^A$ denote the position of any node $A$ on the extended domain and define a set  $\mathcal{S}$ of nodal indices as follows:
\begin{equation*}
    \mathcal{S} = \{B\,|\,x^B\in(-L,L)\}.
\end{equation*}
Our objective is to identify the coefficients $\lambda^A$ for all nodes $A\in\mathcal{S}$ such that the discrete residual generated from \eqref{eq:residual_full}, when equated to $0$, gets satisfied. The discrete residual can be given as:
\begin{equation}
    R^A = \int_{-L}^{L} \,\left( - (N^A)'\, \hat{f} + \frac{N^A}{2} (\,\hat{f}\,^2_{x+1} - \hat{f}\,^2_{x-1}\,)\right) dx,
    \label{eq:discrete_resi}
\end{equation}
where $\hat{f}$ is now depends on the discretized dual field of $\lambda$. Correspondingly, the Jacobian \eqref{eq:jacobian} can be discretized as:
\begin{equation*}
    J|_{\lambda}[d\lambda;\del \lambda] = \delta \lambda^A \, J^{AB}(\lambda) \, d\lambda^B,
\end{equation*}
where
\begin{multline}
    \label{discrete_jaco}
    J^{AB} = \int_{-L}^{L} \,\Bigg( - (N^A)'\left( \frac{\p\hat{f}}{\p \lambda'_x} (N^B)'(x) +\frac{\p\hat{f}}{\p \lambda_{x-1}} N^B(x-1) 
    +\frac{\p\hat{f}}{\p \lambda_{x+1}} N^B(x+1)\right) \\ + 
     N^A\,\hat{f}_{x+1}\left( \frac{\p\hat{f}_{x+1}}{\p \lambda'_{x+1}} (N^B)'(x+1) +\frac{\p \hat{f}_{x+1}}{\p \lambda_x} N^B(x) 
    +\frac{\hat{f}_{x+1}}{\p \lambda_{x+2}} N^B(x+2)\right) \\ - 
     N^A\,\hat{f}_{x-1}\left( \frac{\p\hat{f}_{x-1}}{\p \lambda'_{x-1}} (N^B)'(x-1) +\frac{\p \hat{f}_{x-1}}{\p \lambda_{x-2}} N^B(x-2) 
    +\frac{\hat{f}_{x-1}}{\p \lambda_x} N^B(x)\right)\Bigg) dx,
\end{multline}
and \eqref{eq:dtp} and \eqref{qe:dual_deriv} can be utilized to evaluate the above expression.

To generate corrections for dual field, we implement a modification to the generic N-R scheme based on the following steps:
\begin{equation}
\label{eq:increment}
    \begin{aligned}
     - R^A\left(\lambda^{(k-1)}\right) & =  J^{AB}\left(\lambda^{(k-1)}\right)\, d\lambda^B;\\
         \lambda^{(k)} & = \lambda^{(k-1)} + \alpha \,d\lambda,
    \end{aligned}
\end{equation} 
where $\lambda^{(k)}$ denotes the discretized dual field of $\lambda$  at $k^\text{th}$ iterate and $\alpha$ is a \textit{step-size control} factor. $\alpha=1$ in \eqref{eq:increment} implies a simple N-R scheme. The introduction of $\alpha$ has been motivated below.

In cases where the base state is set far away from any of the  potential solutions of \eqref{eq:primal}, the correction obtained via a simple N-R scheme can potentially lead to a dual field which violates the convexity condition $\mathscr{C}$  \eqref{eq:convexity_cond}. For such cases, we stipulate the following condition on any discretely obtained dual iterate:  
\begin{equation*}
    \mathscr{C}_d : \quad \underset{\Omega}{\mbox{min}}\,\frac{\Delta_\lambda}{\mathsf{a}}>T
\end{equation*}
where $T\in[0,1]$ represents a threshold value such that a large value of $T$ indicates a large denominator. We generally opt for  $T>0.1$ with larger values implying that the dual fields take values away from the convexity boundary. However, smaller values of $T$ can also be chosen; a value of $T=0.01$ yields an approximation with a residual tolerance of $10^{-11}$.

To satisfy $\mathscr{C}_d$, we control the value of $\alpha$ in the N-R iterates via Alg.~\ref{algo}.
 Starting from any fixed base state, we use $\lambda^{(0)}=0$ at each node and $\alpha=1$. Each time a correction leads to a dual field which violates $\mathscr{C}_d$ at any point in the domain, we reduce $\alpha$ by a factor of 2 and re-evaluate the correction until the criteria is met. If the factor $\alpha$ attains too a small value judged by a threshold (set to $0.01$ in the presented calculations), we stop the step-size controlled N-R and declare the primal field $f^{(H)}$ (evaluated at Gauss points) corresponding to the current dual iterate $\lambda$ as the best improvement that can be obtained starting from the current base state. Using this primal field as a base state, we restart the step-size controlled N-R scheme with $\alpha=1$. Such occasional base state resets followed by controlled N-R steps are carried out until convergence on residual \eqref{eq:residual_full} is reached while ensuring that $\mathscr{C}_d$ remains satisfied. The algorithm has been summarized in Alg.~\ref{algo}.
 
 The convergence criteria is given as:
\begin{equation}
    \textbf{tol}: \quad\underset{A}{\mbox{max}}\,|R^A| < \textbf{tol} \quad \forall A\in\mathcal{S},
    \label{eq:converge_NR}
\end{equation}
where $\textbf{tol}$ is a user-defined tolerance. In the following, 

\begin{equation}
\textbf{tol}=10^{-12} \qquad \mbox{for all problems solved.}
\label{eq:tol}
\end{equation}

\begin{table}
{\begin{algorithm}[H]
\caption*{{\bf Algorithm for modified N-R scheme with step-size control}} 
\begin{algorithmic}
\\
\State Choose the values for $L$, $N$ and $\textbf{tol}$. $T=0.95$ and $\mathsf{a}=10^6$ and can be changed. Set $\alpha=1$ and choose a starting base state $\bar{f}$ . Set $c=0$ ($c$ : base state reset counter). 
\\\hrulefill\vspace{1mm}
\begin{enumerate}
\item \label{step:start_globalNR} \textbf{Global Loop}:
\item [] \textbf{WHILE} $c \geq 0$:
    \begin{enumerate}[(i)]
    \item Set $k = 0$ ($k$ : N-R counter)  and $\lambda^{(0)}=0$ at each node. 
    \item Find $\lambda^{(1)}$ using step no.~\ref{step:increment} and set $k=1$.  \item \label{step:abort} \textbf{IF} $\underset{A}{\mbox{max}}\,\big|R^A[\lambda^{(1)}]\big|>\underset{A}{\mbox{max}}\,\big|R^A[\lambda^{(0)}]\big|$ then \textbf{ABORT} 
    \item \label{eq:NR_steps} \textbf{WHILE} $k > 0$ : 
    \begin{enumerate}[(a)]
        \item  \textbf{IF} $\underset{A}{\mbox{max}}\,\big|R^A[\lambda^{(k)}]\big|<\textbf{tol}$ : \textbf{EXIT} and \textbf{GOTO} step no.~\ref{step:end_global_NR}. \vspace{1mm}
        \item \textbf{IF} $\alpha<0.01$ $\rightarrow$ Reset base state (\textbf{GOTO} step no.~\ref{step:resetNR}).  
        \item Find $\lambda^{(k+1)}$ using using step no.~\ref{step:increment}.
        \item \textbf{DO} $k=k+1$ and \textbf{GOTO} step no.~\ref{eq:NR_steps}. 
        \end{enumerate}
    \end{enumerate}

\\[]\hrulefill

    \item \label{step:increment} \textbf{Evaluate Increment}:
    \begin{enumerate}[(i)]
    \item Evaluate $R^A[\lambda^{(k)}]$ and $J^{AB}[\lambda^{(k)}]$.
        \item Find $\lambda^{(k+1)}$ based on current $\alpha$ from \eqref{eq:increment} and check for  condition \textbf{($\mathscr{C}_d$)} (step no.~\ref{step:CC}).
        \item[]  \textbf{IF} $\mathscr{C}_d$ is not satisfied:
         \textbf{DO} $\alpha = \alpha/2$ and redo current step.  
    \item[] \textbf{ELSE}
    return $\lambda^{(k+1)}$.
     \end{enumerate}

\\[]\hrulefill
% Newton Raphson
    \item \label{step:CC} \textbf{Condition $\mathscr{C}_d$}:
    \begin{enumerate}[(i)]
    \\ Evaluate $\Delta_\lambda$ (using \eqref{eq:denominator}) on discretized domain of $\Omega$ based on $\lambda^{(k+1)}$.
    \\ Check for
     $$ \underset{\Omega}{\mbox{max}}\,\frac{\Delta_\lambda}{\mathsf{a}}>T$$
     \end{enumerate}

\\[]\hrulefill
% Newton Raphson
    \item \label{step:resetNR} \textbf{Reset base state and restart N-R}:
    \begin{enumerate}[(i)]
    \\ Evaluate $f^{(H)}$ based on $\lambda^k$ (using \eqref{eq:dtp}) on the Gauss points of discretized domain.
    \\ Set $\bar{f} = f^{(H)}$ at the Gauss points.
    \\ Do $c=c+1$, $\alpha=1$ and restart N-R from step no.~\ref{step:start_globalNR}.
     \end{enumerate}
\\[]\hrulefill
% End Newton Raphson
  \item \label{step:end_global_NR} Perform an $L^2$ projection to obtain $f^{(H)}$ at nodes. This establishes the solution.
  \end{enumerate}
\end{algorithmic}
\end{algorithm}}
\caption{Algorithm to solve \eqref{eq:primal_in_dual}. $N$ represents the number of elements used to discretize the extended domain. Step \ref{step:abort} indicates that condition $\mathscr{C}_d$ was satisfied but the Newton residual did not decrease. The algorithm currently is applicable to a wide range of base states, even those far from the solutions. It fails for base states close to being outside the function class allowed for the problem (for e.g. discontinuous base states).  }
\label{algo}
\end{table}

\subsection{Numerical examples for the DDE formulation}\label{sec:dde_numerics}
In each of the following examples, we discretize the extended domain where $x \in (-L-2, L+2)$, choose a base state for this extended domain, and allow the dual scheme to pick up a solution within the domain of interest, $x \in (-L, L)$.
Following \eqref{eq:residual_full}, we employ $\lambda_s(x)=0$ (without loss of generality). The figures produced in the following sections are based on a standard $L^2$ projection performed from Gauss points to the nodal points (projection only performed in the domain of interest).

For all the following problems,  $\mathsf{a}=10^6$ and $L=8$ unless otherwise stated. The justification for the choice of $\mathsf{a}$ is as follows: while $\mathsf{a} \neq 0$
is a free choice in the theoretical scheme (for the critical point formulation of $S$), it is clear from the convexity condition \eqref{eq:convexity_cond} (cf.~\cite[Sec.~3]{Acharya:2024:HCC} for conclusions on degenerate ellipticity of the dual problem in the PDE case) that in seeking solutions, a large value of $\mathsf{a} > 0$ is practically useful in allowing more freedom to sample dual states (`centered' around the state $\lambda = 0$ in the entire domain) where the problem is concave, and for obtaining solutions. In App.~\ref{app:beta} we demonstrate this fact by a computed example.

For each of the examples presented below (except the first one since it is a trivial example), we compute the maximum of absolute difference (normalized with respect to the RMS value of the field) across the domain when the number of elements in the mesh is doubled. Based on a uniform mesh of $6400$ elements, the RMS value is defined as:
\begin{equation*}
   f_{RMS} = \sqrt{\sum_A\frac{(f^A)^2}{n}}, 
\end{equation*}
where $A$ ranges over the total number of nodes in the domain of interest ($n$). Accordingly,
we define the difference $\mathfrak{D}(m)$ (percentage measure), where $m$ is the number of elements in the mesh under consideration, as follows:
\begin{equation}
\label{eq:diff_converge}
    \,\mathfrak{D}(m) = \underset{A}{\mbox{max}}\frac{\left|f_{2 m}^A - f_{m}^A\right|}{f_{RMS}} \times 100,
\end{equation}
 where $f_m^A$ and $f_{2m}^A$ represent the primal fields obtained using meshes with $m$ and $2m$ elements, respectively, at node $A$. $\mathfrak{D}(m)$ values for each of the examples subsequently presented can be found in App.~\ref{app:tables}: Table \ref{table:convergence}. The obtained primal field in each of the examples is further subjected to a Finite Difference approximation of \eqref{eq:primal}. Details can be found in App.~\ref{app}.

 In the following, we will collectively refer to the last two operations as convergence of results w.r.t mesh refinement in the discussion of computed results.

\subsubsection{Approximating solutions with prior knowledge} \label{sec:prior_knowledge}
In this section, we examine cases where the base states are chosen close to the solutions of \eqref{eq:primal}. These base states are specifically designed so that, when used in Alg.~\ref{algo}, a simple N-R method can be employed. Accordingly, the Convexity Condition is met at each N-R iterate without employing the step-size control and the initial base state remains unchanged throughout the execution of the algorithm.

A fixed point iteration scheme due to Petviashvili (cf.~\cite[Sec.~4.3]{IP24}, \cite{petviashvili1976equation}) is utilized to generate solutions to \eqref{eq:primal}, which we will refer to as the $\mathcal{PV}$ solutions. For $C_1 = 0$ in \eqref{eq:int_cm1}, we define:
\begin{equation}
    g(x) := -f(x): \quad g(x) = \frac{1}{2}\int_{x-1}^{x+1}\,g(s)^2 ds = \frac12(\Lambda\, *\, g^2)(x),
\end{equation}
where 
\[
\Lambda(x) =
\begin{cases} 
1 & \text{if } |x| < 1, \\
0 & \text{otherwise} ,
\end{cases}
\]
and employ the following iterative scheme $(n\geq1)$:
\begin{equation}
\label{eq:PVscheme}
\begin{aligned}
    \text{Step 1:} & \quad \tilde{g}_{n+1}(x) = \frac12(\Lambda * g_n^2)(x); \\
    \text{Step 2:} & \quad { \displaystyle \tilde{C}_{n+1} = \frac{\int_{-\infty}^{\infty} g_n\, dx}{\int_{-\infty}^{\infty} \tilde{g}_{n+1}\,dx};} \\
    \text{Step 3:} & \quad g_{n+1}(x) = \tilde{C}_{n+1}^q \, \tilde{g}_{n+1}(x),
\end{aligned}
\end{equation}
where $q=1.4$ has been used, and the integrals in second step are for $x\in(-\infty,\infty)$.
We set 
\begin{equation*}
    g_{1}(x) = \frac{1}{ \sqrt{2\pi}} e^{-\frac{x^2}{2}}
\end{equation*} and set the following tolerance for convergence:
\begin{equation*}
    |\tilde{C}_n - 1| < 10^{-6}.
\end{equation*}
The integrations in Step 2 of \eqref{eq:PVscheme} are computed on a finite domain. Due to the rapid decay of the $\mathcal{PV}$ solution away from $x = 0$, the domains considered are large enough to minimize the impact of the finite domain on the integrals, and it has been verified that the final $\mathcal{PV}$ profiles obtained on doubling the domain are very close.

    One of the $\mathcal{PV}$ solutions is shown in  Fig.~\ref{fig:P1}. Employing this solution as a base state for the dual scheme, Fig.~\ref{fig:P1} also demonstrates that the obtained primal field $f^{(H)}$ remains close to it.

\begin{figure}
    \centering
    % First subfigure
    \begin{minipage}[b]{0.45\textwidth}
        \centering        \includegraphics[width=\linewidth]{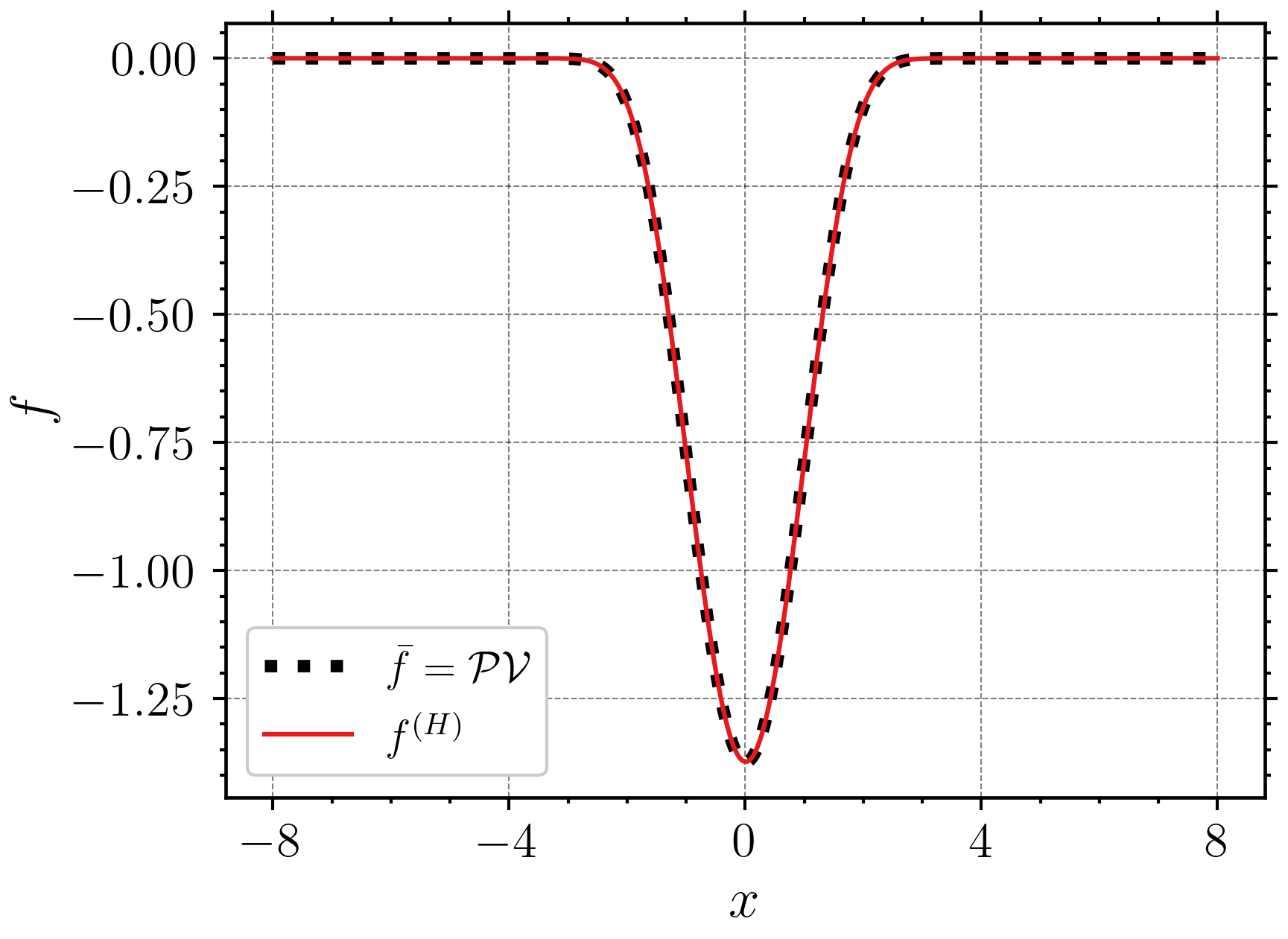}  
\subcaption{Base state: $\mathcal{PV}$ solution}
\label{fig:P1}
    \end{minipage}
    \hspace{0.05\textwidth}
    % Second subfigure
    \begin{minipage}[b]{0.45\textwidth}
        \centering        \includegraphics[width=\textwidth]{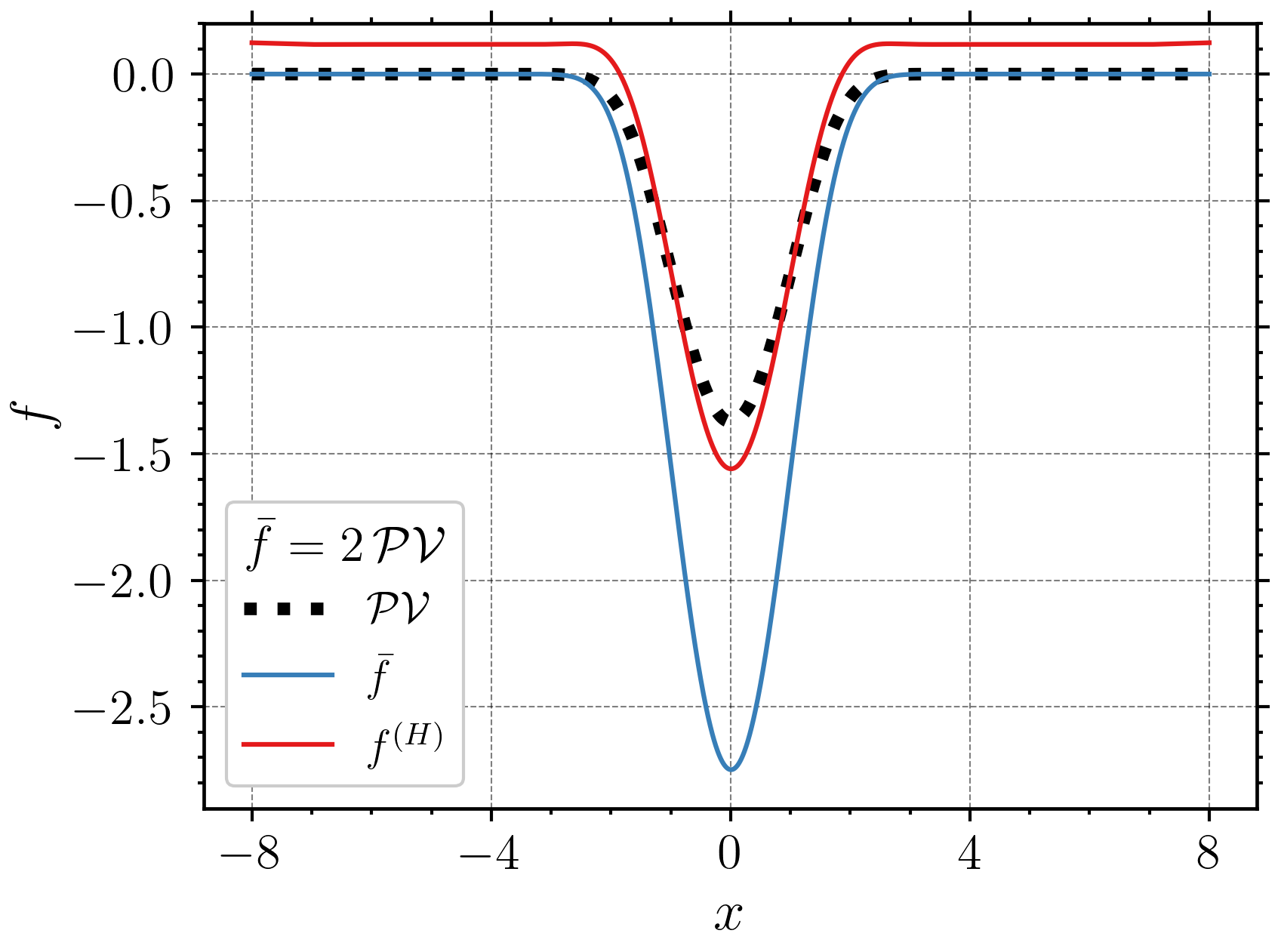} % Replace with your first figure file
        \subcaption{Base state: $2\,\mathcal{PV}$ solution}
        \label{fig:sub-P2_1}
    \end{minipage}
    
    % Unified caption
    \caption{Figure~(a) employs the $\mathcal{PV}$ solution directly as the base state, whereas Figure~(b) utilizes the $\mathcal{PV}$ solution scaled by a factor of 2 before using it as the base state.  }
    \label{fig:P2}
\end{figure}

We now examine how much we can deviate from the $\mathcal{PV}$ solution used as a base state. We consider the following base state:
\begin{equation}
\label{eq:2PV_base_state}
    \bar{f}(x) = \tilde{\alpha} \, \mathcal{PV}(x),
\end{equation} where $\tilde{\alpha}$ denotes a scaling factor. For $\tilde{\alpha}=2$, the results are shown in Fig.~\ref{fig:sub-P2_1}.  Comparing against Fig.~\ref{fig:P1}, it is evident that given an input condition
that differs from the $\mathcal{PV}$ solution, the dual scheme can pick up solutions different from the $\mathcal{PV}$ solution. Convergence of the obtained primal profile w.r.t mesh refinement can be found in App.~\ref{app:figures}: Fig.~\ref{fig:sub-P2_2}.

Results obtained for the base states set up with different scaling factors are shown in Fig.~\ref{fig:sub-P3_1}, which indicate that for $\tilde{\alpha}=0.2$, we obtain a constant-in-space type primal field. Additionally for an approximate range of $0.2<\tilde{\alpha}<0.8$ and $\tilde{\alpha}>2.3$, the dual scheme fails to converge with a simple N-R. Convergence results for the obtained primal profile w.r.t mesh refinement for these examples can be found in App.~\ref{app:figures}: Fig.~\ref{fig:CA}.

For certain examples, the obtained primal fields exhibit slight bending near the domain boundaries at $x=\pm8$. This behavior becomes apparent when scaling the y-axis to smaller scales, as clearly illustrated in App.~\ref{app:figures}: Fig.~\ref{fig:sub-CA_1} (primal profiles obtained for $\tilde{\alpha}=0.2$ on different meshes: Range of plot is $\mathcal{O}(10^{-3})$). The primal field $f^{(H)}$ satisfies the governing equation in $(-L,L)$ and the solutions obtained satisfy the prescribed tolerance. Since $L$ is an adjustable parameter, solutions on arbitrarily large domains without such bends, if deemed undesirable, can be obtained, up to computational cost.

The fact that the primal problem can be solved without any boundary condition specified on the primal field may be considered an interesting aspect of the dual scheme.

Finally, Fig.~\ref{fig:sub-P3_2} shows the result obtained when a $\mathcal{PV}$ solution with two self-similar structures (humps) on the same domain and scaled by a factor $\tilde{\alpha}=2$ is used as a base state. Corresponding profiles w.r.t mesh refinement are shown in App.~\ref{app:figures}: Fig.~\ref{fig:sub-CA_5}.

\begin{figure}
    \centering
    % First subfigure
    \begin{minipage}[b]{0.45\textwidth}
        \centering
        \includegraphics[width=\textwidth]{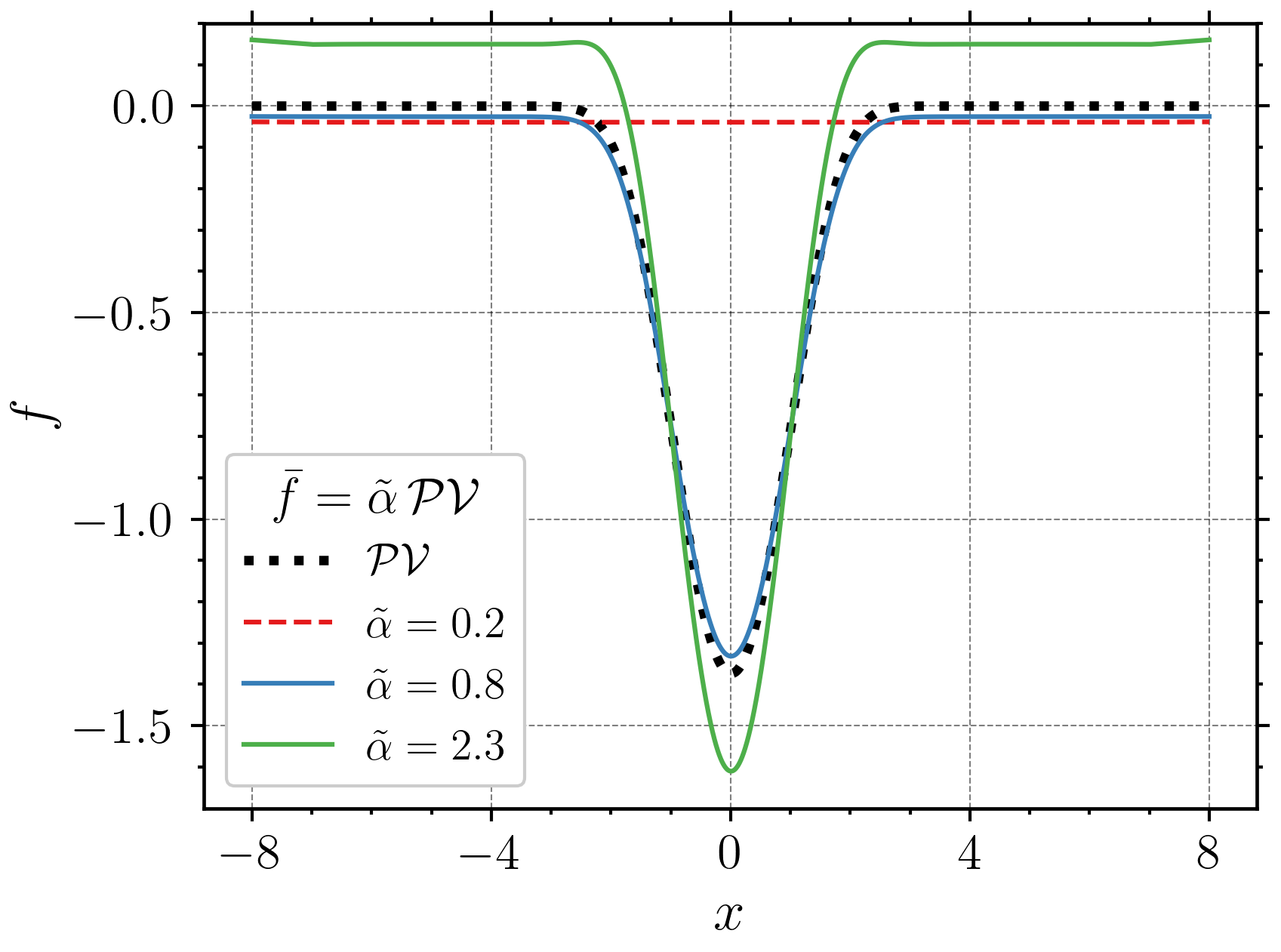} % Replace with your first figure file
        \subcaption{Single hump primal profiles}
        \label{fig:sub-P3_1}
    \end{minipage}
    \hspace{0.05\textwidth}
    % Second subfigure
    \begin{minipage}[b]{0.45\textwidth}
        \centering
        \includegraphics[width=\textwidth]{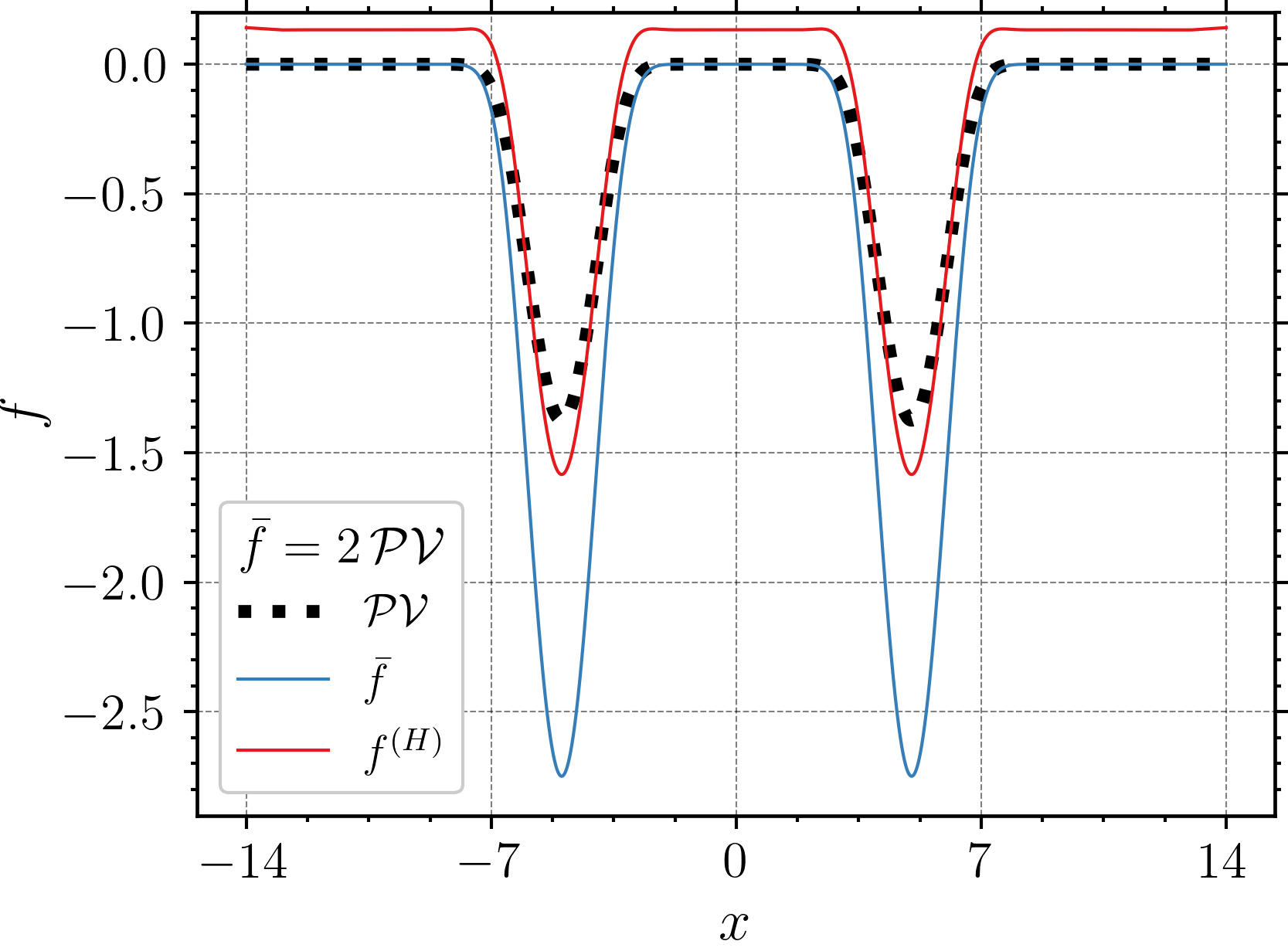} % Replace with your first figure file
        \subcaption{Double hump profile}
        \label{fig:sub-P3_2}
    \end{minipage}

    % Unified caption
    \caption{Profiles in Fig.~(a) are produced with a single hump $\mathcal{PV}$ solution scaled by $\tilde{\alpha}$ set as the base state. Fig.~(b) uses a double hump $\mathcal{PV}$ solution scaled by a factor of 2 as the base state.} 
    \label{fig:P3}
\end{figure}

\subsubsection*{Scaling invariance of the primal solution with $\ma$ for $H = \half \ma (f - \bar{f})^2$}
The scaling invariance indicated and explained in Sec.~\ref{sec:DDE_formulation}, preamble of Sec.~\ref{sec:dde_numerics} and App.~\ref{app:beta} has been demonstrated in Fig.~\ref{fig:invar}. 
Numerically, a larger value of $\ma$ allows us to search for the dual solution in an enlarged space. This has been discussed in App.~\ref{app:beta}.

\begin{figure}
    \centering
    % First subfigure
    \begin{minipage}[b]{0.45\textwidth}
        \centering
        \includegraphics[width=\textwidth]{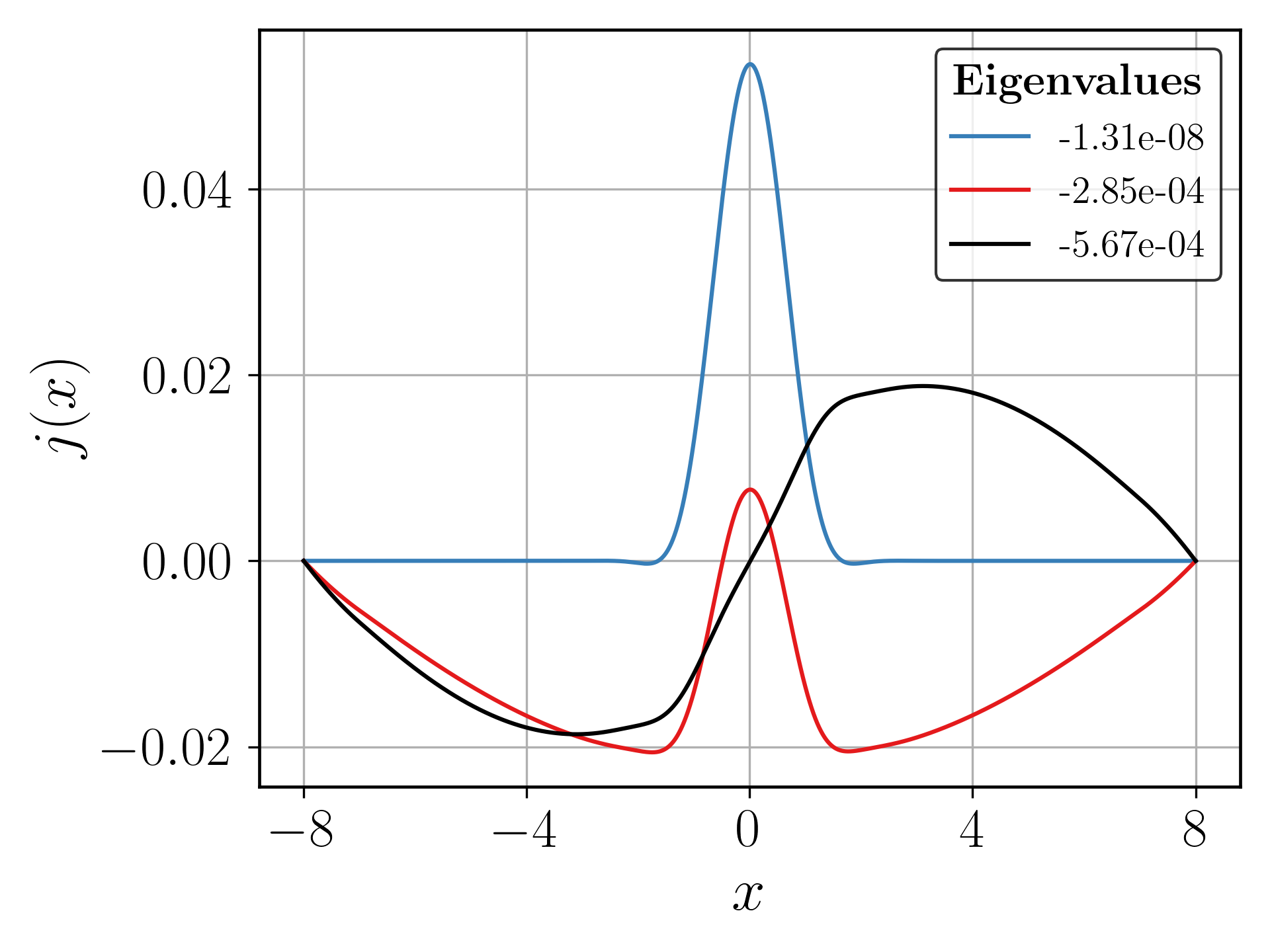} % Replace with your first figure file
        \subcaption{$\ma = 1$}
        \label{fig:invar_1}
    \end{minipage}
    \hspace{0.05\textwidth}
    % Second subfigure
    \begin{minipage}[b]{0.45\textwidth}
        \centering
        \includegraphics[width=\textwidth]{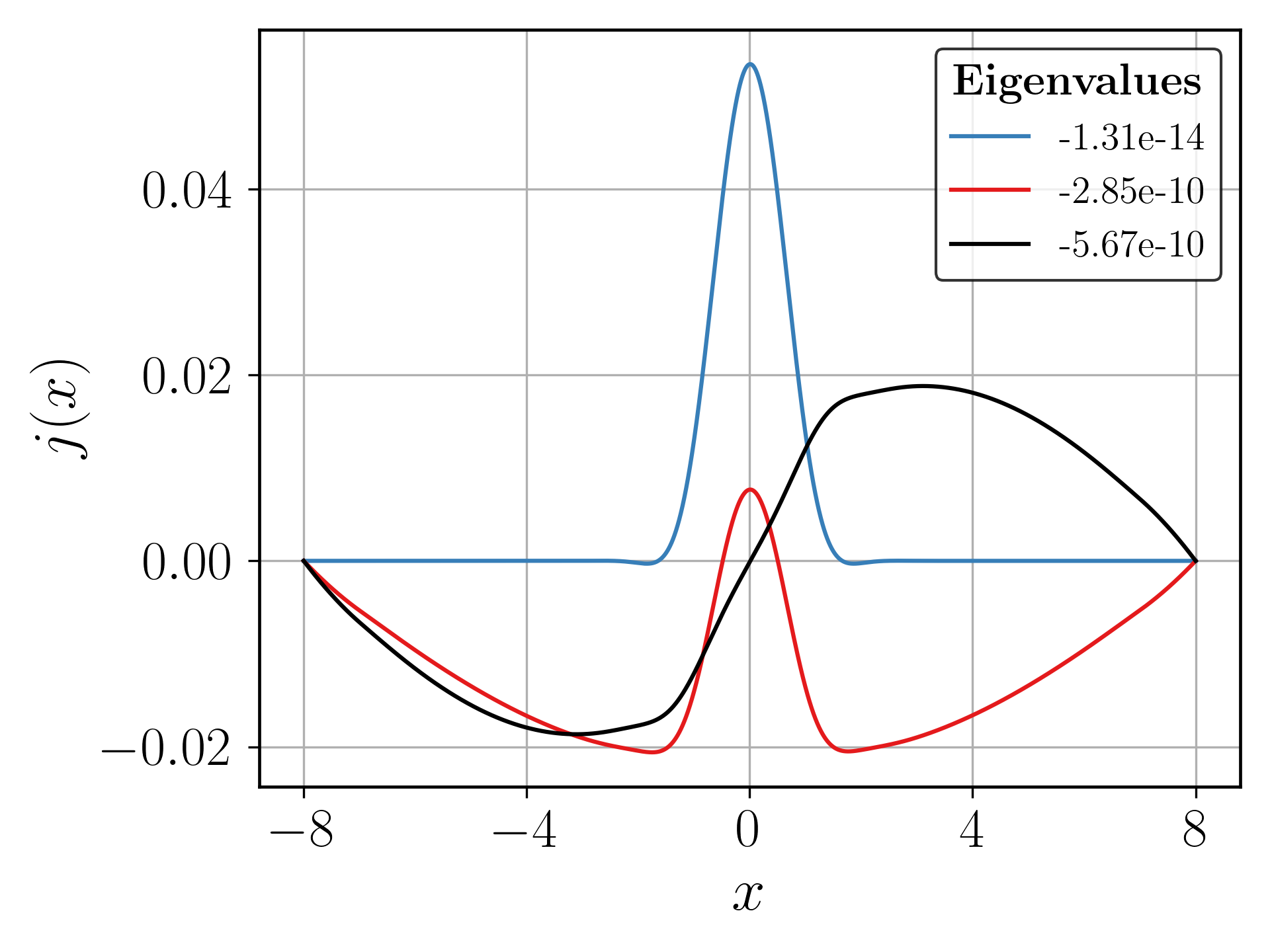} % Replace with your first figure file
        \subcaption{$\ma = 10^6$}
        \label{fig:invar_2}
    \end{minipage} 

    % Third subfigure
     
    % Unified caption
    \caption{Invariance in the eigenfunctions upon scaling the factor $\ma$: The DtP mapped image of the eigenvectors $j$ associated with the discrete Jacobian matrix \eqref{discrete_jaco} are plotted against nodal positions $x$ for the converged dual field $\lambda$. Three smallest eigenvalues have been shown. Following parameters were set: $\bar{f} = 2 \,\mathcal{PV}$, Mesh = 6400.} 
    \label{fig:invar}
\end{figure}

\subsubsection{Approximating solutions without any prior knowledge}\label{sec:without_prior}

{\bf \underline{Gaussian base states}:}

\noindent We start by considering the following standard Gaussian:
\begin{equation}
    \mathcal{G}(x) = \frac{1}{ \sqrt{2\pi}} e^{-\frac{x^2}{2}},
    \label{eq:gaussian}
\end{equation}
and the base states of the following type:
\begin{equation*}
    \bar{f}(x) = \gamma\, \mathcal{G}(x)
\end{equation*}
For a range of $\gamma$ values, we generate primal fields from dual solutions obtained from a simple N-R scheme. The simple N-R converges only for \(-5.2 < \gamma < -2.7\) and for \(-0.5<\gamma < 5\) (amongst the possibilities tried). For the latter range of $\gamma$, the dual scheme tends to pick up constant-in-space primal fields. The primal profiles obtained for a few different values of $\gamma$ are shown in Fig.~\ref{fig:P4}. 
The corresponding profiles on mesh refinement are presented in App.~\ref{app:figures}: Fig.~\ref{fig:CG}.

\begin{figure}
    \centering
    % First subfigure
    \begin{minipage}[b]{0.45\textwidth}
        \centering
        \includegraphics[width=\textwidth]{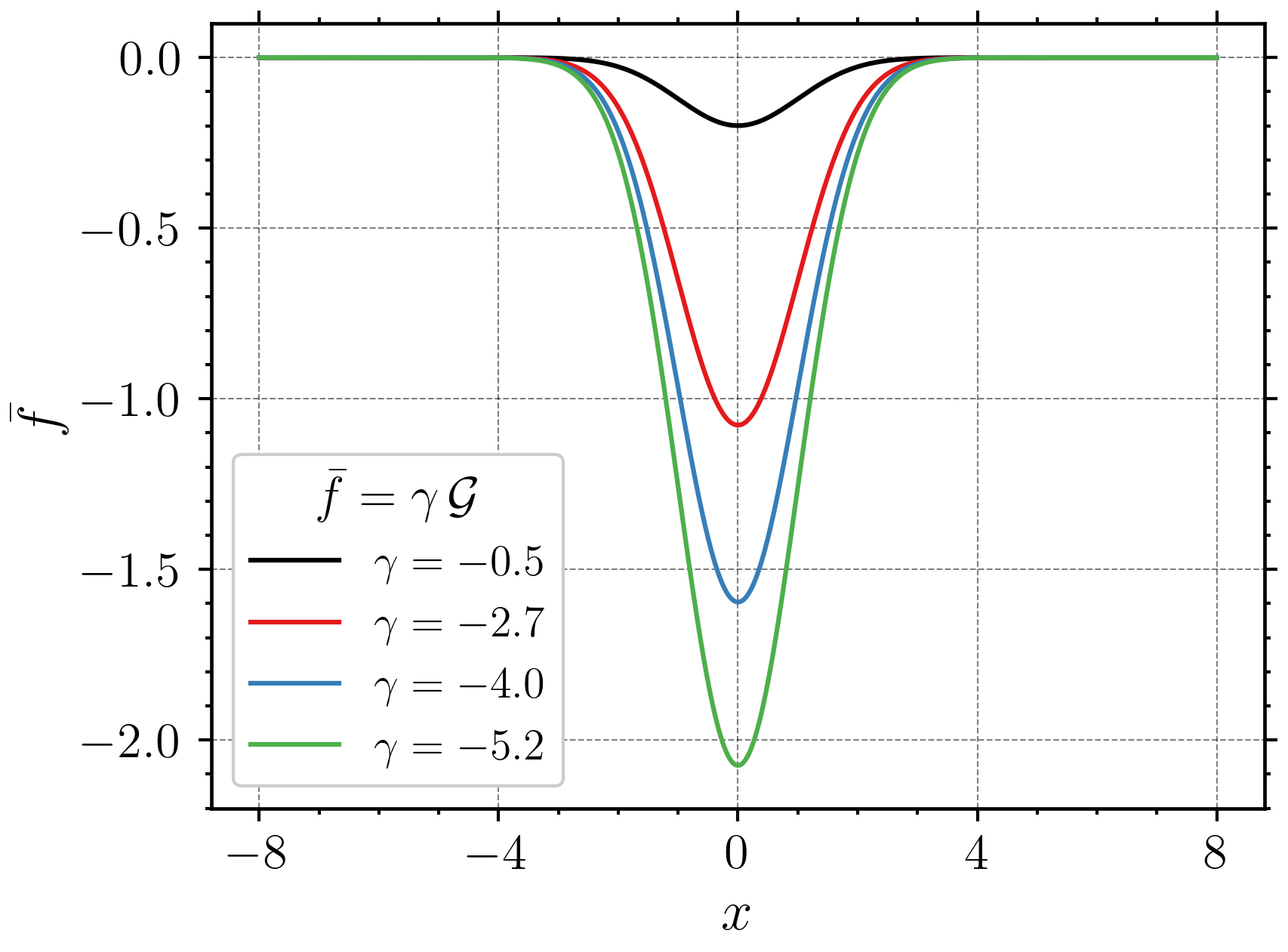} % Replace with your first figure file
        \subcaption{Scaled gaussian base states}
        \label{fig:sub-P4_1}
    \end{minipage}
    \hspace{0.05\textwidth}
    % Second subfigure
    \begin{minipage}[b]{0.45\textwidth}
        \centering
        \includegraphics[width=\textwidth]{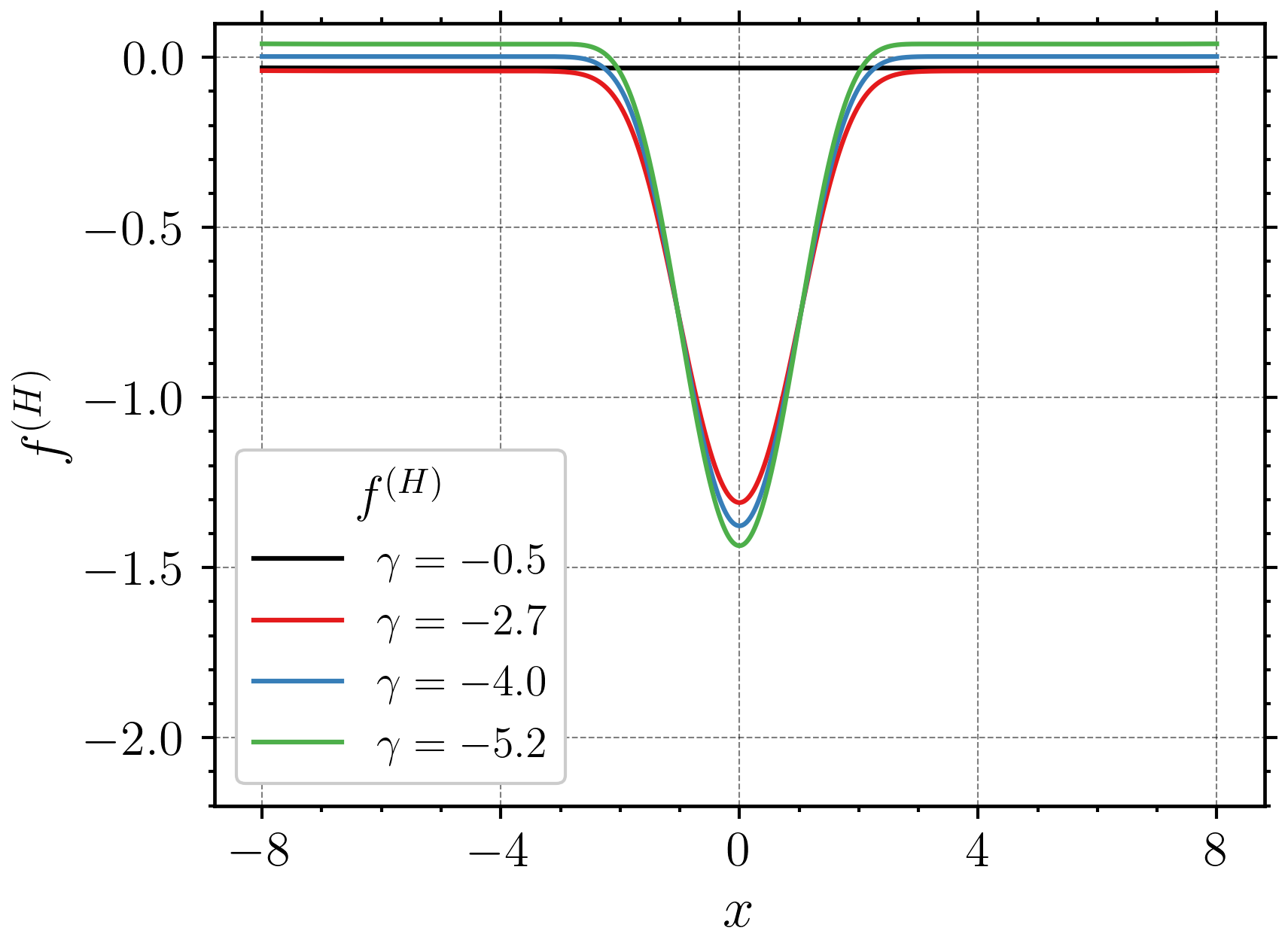} % Replace with your second figure file
        \subcaption{Obtained primal fields}
        \label{fig:sub-P4_2}
    \end{minipage}
    
    % Unified caption
    \caption{Primal field profiles in Fig.~(b) are obtained using the corresponding scaled Gaussian profiles set as base states from Fig.~(a) (drawn on same scale). Simple N-R scheme was used to produce the results. convergence results upon can be found in App.~\ref{app:figures}: Fig.~\ref{fig:CG}.} 
    \label{fig:P4}
\end{figure}

Employing Alg.~\ref{algo} allows us to pick up solutions to \eqref{eq:primal_in_dual} starting from a wide range of $\gamma$ in \eqref{eq:gaussian}. As an example, Fig.~\ref{fig:GF_P1} shows the results of starting the dual scheme from two nearby Gaussian base states ($\gamma=-1.7$ and $\gamma=-1.9$), each approximating to a different primal field. We note that these base states fail to converge with a simple N-R scheme on a mesh of 6400 elements. The corresponding mesh refinement profiles are presented in App.~\ref{app:figures}: Fig.~\ref{fig:CGF_P1}.

\begin{figure}
    \centering
    % First subfigure
    \begin{minipage}[b]{0.45\textwidth}
        \centering
        \includegraphics[width=\textwidth]{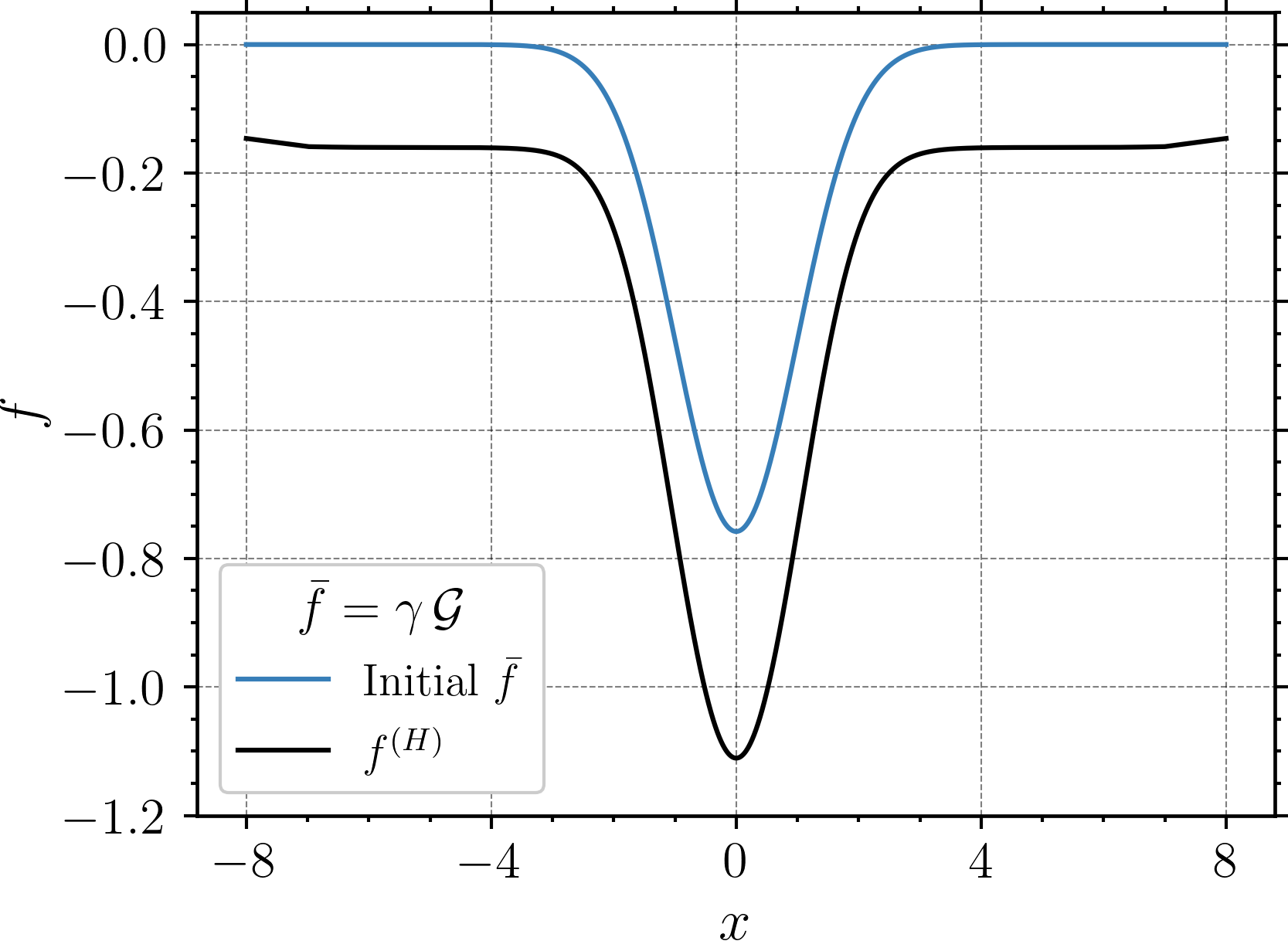} % Replace with your first figure file
        \subcaption{$\gamma=-1.9$}
        \label{fig:sub-GF_P1_1}
    \end{minipage}
    \hspace{0.05\textwidth}
    % Second subfigure
   \begin{minipage}[b]{0.45\textwidth}
        \centering
        \includegraphics[width=\textwidth]{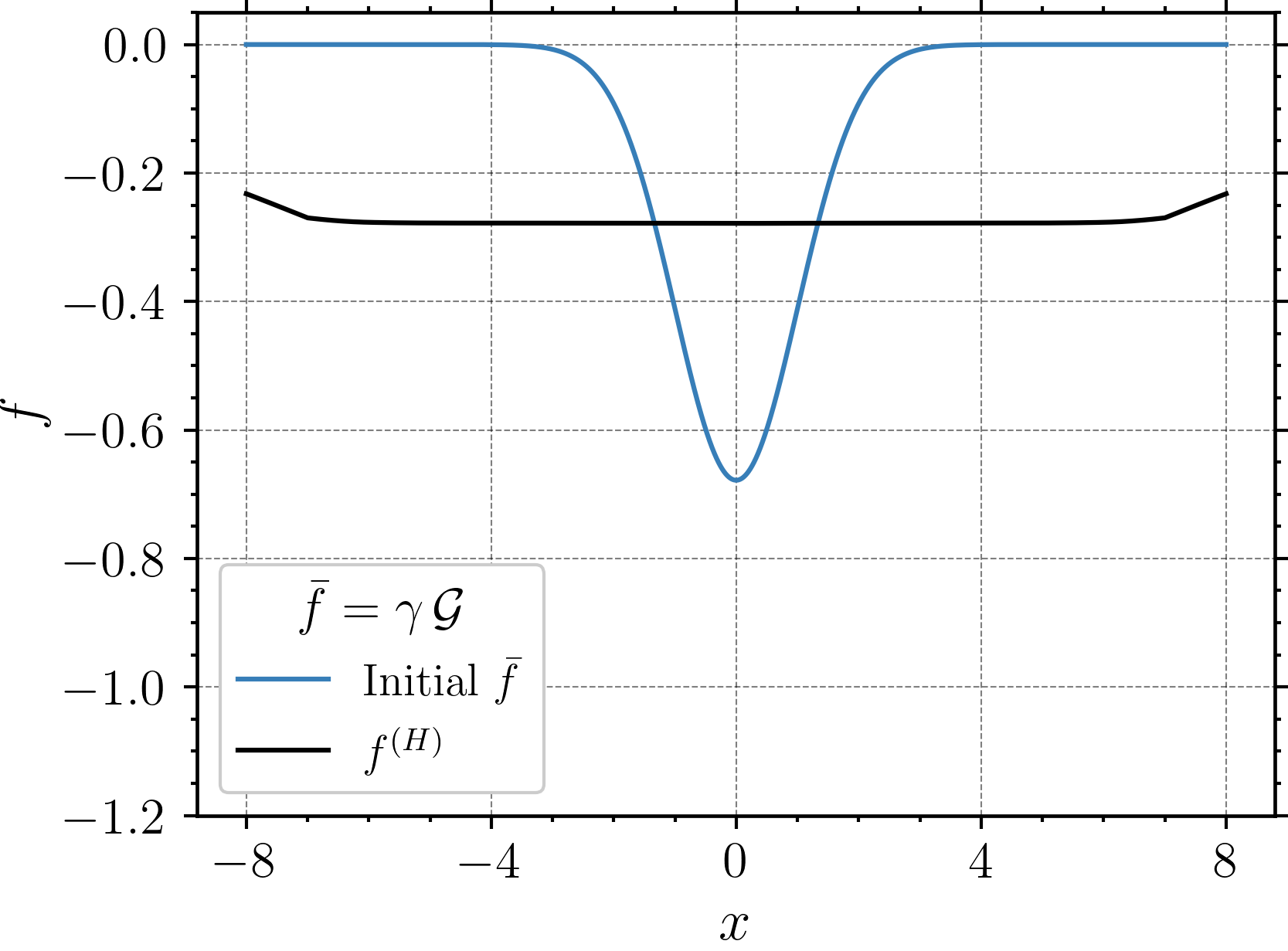} % Replace with your first figure file
        \subcaption{$\gamma=-1.7$}
        \label{fig:sub-GF_P1_2}
    \end{minipage}
    
    % Unified caption
    \caption{Fig.~(a) and Fig.~(b) (drawn on same scale) are produced using the step-size controlled N-R with scaled Gaussian profiles (by factor $\gamma$)  set as base states. Convergence results w.r.t mesh refinement can be found in App.~\ref{app:figures}: Fig.~\ref{fig:CGF_P1}.} 
    \label{fig:GF_P1}
\end{figure}

\vspace{12pt}

\noindent {\bf \underline{Sinusoidal base states}:}

\noindent Unlike a simple N-R scheme, Alg.~\ref{algo} also allows us to pick up dual solutions using a truncated, smoothed sinusoidal base state of generated from
\begin{equation}
\label{eq:sine}
    \bar{f} = \begin{cases}
        \sin{(\omega x)}  & \,\text{for } -2 \pi <x< 2\pi \\ 0
        & \,\text{otherwise} 
\end{cases},
\end{equation}
where the kinks at the sharp transitions at \( x = \pm 2\pi \) are smoothed out while preserving the overall shape of the profile.
 For $\omega=0.5,1$ and $\omega=2$,  the corresponding primal fields with mesh of $6400$ elements are shown in Fig.~\ref{fig:GF_P2} and the corresponding profiles on mesh refinement can be found in App.~\ref{app:figures}: Fig.~\ref{fig:GFC_P2}. It is evident from these results that the dual scheme prefers to pick up constant primal fields for higher frequency base states.

\begin{figure}[h!]
    \centering
    % First subfigure
    \begin{minipage}[b]{0.45\textwidth}
        \centering
        \includegraphics[width=\textwidth]{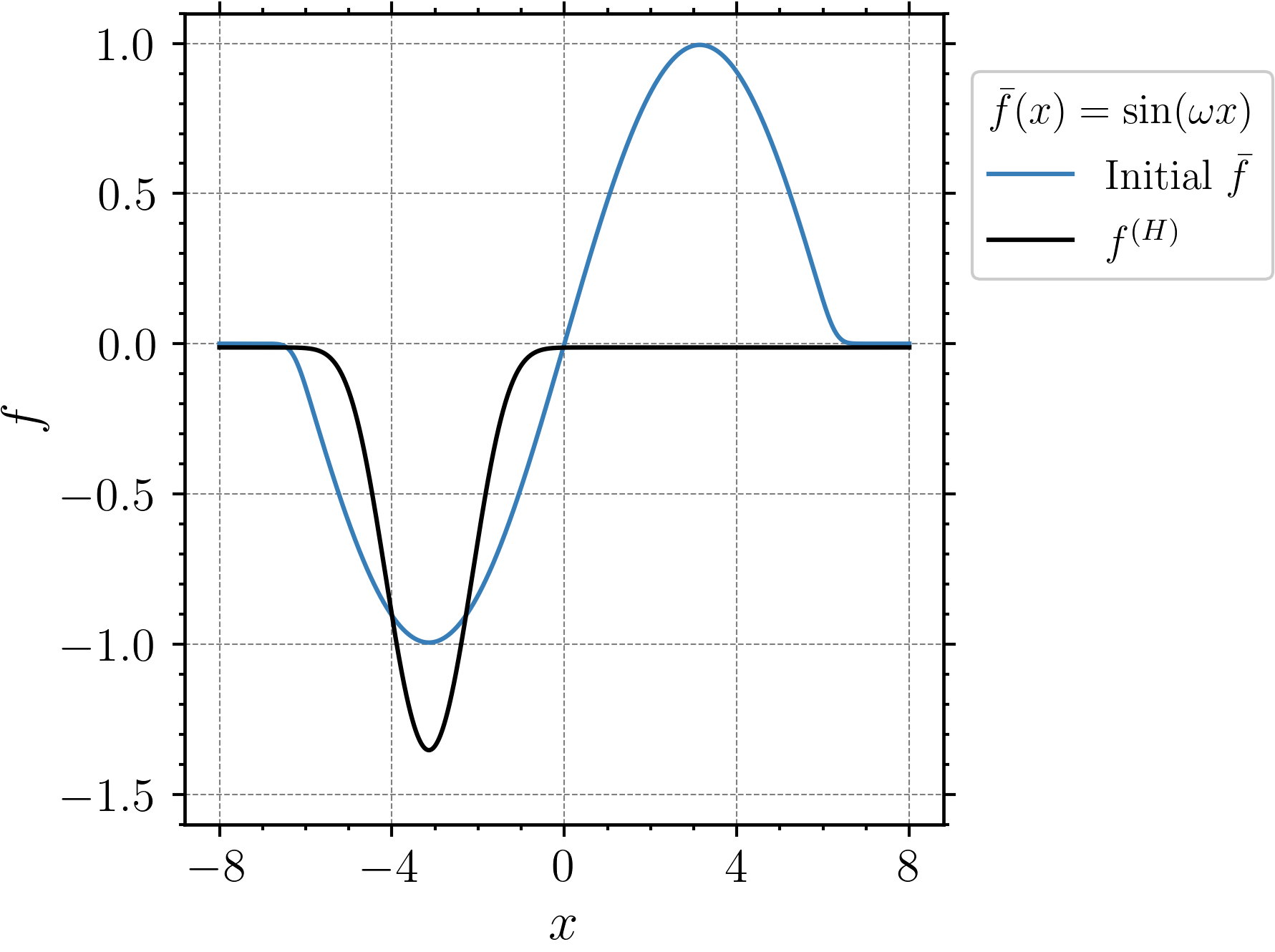} % Replace with your first figure file
        \subcaption{$\omega=0.5$}
        \label{fig:sub-GF_P2_1}
    \end{minipage}
    \hspace{0.05\textwidth}
    % Second subfigure
    \begin{minipage}[b]{0.45\textwidth}
        \centering
        \includegraphics[width=\textwidth]{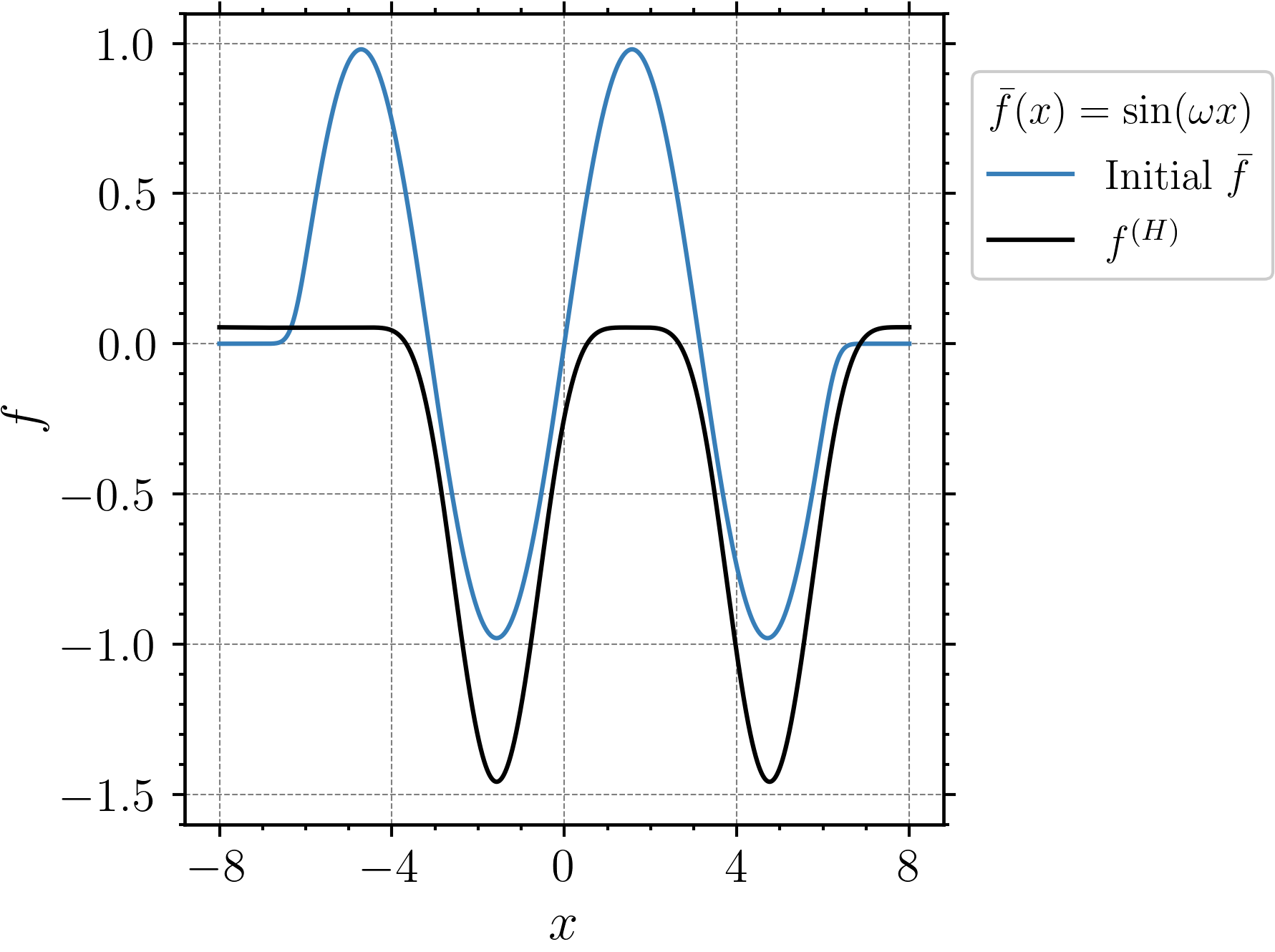} % Replace with your first figure file
        \subcaption{$\omega = 1$}
        \label{fig:sub-GF_P2_3}
    \end{minipage}

    \vspace{0.5cm}

    % Third subfigure
    \begin{minipage}[b]{0.45\textwidth}
        \centering
        \includegraphics[width=\textwidth]{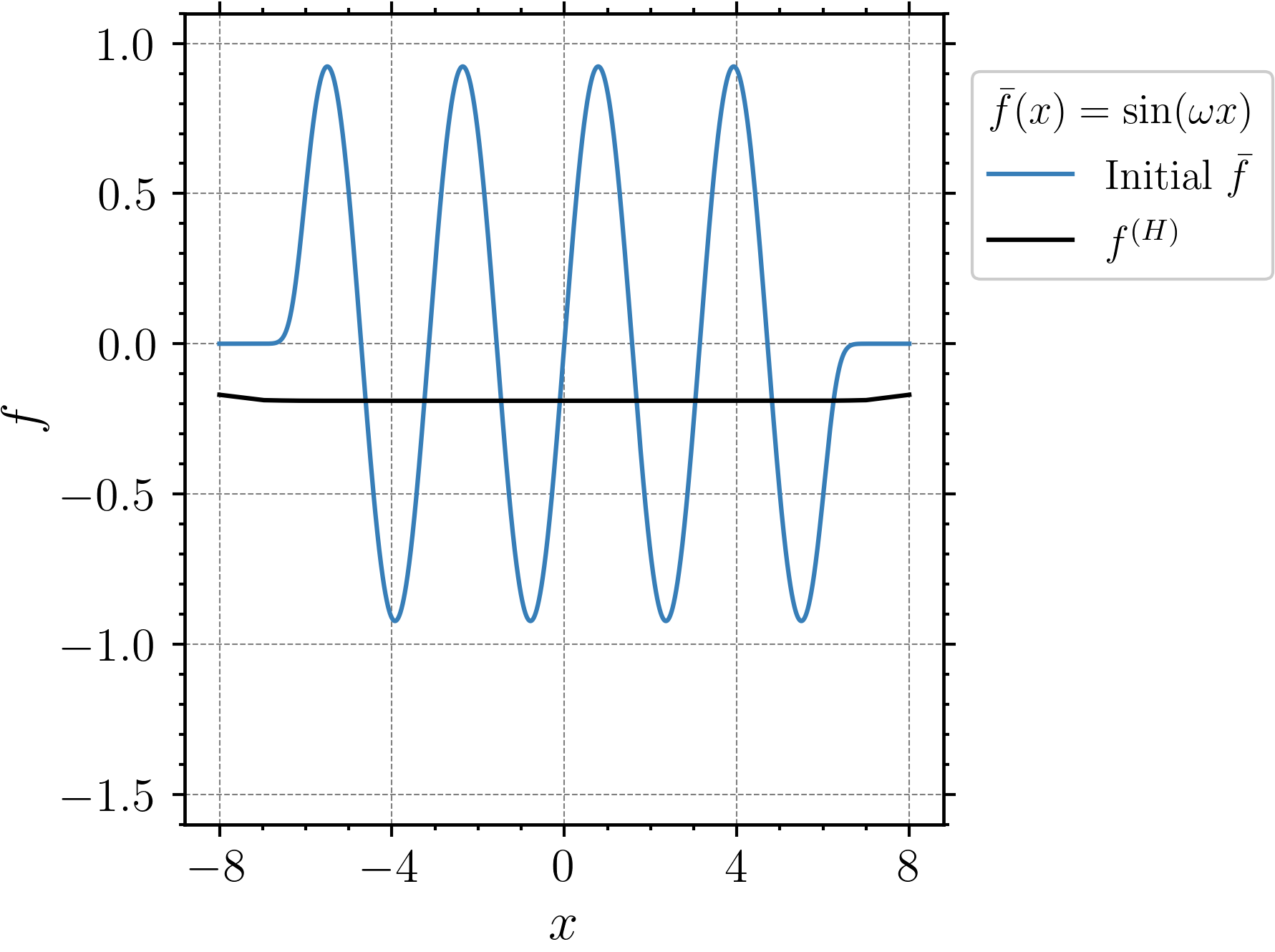} % Replace with your first figure file
        \subcaption{$\omega = 2$}
        \label{fig:sub-GF_P2_5}
    \end{minipage}
    \hspace{0.05\textwidth}
    % Second subfigure
    \begin{minipage}[b]{0.45\textwidth}
        \centering
        \includegraphics[width=\textwidth]{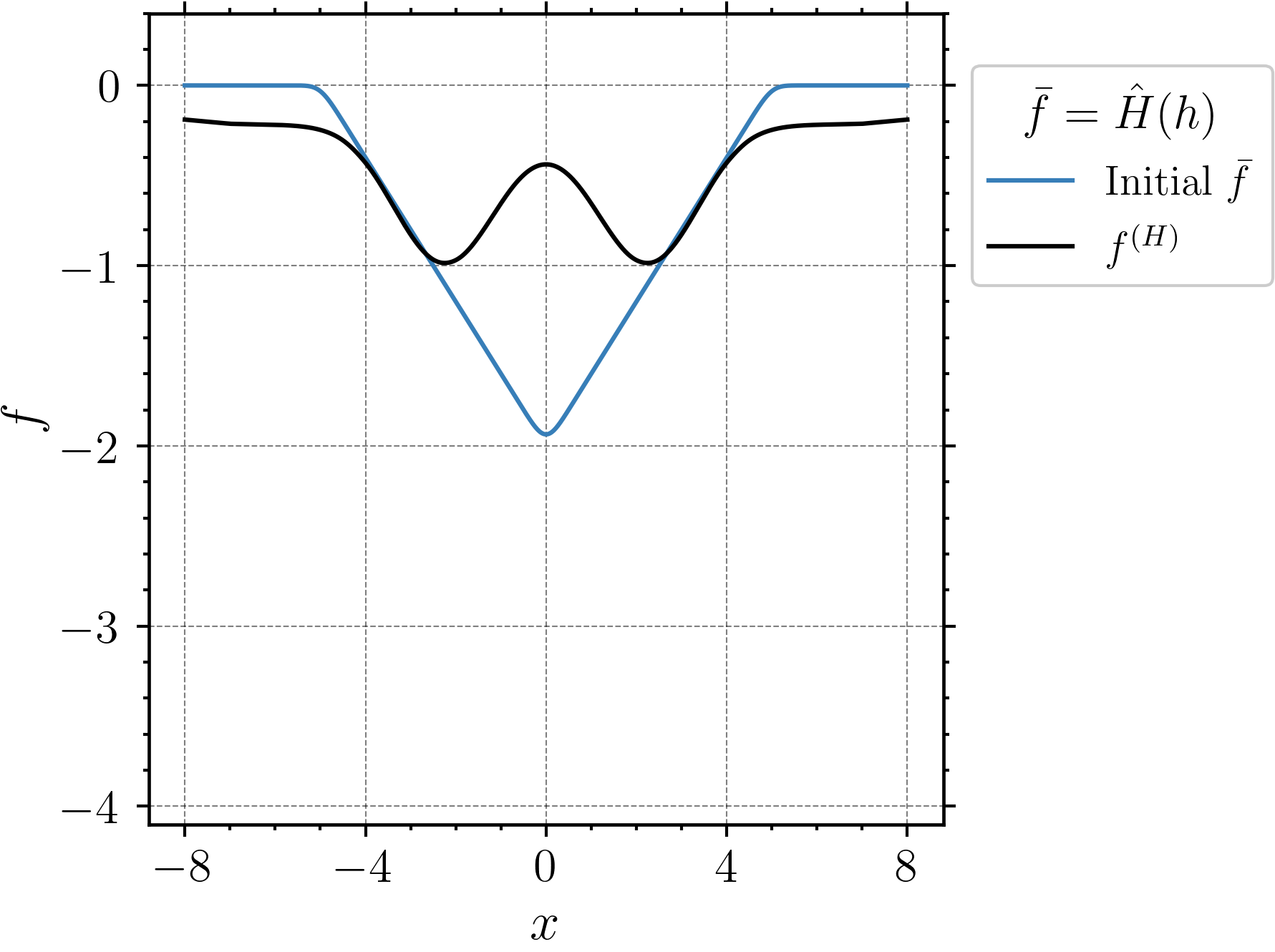} % Replace with your first figure file
        \subcaption{$h=-0.4$}
        \label{fig:sub-GF_P3_1}
    \end{minipage}
    
    \vspace{0.5cm}  

    \begin{minipage}[b]{0.45\textwidth}
        \centering
        \includegraphics[width=\textwidth]{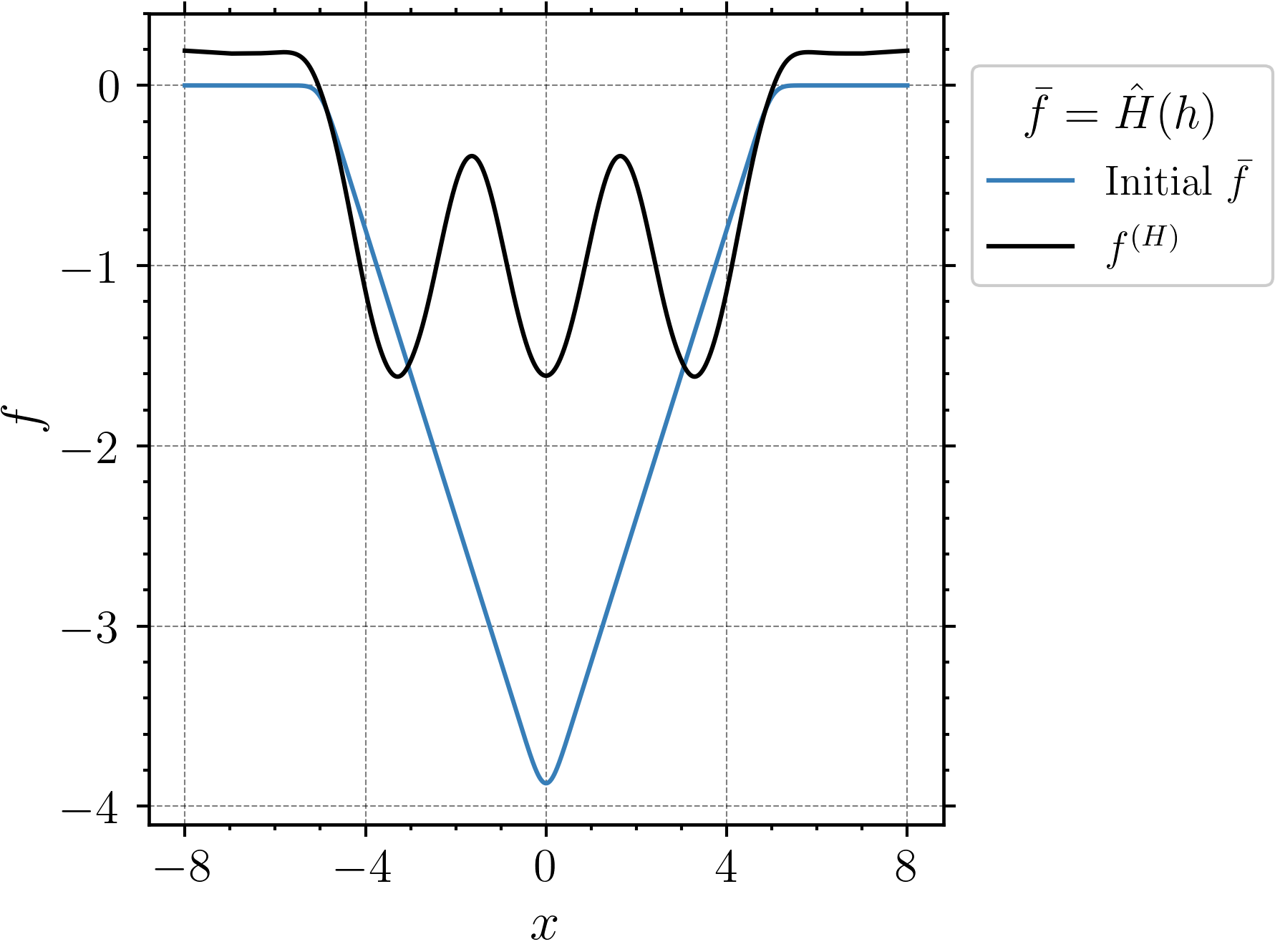} % Replace with your first figure file
        \subcaption{$h=-0.8$}
        \label{fig:sub-GF_P3_3}
    \end{minipage}
    \hspace{0.05\textwidth}
    % Second subfigure
    \begin{minipage}[b]{0.45\textwidth}
        \centering
        \includegraphics[width=\textwidth]{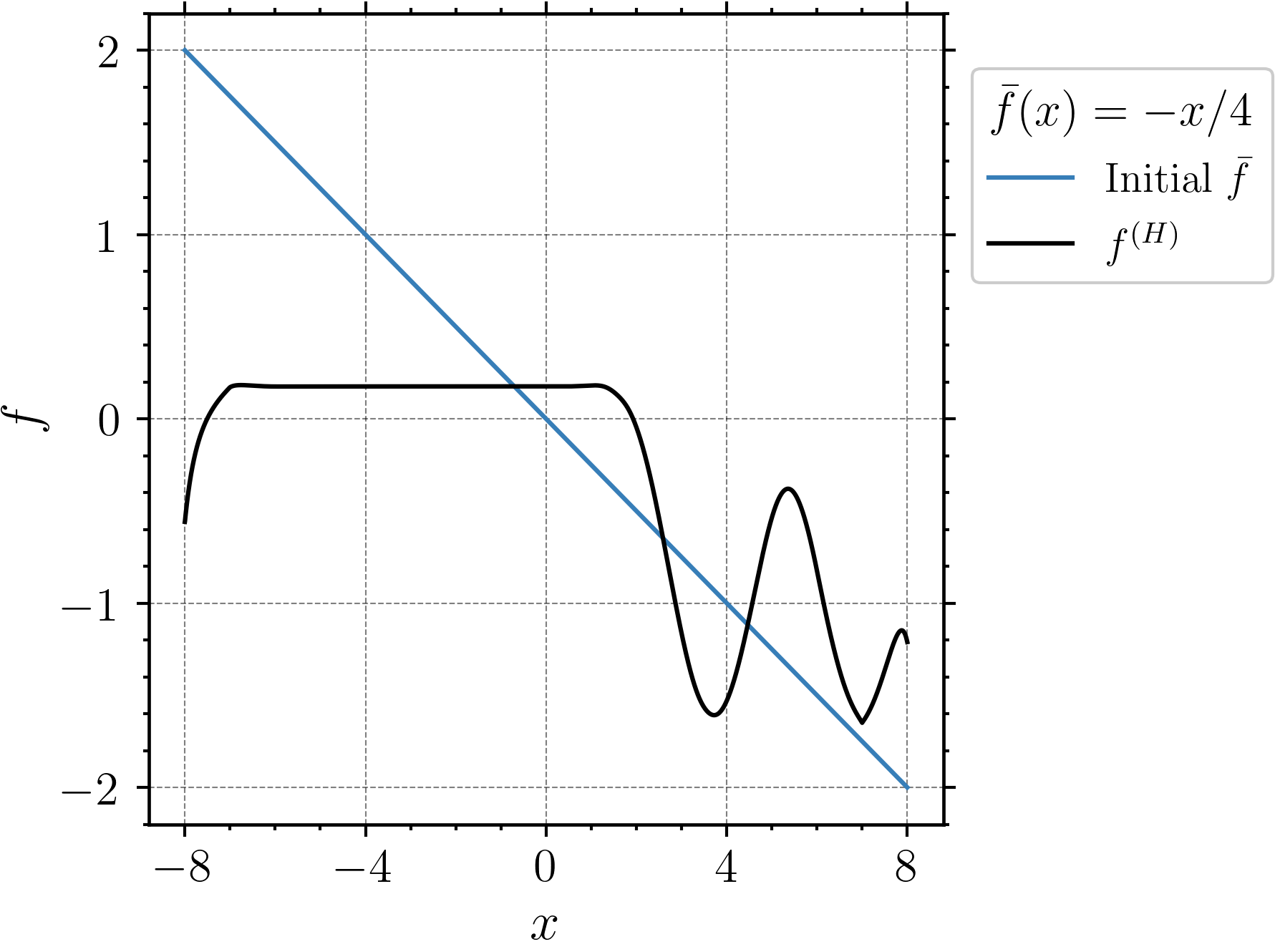} % Replace with your first figure file
        \subcaption{ $\bar{f} = -x/4$}
        \label{fig:sub-GF_P4_1}
    \end{minipage}
    \vspace{0.5cm}  
    
    % Unified caption
    \caption{Fig.~(a), Fig.~(b) and Fig.~(c) use a sinusoidal base state \eqref{eq:sine} (parameter: $\omega$). Fig.~(d) and Fig.~(e) use a negative-smoothened hat-type initial base state \eqref{eq:hat} (parameter: $h$). Fig.~(f) uses a linear profile \eqref{eq:linear}. Results are produced using a step-size controlled N-R. Convergence results w.r.t mesh refinement for the above examples can be found in App.~\ref{app:figures}: \ref{fig:GFC_P2}.} 
    \label{fig:GF_P2}
\end{figure}

\vspace{12pt}
\noindent {\bf \underline{Piecewise Linear functions as base states}:}

\noindent In this section, we employ  piecewise linear functions as base states for the Alg.~\ref{algo}. We start with a negative smoothed hat function generated from
\begin{equation}
\label{eq:hat}
    \bar{f} = \hat{H}(h) := \begin{cases}
        -(x-5)\,h  & \,\text{for } 0 <x\leq 5 \\
        (x+5)\,h  & \,\text{for } -5  <x\leq 0 \\0
        & \,\text{otherwise} 
\end{cases}; \quad h<0
\end{equation}
with smoothed out the kinks between its piecewise linear segments.
Based on the results obtained using the dual scheme for two different heights $h$, as shown in Fig.~\ref{fig:sub-GF_P3_1} and Fig.~\ref{fig:sub-GF_P3_3}, it is evident that the negative peak disperses into several small humps. Mesh refinement profiles are shown in App.~\ref{app:figures}: Fig.~\ref{fig:GFC_P2}

As a final test, we use linear profiles across the entire domain as the base state. Since no external boundary conditions (from the primal problem description) are imposed on the problem, this test aims to evaluate how the method handles the problem, when non-uniform base states are employed near the boundary.

We adopt the following base state
\begin{equation}
\label{eq:linear}
    \bar{f} = -\frac{x}{4}.
\end{equation}
Evident from the result  for this setup (Fig.~\ref{fig:sub-GF_P4_1}, mesh refinement results shown in  App.~\ref{app:figures}: Fig.~\ref{fig:sub-GF_P4_2}), the $f$ profile exhibits a dip near the boundary, for the problem solved with the given boundary conditions on the dual field.  However, the primal field satisfies \eqref{eq:primal} up to the prescribed $tol = 10^{-13}$ on each of the meshes.

%\newpage %temporary

\subsubsection{Dispersive solitons and their disintegrations} \label{sec:dispersive_DDE_example}

We consider the scaled 
$\mathcal{PV}$ profile with an added constant $c$, and define it as the base state: 
\begin{equation*}
    \bar{f}(x) = \tilde{\alpha} \, \mathcal{PV}(x) + c.
\end{equation*}
 Fig.~\ref{fig:PV_waves} shows the result obtained for such a setup. As we try to move away from the $\mathcal{PV}$ solution by increasing $\tilde{\alpha}$ and $c$, we start capturing solutions formed at larger $u_\infty$ values which exhibit the emergence of modulating envelopes around the central peak, representing the characteristics of a solitary wave structure. Fig.~\ref{fig:sub-PV_waves_1} shows one such structure obtained using the following parameters: $L=38$, $\tilde{\alpha}=3$ and $c=2$. The solution reaches an approximate value of $u_\infty = 1.87$. Primal profiles obtained w.r.t mesh refinement are presented in Fig.~\ref{PV_waves_refine1}.
 
 Pursuing a higher $u_\infty$ value by adjusting $\tilde{\alpha}$ and $c$ leads to a breakdown, such that the obtained pattern exhibits a dispersive profile throughout the domain without any ostensible compact support (here, compact support refers to the profile after subtracting $u_\infty$) and has a smooth central dip, as shown in Fig.~\ref{fig:sub-PV_waves_3}. This result was obtained using the following parameters: $ L=38$, $\tilde{\alpha}=4$ and $c=3$ and we will refer to this example as a disintegrated soliton (d-soliton).  This result also matches well with the proposition \ref{prop:invertible}, where the operator $I + u_\infty K$ loses its invertibility upon pursuing $u_\infty>2.301$ approximately and we do not obtain a solitary wave structure. Primal profiles obtained on refinement are presented in Fig.~\ref{fig:PV_waves_refine2}.

For quantitative comparisons of convergence w.r.t mesh refinement on par with the other computed examples, results for the dispersive soliton and the disintegrated soliton (d-soliton) are presented on a domain of $L=8$ in Tables \ref{table:err} and \ref{table:convergence}.

\begin{figure}
    \centering
    % First subfigure
    \begin{minipage}[b]{0.45\textwidth}
        \centering
        \includegraphics[width=\textwidth]{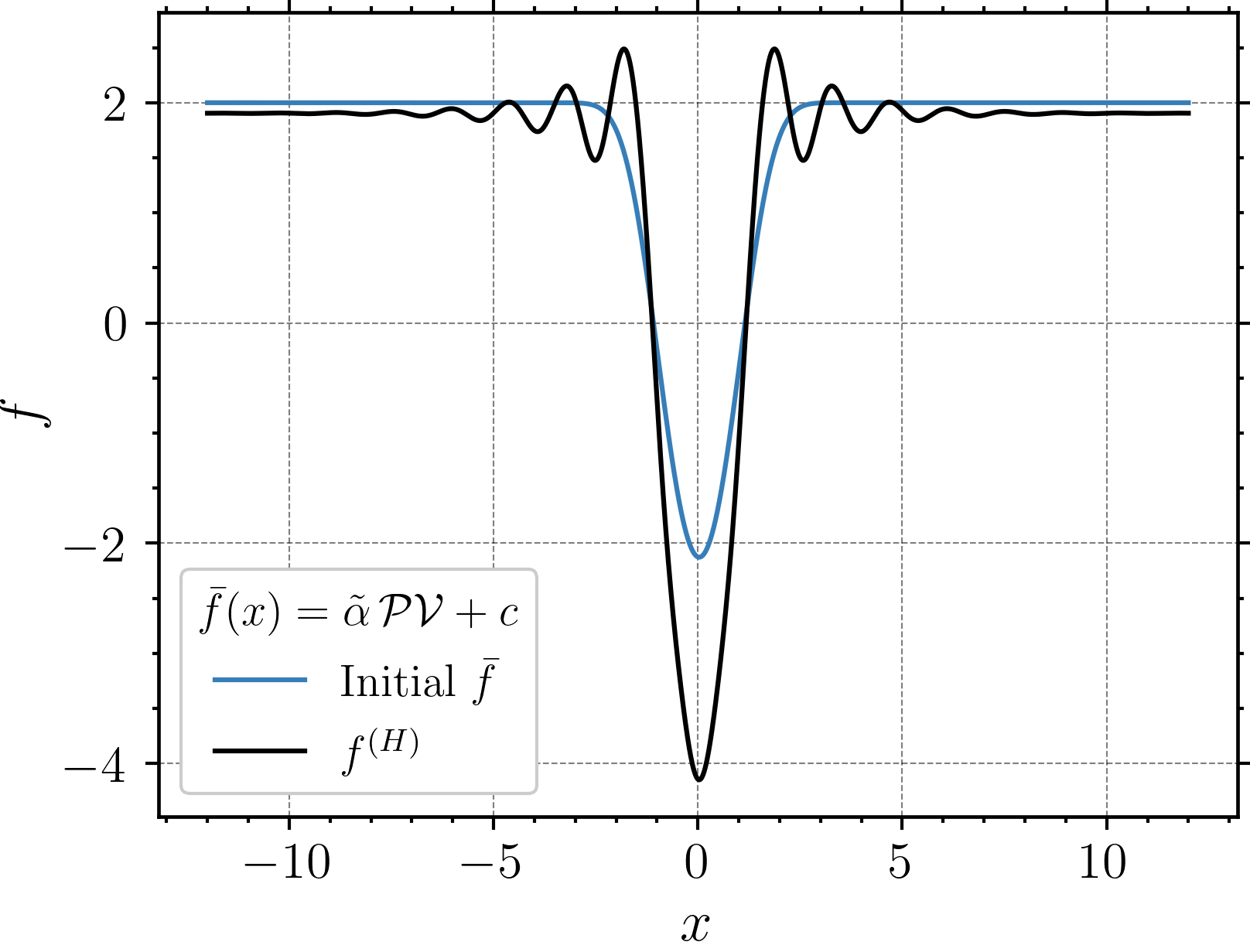} % Replace with your first figure file
        \subcaption{$\tilde{\alpha}=3,c=2$}
        \label{fig:sub-PV_waves_1}
    \end{minipage}
    \hspace{0.05\textwidth}
    % Second subfigure
    \begin{minipage}[b]{0.45\textwidth}
        \centering
        \includegraphics[width=\textwidth]{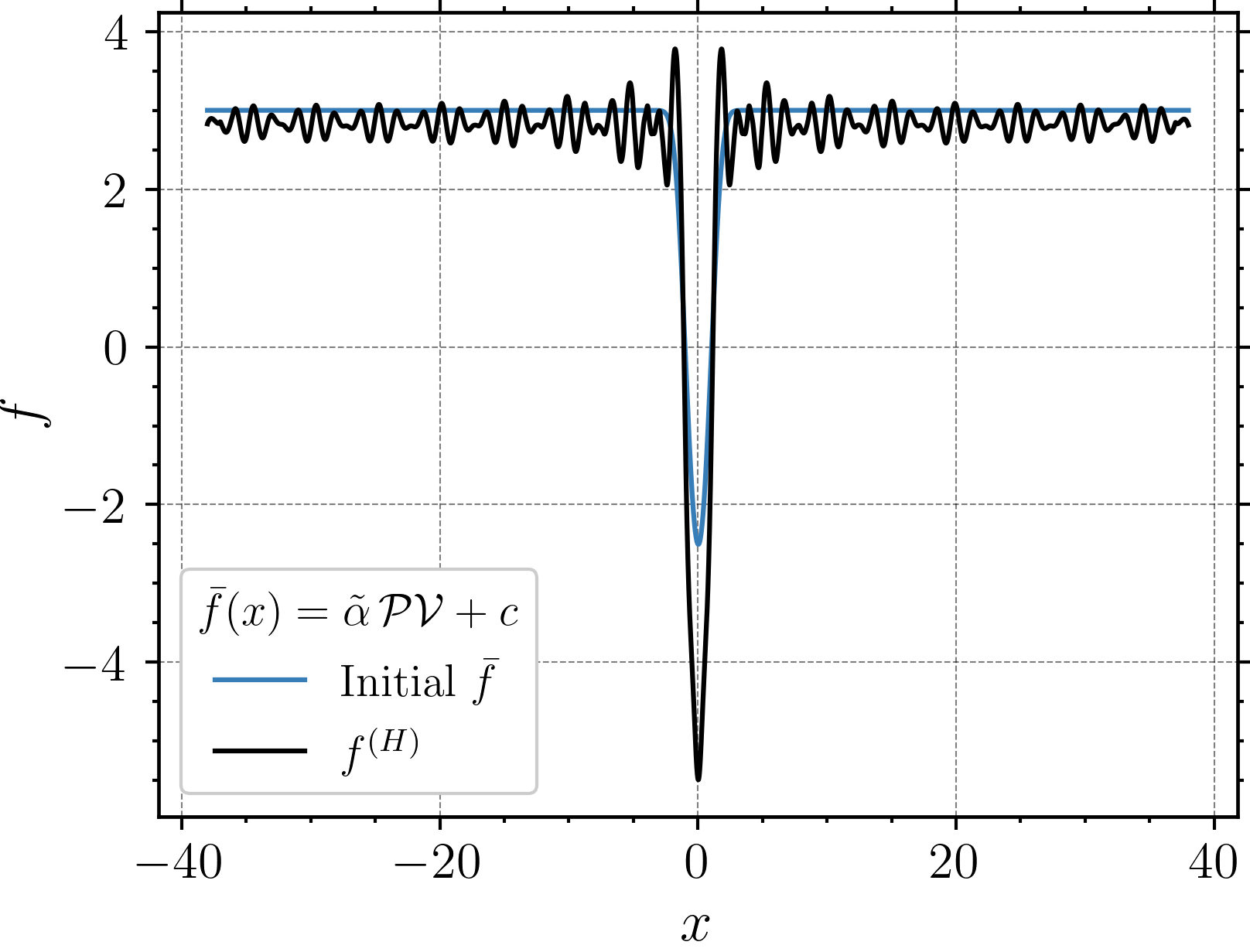} % Replace with your first figure file
        \subcaption{$\tilde{\alpha}=4,c=3$}
        \label{fig:sub-PV_waves_3}
    \end{minipage} 

    % Third subfigure
     
    % Unified caption
    \caption{Both the plots are produced using scaled $\mathcal{PV}$ profiles added to constants set as base states on $L=38$ and a smaller domain in Fig.~(a) is shown for better demonstration purposes. Fig.~(a) displays a solitary wave structure and as one moves outside the shown region, $f^{(H)}$ approximates to a constant value. Fig.~(b) shows the case where the dual scheme pursues a $u_\infty$ greater than the allowed invertibility limit in proposition \ref{prop:invertible}, leading to a disintegration of the solitary wave.} 
    \label{fig:PV_waves}
\end{figure}

\section{Approximation and numerical examples for the NIE formulation}

%%
%\newpage
\subsection{Approximation for the NIE formulation}\label{sec:approx_NIE}

To approximate solutions of the NIE \eqref{eq:int_wform}, we discretize
the functional $\Snu[\nu]$ as given in \eqref{eq:Snu_finite} 
by left-endpoint-rule quadrature.
This yields a  spectrally accurate approximation for smooth $2L$-periodic functions $\nu$
represented by the vector $\vec{\nu}=(\nu_j)$ of their values 
$\nu_j=\nu(x_j)$ on a uniform grid of $N$ points $x_j = jh$ with grid spacing $h = 2L/N$.
The integrals in the definition of $K$ from \eqref{d:Knu} are approximated at $x_j$ by 
the trapezoid rule, and calculated using the discrete Fourier transform.
Thus the values $K\nu(x_j)$ are approximated by the components $\mK\vec\nu_j$ of $\mathsf{K}\vec\nu$,
where $\mathsf{K}=(K_{jk})$ is an $N\times N$ symmetric banded Toeplitz matrix 
whose nonzero entries are $h$ or $h/2$. 

With this notation, our approximation to $\Snu[\nu]$ is given by 
\begin{equation}
  \label{eq:Snu_h}
\Snu_h(\vec\nu) = -\frac12 \sum_{j=1}^N (\ma +\mK\vec\nu_j) \hat w_j^2\, h ,
\qquad
 \hat w_j  =  \frac{\mathsf{a} \bar w_j-\nu_j-\uinf \mK\vec\nu_j}{\mathsf{a} +\mK\vec\nu_j} 
 \,,
\end{equation}
when the strict convexity condition $\ma+\mK\vec\nu_j>\eps_a$ holds, for some specified tolerance $\eps_a>0$.
When this condition fails, we set $\Snu_h(\vec\nu)$ to be some large negative value. 
The gradient of $\Snu_h$ is explicitly given as
\begin{equation}\label{eq:gSnu_h}
    \frac{\D \Snu_h}{\D \nu_j} = h\rho_j \,, \qquad 
    \rho_j = \hat w_j + \sum_{k=1}^N K_{jk}(\uinf \hat w_k + \tfrac12\hat w_k^2) \,.
\end{equation}
We minimize $-\Snu_h$ using standard optimization software to determine
an approximate minimizer $\vec{\nu}=(\nu_j)$. Provided the strict convexity condition holds,
this determines an approximate solution $(\hat w_j)$ to \eqref{eq:int_wform} 
since the gradient $\D\Snu_h/\D\vec{\nu}$ is small.

The Hessian of $\Snu_h$ has the matrix entries
\begin{equation} \label{eq:Hess_h}
   \frac{\D^2\Snu_h}{\D\nu_j\D\nu_k}(\vec\nu)  
   = -h \sum_{j=1}^N
   \frac{(\delta^i_j+ K_{ij}\hat f_j)(\delta^j_k+\hat f_j K_{jk})}{\ma+\mK\vec\nu_j}\,,
   \qquad
\hat f_j = \uinf+\hat w_j. 
\end{equation}
For later reference, we note that when $\vec\nu=0$ we have $\hat w_j=\bar w_j$,
and the second variation of the functional $\ma \Snu[\nu]$ at $\nu=0$ 
(which is independent of the parameter $\ma$) 
is approximated by the matrix 
\begin{equation}\label{d:mM}
    \mathsf{M}(\bar w) = \frac{\ma}h\left( \frac{\D^2\Snu_h}{\D\nu_j\D\nu_k}(0)  \right)
    = -(I+\mathsf{K}\bar{\mathsf{F}})(I+\bar{\mathsf{F}}\mathsf{K})\,,
\qquad \bar{\mathsf{F}}=\diag(\uinf+\bar w_j).
\end{equation}

In order to carry out numerical computations for a range of values of $\uinf$ while maintaining the strict convexity condition, 
we implement a primitive kind of path-following method. 
We start with $\uinf=0$ and take the base state $\bar w$ 
to be a numerical solution to \eqref{eq:int_wform} computed by Petviashvili iteration, 
as described below and in \cite{Ingimarson_2024}.  
Then we change $\uinf$ in small increments, changing the base state $\bar w$ 
to be the approximate solution $\hat w$ obtained numerically for the previous value of $\uinf$.  
In this way the values of $\nu_j$ can be kept small and the convexity condition 
$\ma + \mK\vec\nu_j>\eps_a$ maintained. 

\subsection{Numerical results for the NIE formulation}
\label{sec:numerics_NIE}

The computations in this section are performed with $\ma=10$.
We minimize $-\Snu_h[\vec\nu]$ using the BFGS algorithm as implemented in the julia package Optim.jl \cite{mogensen2018optim}. 
We take $L=25$ and discretize using $N=1000$ so $h=2L/N=0.05.$
The stopping criterion is based on the maximum norm of the gradient, with tolerance $2\cdot 10^{-9}$.

We present two sets of solutions computed for various values of $\uinf$. We start
with $\uinf=0$ in each case, with base state $\bar w = \bar f$ as computed 
using 50 steps of Petviashvili iteration, similarly as in section \ref{sec:prior_knowledge}.
For succeeding values of $\uinf$ we then reset the base state $\bar w$ to be the 
solution $\hat w$ last computed, as described above in section~\ref{sec:approx_NIE}.
We note that in all cases the residual norm $\|\vec\rho\|_{\infty}<10^{-7}$.

The first set of solutions is computed for nonpositive values of $\uinf$ ranging from 0
down to $-0.475$, a value slightly above the lower threshold $\uinf=-0.5$ 
at which the operator $I+\uinf K$ first loses invertibility on the infinite line 
according to proposition~\ref{prop:invertible}.
Results for these solutions appear in Fig.~\ref{fig:nie_1}.
In Fig.~\ref{fig:nie_1a} we plot the numerically determined wave shapes $\hat f = \uinf + \hat w$ vs. $x$
for 8 successively decreasing values of $\uinf$ ranging from $0$ to $-0.475$,
and in Fig.~\ref{fig:nie_1b} we plot $\log_{10}|\hat w|$ vs. $x$.

We compute a second set of solutions for positive values of $\uinf$ ranging from 0.5 to 2.0, 
a value somewhat less than the upper threshhold $\approx 2.30167$ 
at which the operator $I+\uinf K$ first loses invertibility on the infinite line 
according to proposition~\ref{prop:invertible}.
The results for $\hat f = \uinf+\hat w$ vs. $x$ are shown in Fig.~\ref{fig:nie_2a}, and
we plot $\log_{10}|\hat w|$ vs. $x$ in Fig.~\ref{fig:nie_2b}.

In Table~\ref{table:NIE1}, for selected solutions in both sets we tabulate 
the lowest four non-negligible eigenvalues $\kappa_j$ of the matrix $-\ma D^2\Snu_h[0]$
(with base state set as $\bar w = \hat w$).
This is the negative Hessian scaled by $\ma$, i.e., 
the negative Hessian scaled to be independent of $\ma$.
In all cases  the first eigenvalue satisfied $|\kappa_1|<2\cdot10^{-15}$.
We tabulate as well as the minimum of the (continuous) spectrum of the operator 
$(I+\uinf K)^2$ corresponding to the `spectrum at infinity' of the scaled second variation
$-\ma\delta^{(2)}\Snu[0]$. According to the Fourier analysis in section~\ref{ss:NIEanalysis},
this value is given by 
\begin{equation}
    \min\sigma_c = \begin{cases}
        (1-\sigma_0\uinf)^2 & \text{if $\uinf>0$},\\
        (1+2\uinf)^2 & \text{if $\uinf\le0$},
    \end{cases}
    \qquad
-\sigma_0=\min_{\xi\in\R} 2\sinc\xi \approx -0.434467.
\end{equation}

The results in both sets of solutions appear consistent with the possibility
that periodic and solitary waves exist on the line for $\uinf$ in the whole range from $-0.5$ 
to at least $2.2$, consistent with 
the phase-speed non-matching condition mentioned in the introduction 
and with the coercivity result from propositions~\ref{prop:coercive} 
and~\ref{prop:invertible}.
For $\uinf$ between $-0.5$ and $0$, the wave perturbation $\hat w$ has a single hump
shape, monotonic for $0<x<L$. Log plots suggest that values of $|\hat w|$ smaller
than about $10^{-6}$ are not computed accurately with the precision 
and tolerances that were used.

For $\uinf\ne0$,
the values of $|\hat w|$ decay toward zero at an exponential rate that depends
upon $\uinf$ and diminishes as $\uinf$ approaches $-0.5$. 
This regime, where $2\uinf\to-1$, is where we can expect the KdV approximation to be valid,
in fact---the wave amplitude becomes small and wave length becomes large.

For $\uinf=0$, on the other hand, the wave profile $\hat w=\hat f$ 
may decay to zero at a rate that is faster than exponential. 
This is reminiscent of the solitary wave pulse for a chain
of beads in Hertz contact, which was shown in \cite{EnglishPego2005} to decay
at a rate that is faster than double exponential.

For the positive values of $\uinf$ between $0.2$ and $2.2$, on the other hand,
the computed wave perturbations $\hat w$ decay toward zero in an oscillatory, sign-changing way.
The decay rate of the envelope diminishes as $\uinf$ approaches the upper threshold
near $2.3$.  
The oscillation frequency appears well approximated by the value $\xi_*\approx 4.4934$
which minimizes $\sinc \xi$ and at which the phase velocity matches the group velocity 
of harmonic waves.  This corresponds to the regime investigated in the general study by 
Kozyreff~\cite{Kozyreff2023}.

%---------
%NIE figures
\begin{figure}
    \centering
    % First subfigure
    \begin{minipage}[b]{0.45\textwidth}
        \centering        \includegraphics[width=\linewidth]{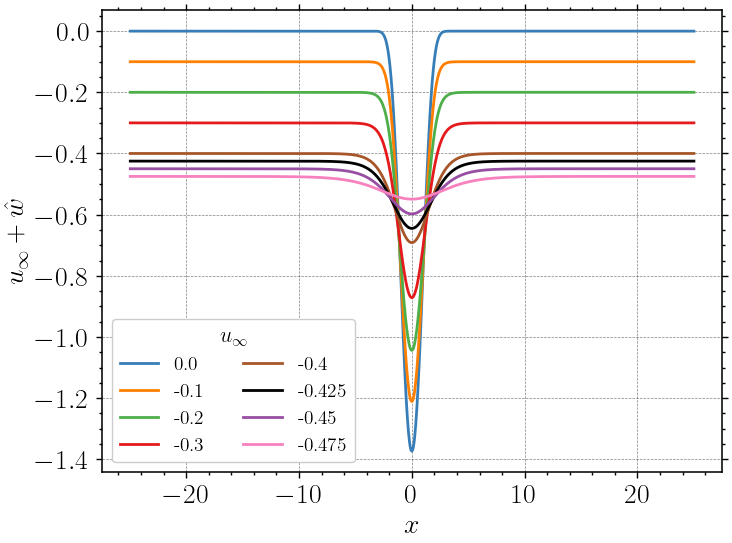}  
\subcaption{Solutions $\uinf+\hat w$ vs. $x$}
\label{fig:nie_1a}
    \end{minipage}
    \hspace{0.05\textwidth}
    % Second subfigure
    \begin{minipage}[b]{0.45\textwidth}
        \centering        \includegraphics[width=\textwidth]{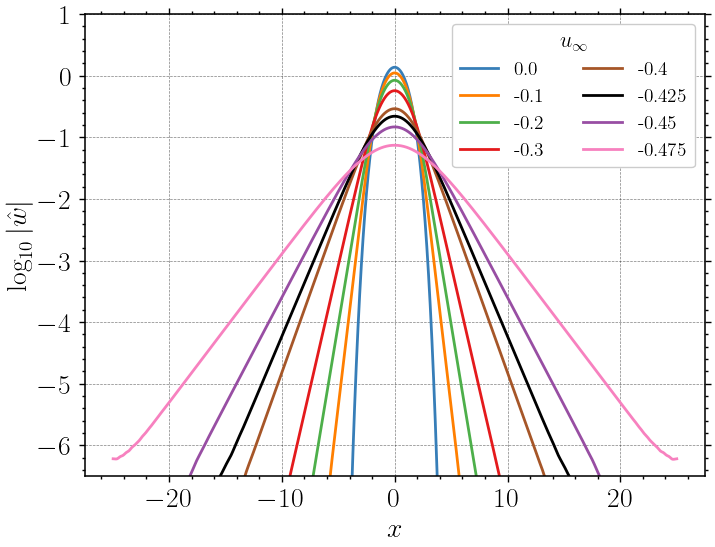} % Replace with your first figure file
        \subcaption{$\log_{10}|\hat w|$ vs. $x$}
        %\subcaption{Dual fields $\nu$ vs. $x$}
        \label{fig:nie_1b}
    \end{minipage}
    
    % Unified caption
    \caption{Solutions and dual fields for $\uinf = 0.0, -0.1, -0.2, -0.3, -0.4, -0.425, -0.45, -0.475$.}
    \label{fig:nie_1}
\end{figure}

\begin{figure}
    \centering
    % First subfigure
    \begin{minipage}[b]{0.45\textwidth}
        \centering        \includegraphics[width=\linewidth]{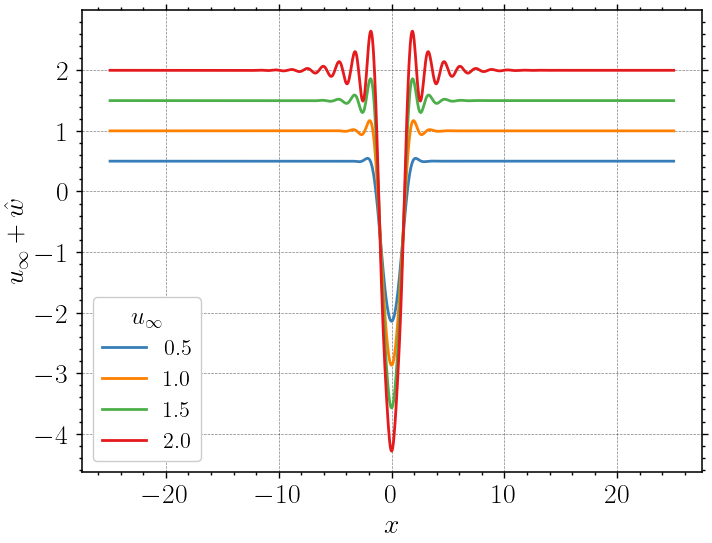}  
\subcaption{Solutions $\uinf+\hat w$ vs. $x$}
\label{fig:nie_2a}
    \end{minipage}
    \hspace{0.05\textwidth}
    % Second subfigure
    \begin{minipage}[b]{0.45\textwidth}
        \centering        \includegraphics[width=\textwidth]{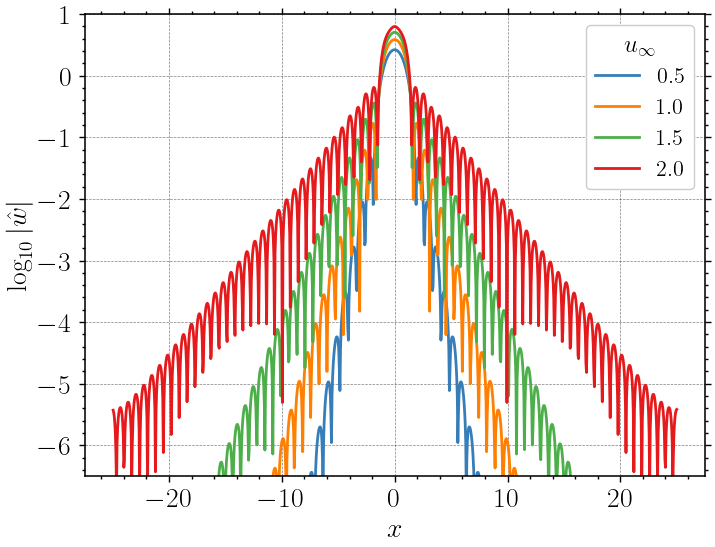} % Replace with your first figure file
        \subcaption{$\log_{10}|\hat w|$ vs. $x$}
        %\subcaption{Dual fields $\nu$ vs. $x$}
        \label{fig:nie_2b}
    \end{minipage}
    
    % Unified caption
    \caption{Solutions for $\uinf = 0.5, 1.0, 1.5, 2.0$.}
    \label{fig:nie_2}
\end{figure}

%--------NIE table
\begin{table}[ht]
\centering
\begin{tabular}{c|c|c|c|c|c|c|c}
$\uinf$ &  $\kappa_2$ & $\kappa_3$ & $\kappa_4$ & $\kappa_5$ & $\min\sigma_c$ \\
%$\uinf$ & $\|\vec\rho\|_\infty$ &  $\kappa_1$ & $\kappa_2$ & $\kappa_3$ & $\kappa_4$ & $\kappa_5$ & $\min\sigma_c$ \\
    \hline
% Table: uinfty, norm(rho,Inf), β*Hx[1:5], infspectrum
 -0.45 &  0.00525 & 0.01008 & 0.01017 & 0.01155 & 0.01000 \\
 -0.40 &  0.01940 & 0.03960 & 0.04027 & 0.04210 & 0.04000 \\
 -0.30 &  0.06512 & 0.15327 & 0.16035 & 0.16209 & 0.16000 \\
 -0.20 &  0.12258 & 0.32787 & 0.36029 & 0.36172 & 0.36000 \\
 -0.10 &  0.18357 & 0.54470 & 0.63819 & 0.64042 & 0.64000 \\
 0.00 &  0.24398 & 0.64190 & 0.64190 & 0.78068 & 1.00000 \\
 0.50 &  0.48865 & 0.55764 & 0.55764 & 0.61489 & 0.61272 \\
 1.00 &  0.32294 & 0.32296 & 0.32611 & 0.32613 & 0.31983 \\
 1.50 &  0.12421 & 0.12423 & 0.12718 & 0.12721 & 0.12131 \\
 2.00 &  0.01867 & 0.01869 & 0.02025 & 0.02031 & 0.01718 \\
\end{tabular} 
\caption{Lowest eigenvalues of $-\ma D^2\Snu$, and bottom of continuous spectrum. 
$|\kappa_1|<2\cdot 10^{-15}$ in all cases.}
\label{table:NIE1} 
\end{table}

%\newpage %temporary
\subsection{Usage and failure of Petviashvili iteration}

As mentioned in the previous subsection, using the
optimization package Optim.jl in julia we found approximate solutions $(\hat w_j)$
to equation~\eqref{eq:int_wform} for which the residual $\vec\rho$ from \eqref{eq:gSnu_h}
has norm $\|\vec\rho\|_{\infty}<10^{-7}$ for all the values of $\uinf$ listed in Table~\ref{table:NIE1},
ranging from $-0.45$ to $2.2$.

A natural idea to improve the quality of the numerical solution is by 
post-processing, applying a Petviashvili iteration for several steps, 
starting with the solution found by Optim.jl.  
For solving \eqref{eq:int_wform}, one rewrites the equation in the equivalent form
\begin{equation}
    w(x) = -g(x) , \qquad g(x) = (I+\uinf K)\inv K(\tfrac12 g^2)(x),
\end{equation}
and replaces Step~1 in the iteration scheme \eqref{eq:PVscheme} by 
\begin{equation}
    \text{Step 1':}  \quad \tilde{g}_{n+1}(x) = (I+\uinf K)\inv K(\tfrac12 g_n^2)(x). 
\end{equation}
The operator $(I+\uinf K)\inv K$ is easily discretized and applied using the discrete Fourier transform.

With this method, we found that we could achieve residual norm $\|\vec\rho\|_\infty< 10^{-14}$  
for all values of $\uinf$ attempted ranging from $-0.45$ to $0.7$.
However, the number of iterations required increased greatly for $\uinf$ close to $0.8$,
and the Petviashvili iteration became unstable  and failed to improve the  solution found by Optim.jl,
for $\uinf$ in the range (0.8,2.3). 
This is a subset of the range where localized solutions with oscillatory tails are found.

\section{Discussion}

In this paper, we have taken a variational approach that has been developed
to solve PDEs through duality for convex optimization problems constrained by
field constraints, and extended it to handle two different nonlocal equations
that determine traveling waves for the semi-discrete inviscid Burgers equation:
a nonlinear advance-delay differential-difference equation (DDE) and a
corresponding nonlinear integral equation (NIE).

Particularly, we made extensive use of the flexibility of selecting the base state,
changing it to facilitate the numerical computation of solutions 
in many cases when optimization algorithms produce sequences approaching
the boundary of the functional domain where the objective functional is finite.
For the DDE case, we implemented an algorithm that adaptively adjusts dual field
increments to stay within the finiteness domain, and incorporates base state
resets that have the effect of shifting or reshaping the domain to
allow the search for a solution to continue and not simply stall at the boundary. 
For the NIE case, we used standard software to carry out each optimization
for a sequence of values of the parameter $\uinf$, setting as base state 
the solution found for the previous parameter value. This enables a new solution nearby
to be found with dual field presumably of small amplitude well inside the domain.

The automatic adaptivity built into the DDE code may be responsible for the fact
that it continued to work and produced ``disintegrated'' wave profiles that
appear delocalized and  may be non-periodic, in a parameter regime
($\uinf>2.35$, e.g.) where the periodic NIE code failed and analysis of the
periodic problem indicated difficulties with coercivity.

On the other hand, the periodic NIE code performed much better to find
well-localized (solitary) wave profiles with specified limiting states $\uinf$.
Unfortunately, we lack a convincing explanation for why this should be so.

For the nonlocal problems that we treated, truncation of the wave-profile problem on the
infinite line to a bounded interval requires extended boundary conditions for
base states and dual fields. 
We have done this in different ways for the DDE and NIE mainly as an experiment,
implementing Dirichlet-type conditions for the DDE and periodic conditions for
the NIE.  Switching the treatments is plausibly feasible; 
e.g., for the DDE case one could 
require that the base state $\bar f$ and dual field $\lambda$ be 
extended as periodic outside the interval $(-L,L)$. 
Analytically this is almost equivalent in principle 
to the periodic NIE formulation that we treated, with a subtle
difference, in that periodic variations $\delta\lambda$ in the DDE dual
field would have derivatives $\delta\nu=-\delta\lambda'$ constrained
to have integral zero. This means that solutions found by the 
two schemes might in principle correspond to different constants $C_1$
in \eqref{eq:int_cm1}.

We re-emphasize that in this paper we have focused on the properties of
the variational approach for the nonlocal wave profile problem, and not
on an exhaustive exploration of the family of solutions.
The use of software for continuation and path-following such as AUTO 
or pde2path would plausibly allow one to track branches of solutions and their
possible bifurcations more systematically than we have done. 

Our results nevertheless provide clues about certain parameter regimes 
that appear interesting to examine more closely. 
Because we are unable to guarantee that maximizers of the relevant concave
objective functionals do not lie on the boundary of the finiteness domain,
however,  we have not managed to prove an unconditional existence theorem for 
traveling-wave profiles using either the DDE or NIE formulations. 

The variational approach with base state changes does
suggest a possible avenue towards a convergence proof, though.  
E.g., in $L^2$-gradient flow for a convex functional,
the norm of the gradient is non-increasing in time.  In the problems we treat,
the functional gradient agrees with the equation residual for the DDE or NIE.
If one runs gradient flow and resets the base state 
with dual field reset to zero
(as in our numerical algorithm which was based on Newton-type iteration 
rather than gradient flow, however), 
the equation residual would not change with the reset
and would be ensured to be non-increasing in gradient flow afterwards. 
Perhaps one could stay away from the domain boundary and have the gradient flow equilibrate this way.

\section*{Acknowledgments}
The work of UK was supported by funds from the NSF grant OIA-DMR 2021019 and the Paul P. Christiano Professorship in the Dept.~of Civil \& Environmental Engineering at CMU.
This material is based upon work supported by the National Science Foundation
under grant DMS 2106534 to RLP.

\printbibliography
\appendix

%--0-
\section{Formal KdV asymptotics}\label{s.KdV}
Here we describe solutions of
the semi-discrete Burgers equation \eqref{eq:dB} that are long-wave perturbations of a
constant state $u_*\ne0$, by a well-known formal asymptotic approximation. 
The approximation takes the form
\begin{equation}
    \label{d:kdv_u}
    u_j(t) = u_* + \eps^2 v(x,\tau), \qquad\text{with}\quad x = \eps(j-c_* t), \quad \tau =\eps^3 t, 
\end{equation}
where $c_*=u_*$ is the limiting speed of long waves in the linearization of \eqref{eq:dB}.
Then  straightforward use of the chain rule and Taylor expansion yields
\begin{align}
    \ddt u_j &= \eps^5\D_\tau v - \eps^3\,c_* \D_xv\,,
    \\
    u_{j\pm 1} &= u_* + \eps^2\left(v\pm \eps \D_x v + \tfrac12 \eps^2 \D_x^2 v 
    \pm \tfrac16\eps^3  \D_x^3 v+\tfrac1{24}\eps^4 \D_x^4 v\right)+O(\eps^7)\,.
\end{align}
By straightforward substitution this results in the residual, or equation error,
\begin{align}
   & \ddt u_j + \frac14\left(u_{j+1}^2 - u_{j-1}^2 \right) 
    = \eps^5 \left(
    \D_\tau v + v\, \D_x v + \frac{u_*}6 \D_x^3 v \right) + O(\eps^7)\,.
\end{align}
Thus \eqref{eq:dB} is formally approximated by the KdV equation  
\begin{equation}
    \D_\tau v + v\, \D_x v + \frac{u_*}6 \D_x^3 v = 0  \,.
\end{equation}
This KdV equation
has solitary wave solutions with any wave speed $\hat c$ having the same sign as $u_*$, given by 
\begin{equation}
    v(x,\tau) = {3\hat c} \sech^2\left(\frac{x-\hat c\tau}{2}\sqrt{\frac{6\hat c}{u_*}}\right)\,.
\end{equation}
This provides an approximate traveling wave for the semi-discrete Burgers equation~\eqref{eq:dB}
in the form 
\begin{equation}
    \hat u_j(t) = u_* + {3\gamma}\sech^2\left(\frac{j-ct}2 \sqrt{\frac{6\gamma}{u_*}}\right)
    ,\quad c=u_*+\gamma, \quad \gamma = \eps^2\hat c.
\end{equation}

\section{Effect of the choice of $\mathsf{a}$}\label{app:beta}
This appendix motivates the usage of a larger value of $\mathsf{a}$ in the auxiliary potential function $H$ \eqref{eq:auxiliary_func}. We consider the problem for a scaled $\mathcal{PV}$ set as base state (see Sec.~\ref{sec:without_prior} and \eqref{eq:2PV_base_state}) with $\tilde{\alpha}$ set as $2$. We employ a simple N-R scheme with tolerance set as per \eqref{eq:tol} for this problem with two different types of initial guesses on $\lambda$ given by: 
\begin{equation*}
    \lambda^{(0)}(x) = 0 \quad \mbox{and} \quad \lambda^{(0)}(x) = e^{-x^2}.
\end{equation*}
The convergence results and the corresponding residual values are shown in the Table \ref{table:Converge}.

\renewcommand{\arraystretch}{1.5} % Adjust this value as needed
\begin{table}[h!]
\centering
\begin{minipage}{0.45\textwidth}
\centering
\begin{tabular}{|c|c|}
\hline
\multicolumn{2}{|c|}{$\lambda^{(0)}(x) = 0$} \\ \hline \hline
$\mathsf{a}$ & N-R Result \\ \hline
$10^{-6}$ & Converged    \\ \hline 
$1$       & Converged     \\ \hline
$10^{6}$  & Converged     \\ \hline
\end{tabular}
\end{minipage}
\hfill
\begin{minipage}{0.45\textwidth}
\centering
\begin{tabular}{|c|c|}
\hline
\multicolumn{2}{|c|}{$\lambda^{(0)}(x) = e^{-x^2}$} \\ \hline \hline
$\mathsf{a}$ & N-R Result \\ \hline 
$10^{-6}$ & No Convergence \\ \hline 
$1$       & No Convergence \\ \hline
$10^{6}$  & Converged       \\ \hline
\end{tabular}
\end{minipage}
\caption{Convergence results for different initial conditions: ``Converged" and ``No Convergence" indicate whether the residual in the N-R iterations converged below the set tolerance \eqref{eq:tol}, $tol=10^{-12}$, or not.}
\label{table:Converge}
\end{table}

\renewcommand{\arraystretch}{1}

The difference in the (non)convergence to a solution of the N-R iterations for the two cases can be understood as follows. Direct inspection shows that our algorithm produces invariant results under the following scaling transformation: if a residual $R^{(k,1)}$ is the result produced at iteration $k$ for a choice of $\mathsf{a} = 1$ and initial guess $\lambda^{(0)}$ then, for a choice of $\mathsf{a} = \mathsf{a}^*$, if the initial guess is scaled to $\mathsf{a}^* \lambda^{(0)}$, the residual remains invariant, i.~e.,
\[
R^{(k,\mathsf{a}^*)} = R^{(k,1)}.
\]
Moreover, $\lambda^{(k,\mathsf{a}^*)} = \mathsf{a}^* \lambda^{(k,1)}$.  

Indeed, from \eqref{eq:dtp} we observe that
\begin{equation*} 
f^{(H)}(\dee, 1, x) = f^{(H)}(\mathsf{a} \, \dee, \mathsf{a}, x), \end{equation*} 
where $\mathsf{a}\,\dee$ denotes scaling each element of $\dee$ by the scalar $\mathsf{a}$. Based on \eqref{qe:dual_deriv}, it can also be verified that 
\begin{equation*} 
\frac{\p f^{(H)}}{\p\,\dee}(\mathsf{a} \, \dee, \mathsf{a}, x) = \frac{1}{\mathsf{a}} \,\frac{\p f^{(H)}}{\p\,\dee}(\dee, 1, x). 
\end{equation*}
Thus, in the N-R iterations
\[
\lambda^{(k+1)} - \lambda^{(k)} = - J^{(k)-1} R^{(k)},
\]
if $\lambda^{(k)} \to \mathsf{a} \lambda^{(k)}$,  $J^{(k)-1} \to \mathsf{a} J^{(k)-1}$, $ R^{(k)} \to  R^{(k)}$ when $\mathsf{a}$ changes from $1 \to \mathsf{a}$ then this implies that at each step $\lambda^{(k+1)} \to \mathsf{a} \lambda^{(k+1)}$.

For $\lambda^{(0)} = 0$, the scaling hypothesis on the initial guess is satisfied and there is no change in the N-R iteration convergence profile as $\mathsf{a}$ is varied. When $\lambda^{(0)}$ is not scaled with $\mathsf{a}$, the hypothesis is not satisfied and there is a significant difference in the ability of the algorithm to obtain solutions with varying $\mathsf{a}$.

\newpage
\section{Testing primal fields using a Finite Difference approximation} \label{app}
Based on the dual solution obtained at nodes of any FE mesh, we use a finite difference to approximate the terms in \eqref{eq:primal} and check how well the equation gets satisfied at these nodes. For a primal field $f$ and corresponding to any node $A$, this agreement is evaluated based on the following expression:
\begin{equation}
\label{eq:fdm_approx}
    Err^A = \frac{f^{A+1}-f^{A-1}}{2\,dx} + \frac{1}{2}\left( \Big(f^{A+\frac{1}{dx}}\Big)^2 - \Big(f^{A-\frac{1}{dx}}\Big)^2 \right),  
\end{equation}
where $dx$ represents the element length and the meshes are chosen in such a way that for a node at $x$, there always exists nodes at $x+1$ and $x-1$.

Focusing on the internal part of the domain, the maximum absolute value of \( Err^A \) within the region \( x \in (-L+2.5, L-2.5) \) for examples in this work is summarized in Table \ref{table:err}.

\newpage
\section{Results obtained on mesh refinement}

\label{app:figures}

%%%%%%%%%%%%%%%%%%%%%%%%%%%
%%%%%%%%%%%%%%%%%%%%%%%%%%%
%%%%%%%%%%%%%%%%%%%%%%%%%%%

\begin{figure}[h!]
    \centering
    % New first subfigure (previously subfigure 5)
    \begin{minipage}[b]{0.45\textwidth}
        \centering
        \includegraphics[width=\textwidth]{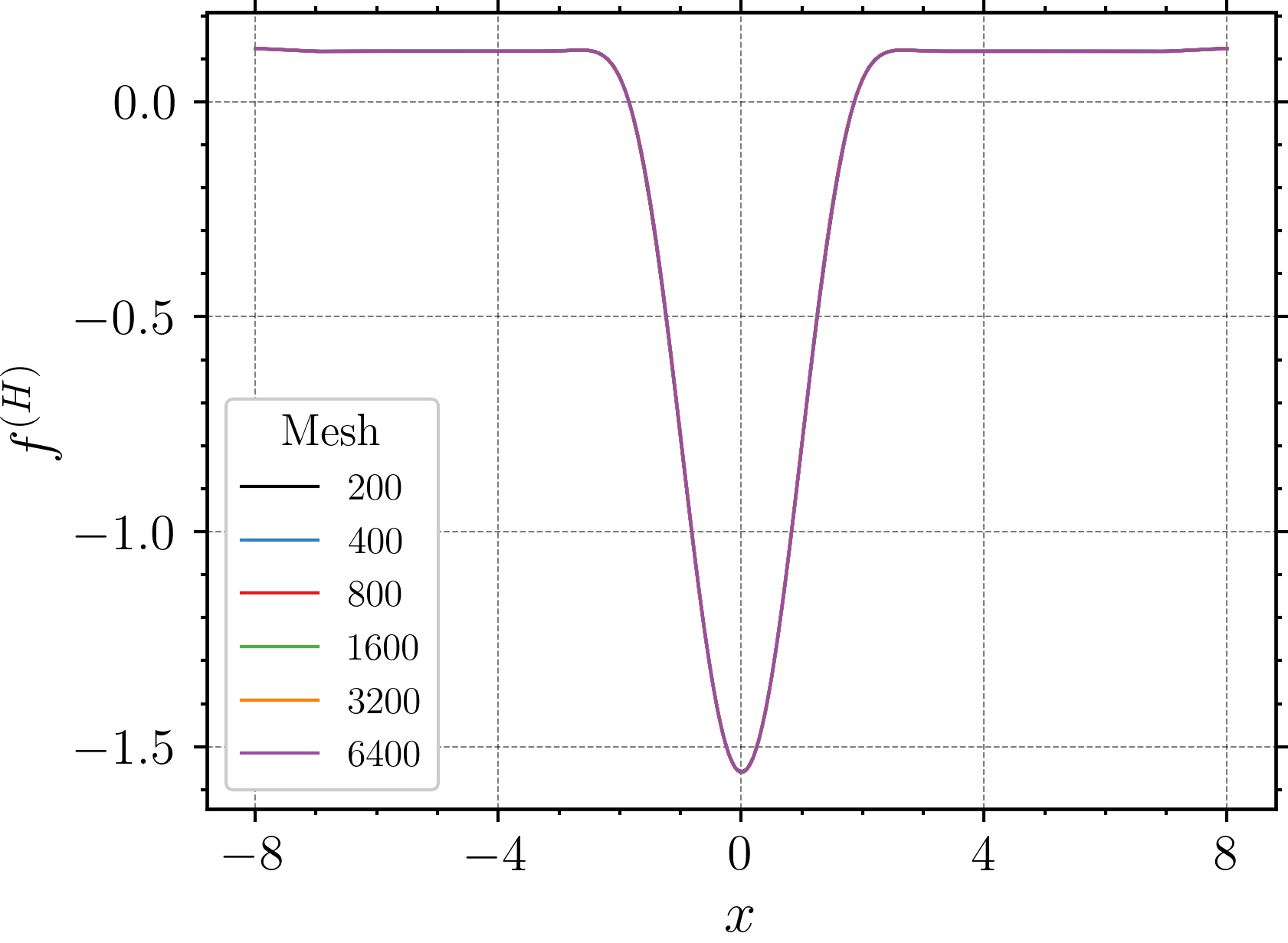} % Replace with your fifth figure file
        \subcaption{$\tilde{\alpha}=2$}
        \label{fig:sub-P2_2}
    \end{minipage}
    \hspace{0.04\textwidth}
    % New second subfigure (previously subfigure 1)
    \begin{minipage}[b]{0.46\textwidth}
        \centering
        \includegraphics[width=\textwidth]{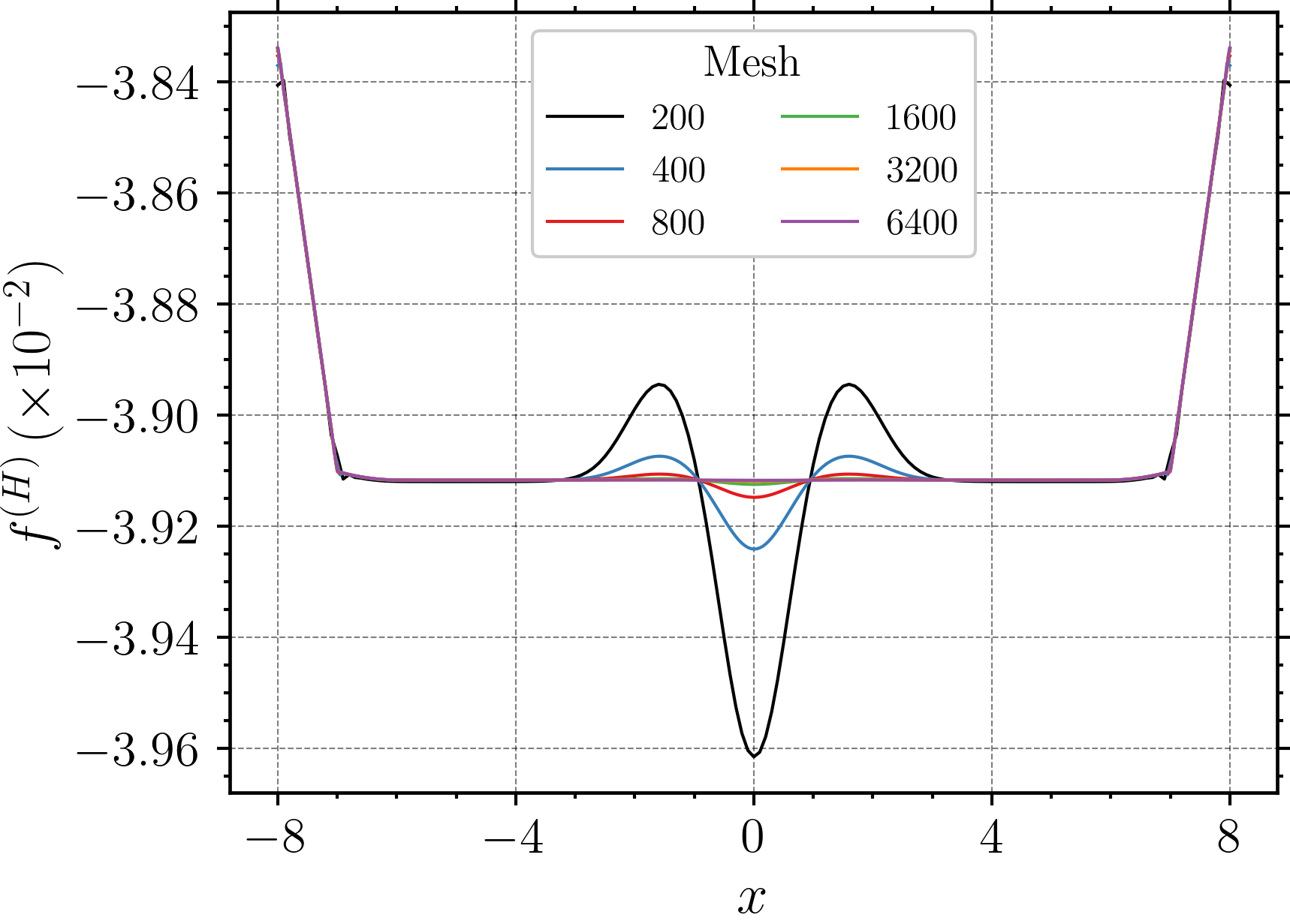} % Replace with your first figure file
        \subcaption{$\tilde{\alpha}=0.2$ (Range of plot is $\mathcal{O}(10^{-3})$)}
        \label{fig:sub-CA_1}
    \end{minipage}

    \vspace{0.5cm}

    % New third subfigure (previously subfigure 2)
    \begin{minipage}[b]{0.45\textwidth}
        \centering
        \includegraphics[width=\textwidth]{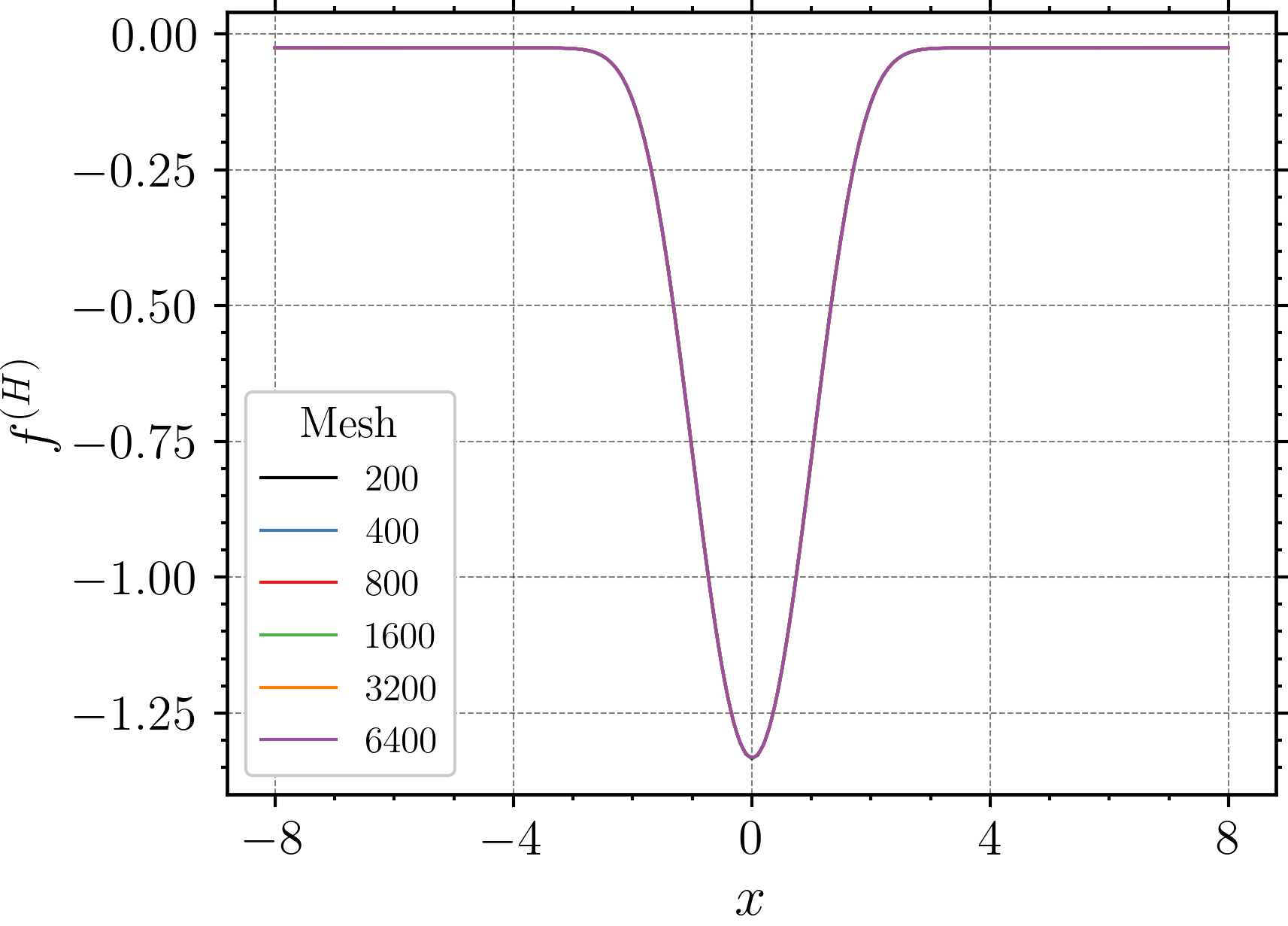} % Replace with your second figure file
        \subcaption{$\tilde{\alpha}=0.8$}
        \label{fig:sub-CA_2}
    \end{minipage}
    \hspace{0.05\textwidth}
    % New fourth subfigure (previously subfigure 3)
    \begin{minipage}[b]{0.45\textwidth}
        \centering
        \includegraphics[width=\textwidth]{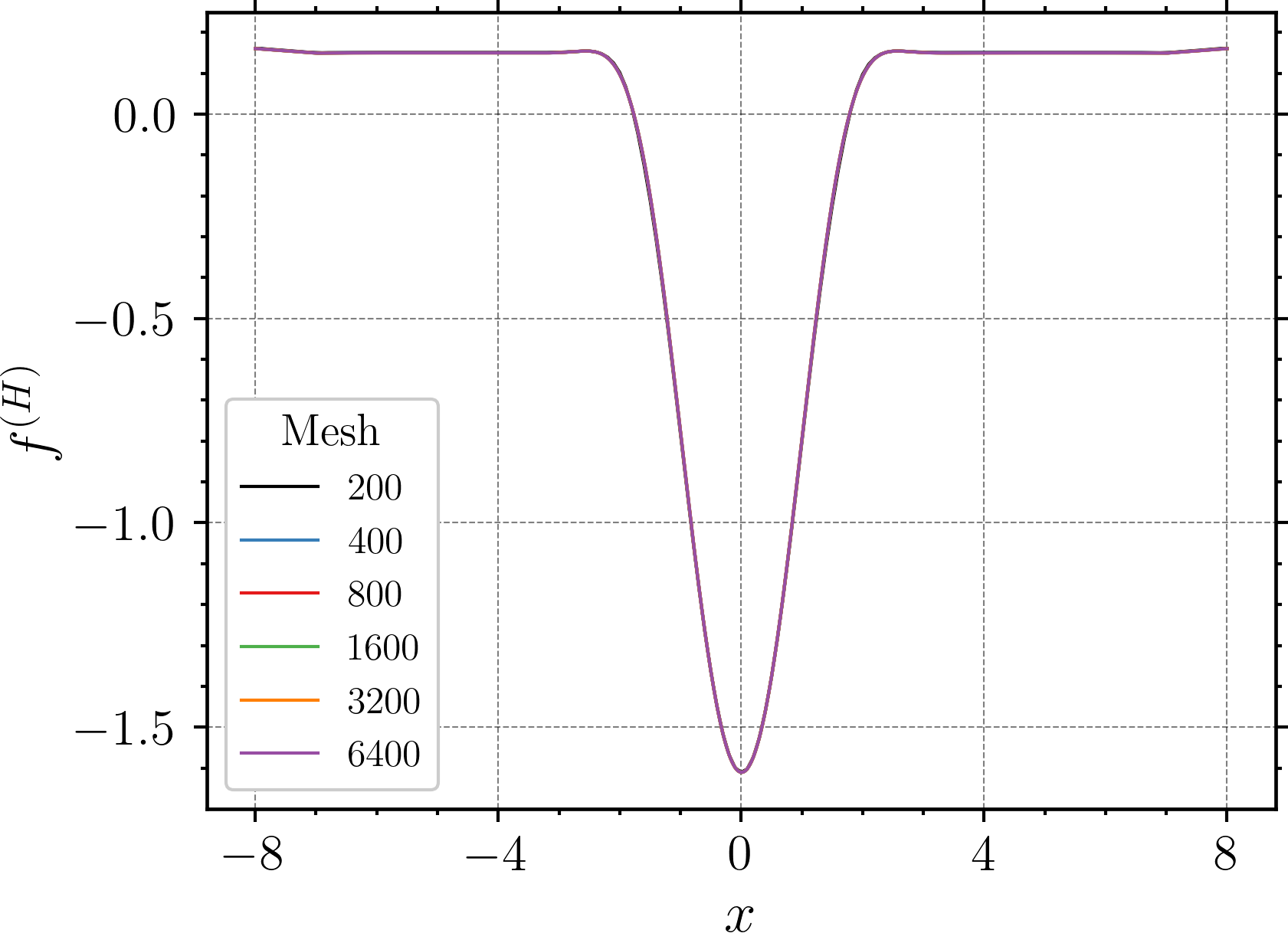} % Replace with your third figure file
        \subcaption{$\tilde{\alpha}=2.3$}
        \label{fig:sub-CA_3}
    \end{minipage}
    
    \vspace{0.5cm}  

    % New fifth subfigure (previously subfigure 4)
    \begin{minipage}[b]{0.45\textwidth}
        \centering
        \includegraphics[width=\textwidth]{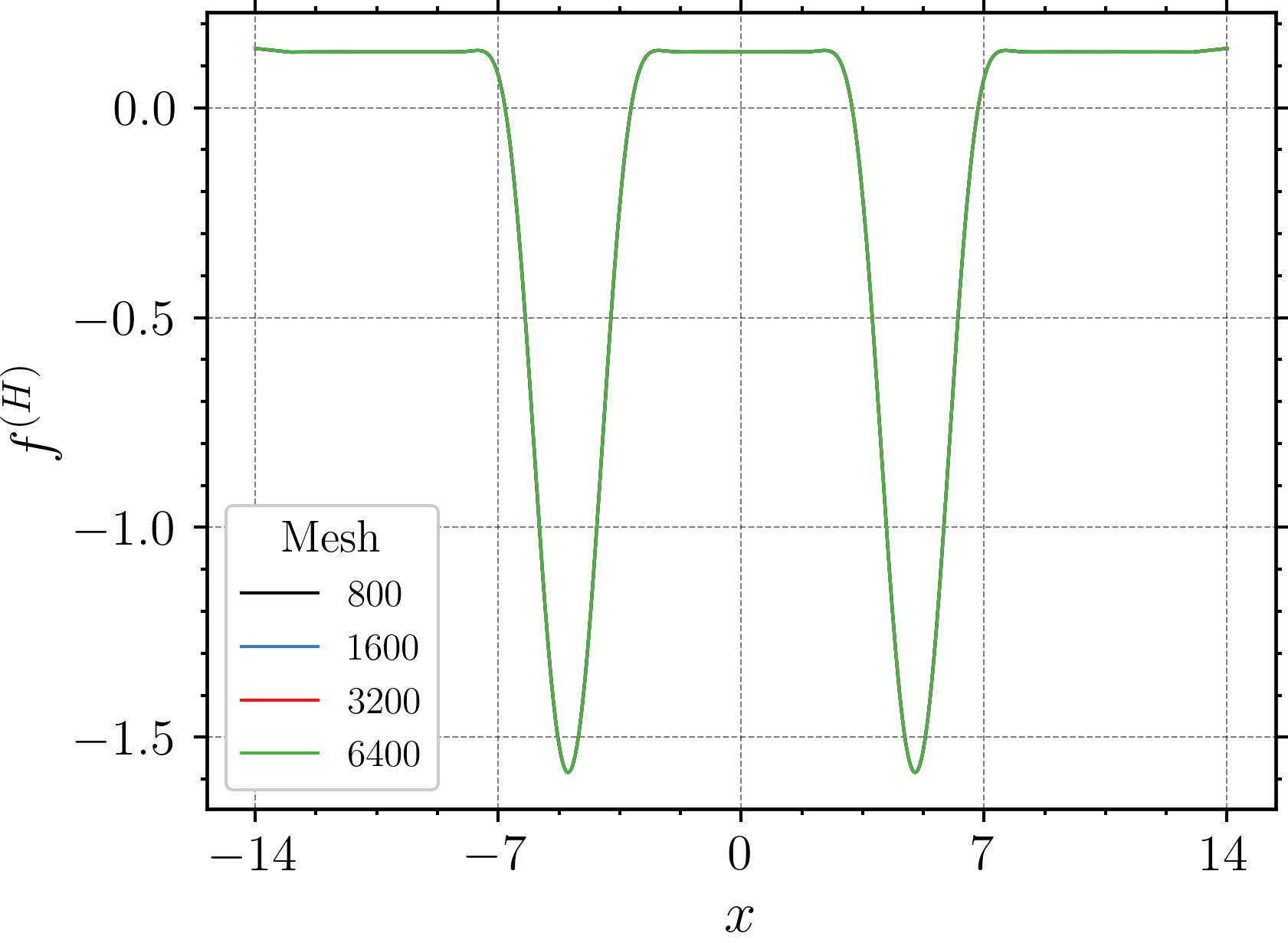} % Replace with your fourth figure file
        \subcaption{$\tilde{\alpha}=2$ (Double Hump)}
        \label{fig:sub-CA_5}
    \end{minipage}
    
    % Unified caption
    \caption{Convergence of obtained primal profiles w.r.t mesh refinement with $\mathcal{PV}$ profiles scaled by a factor of $\tilde{\alpha}$ set as the base states: Fig.~(a)-(d) were produced with a single hump in the base state, whereas Fig.~(e) was produced using a double hump in the base state.} 
    \label{fig:CA}
\end{figure}

%%%%%%%%%%%%%%%%%%%%
% GAUSSIAN CONVERGENCE PROFILES
%%%%%%%%%%%%%%%%%%%%
% \renewcommand{\arraystretch}{1}
\begin{figure}
    \centering
    % First subfigure
    
    % Third subfigure
    \begin{minipage}[b]{0.46\textwidth}
        \centering
        \includegraphics[width=\textwidth]{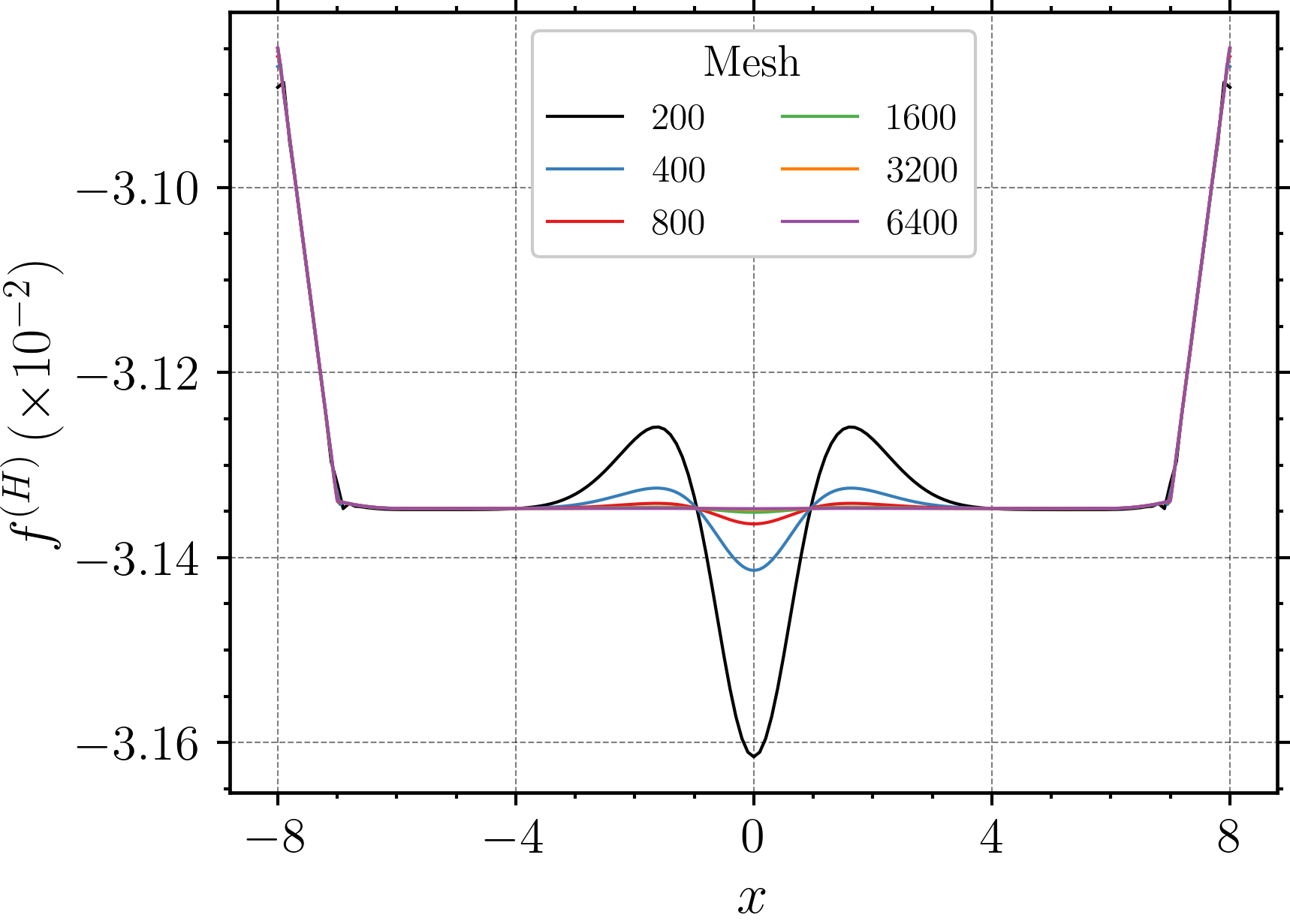} % Replace with your first figure file
        \subcaption{$\gamma= -0.5$ (Range of plot is $\mathcal{O}(10^{-4})$)}
        \label{fig:sub-P5_2}
    \end{minipage}
    \hspace{0.04\textwidth}
    % Second subfigure
    \begin{minipage}[b]{0.45\textwidth}
        \centering
        \includegraphics[width=\textwidth]{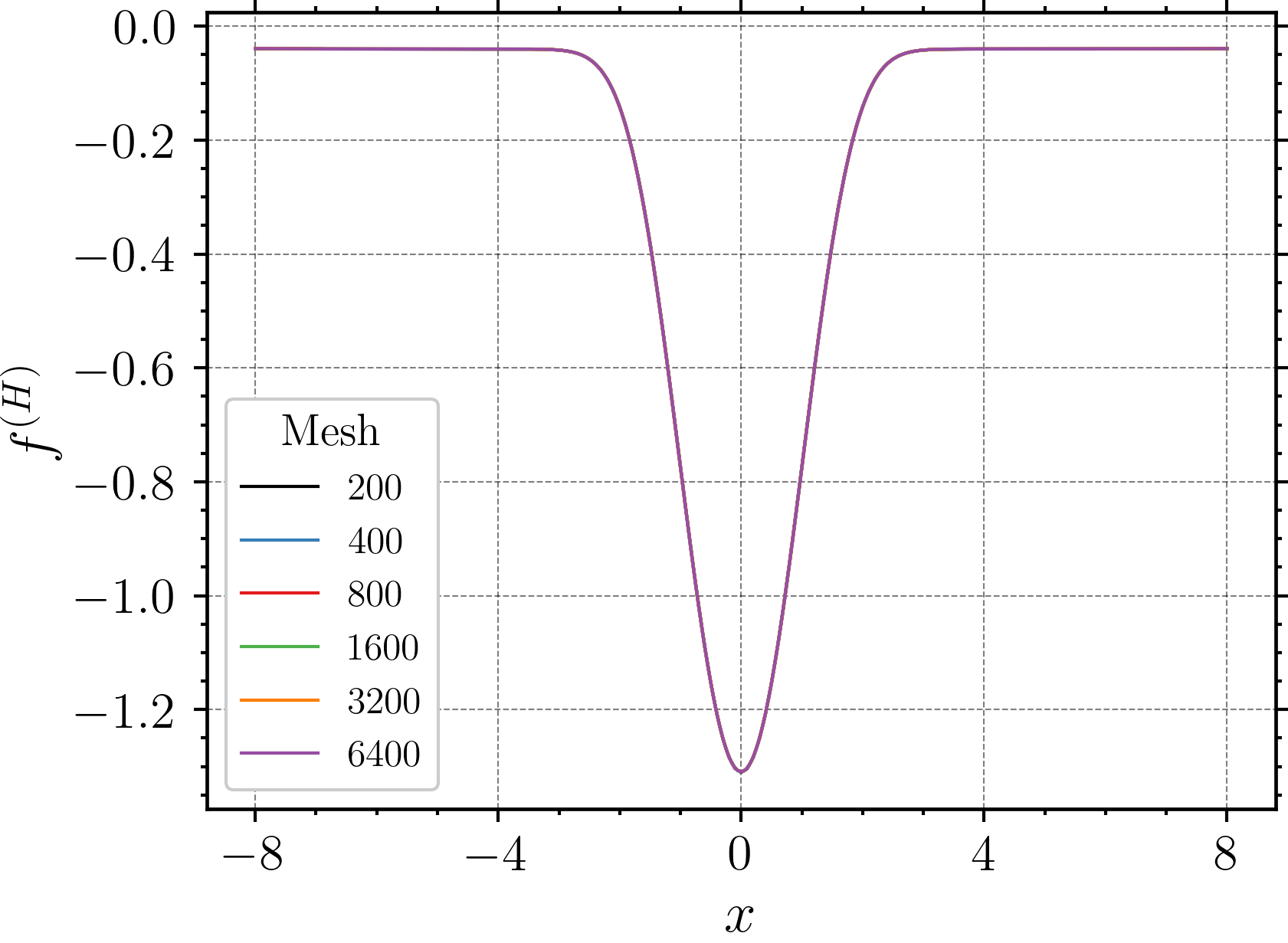} % Replace with your second figure file
        \subcaption{$\gamma=-2.7$}
        \label{fig:sub-P5_3}
    \end{minipage}
    
    \vspace{0.5cm}  

    \begin{minipage}[b]{0.45\textwidth}
        \centering
        \includegraphics[width=\textwidth]{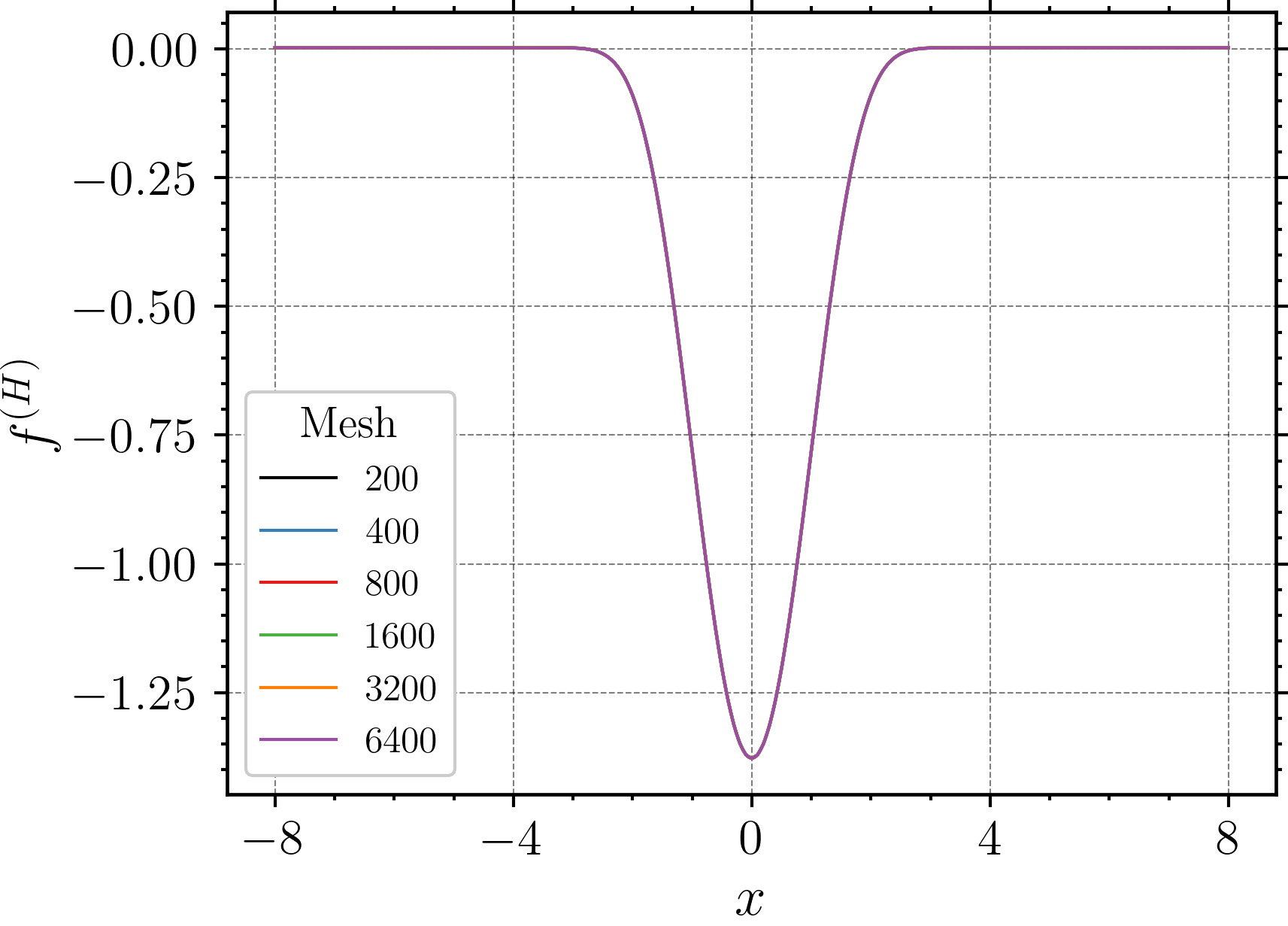} % Replace with your first figure file
        \subcaption{$\gamma=-4.0$}
        \label{fig:sub-P5_4}
    \end{minipage}
    \hspace{0.05\textwidth}
    % Second subfigure
    \begin{minipage}[b]{0.45\textwidth}
        \centering
        \includegraphics[width=\textwidth]{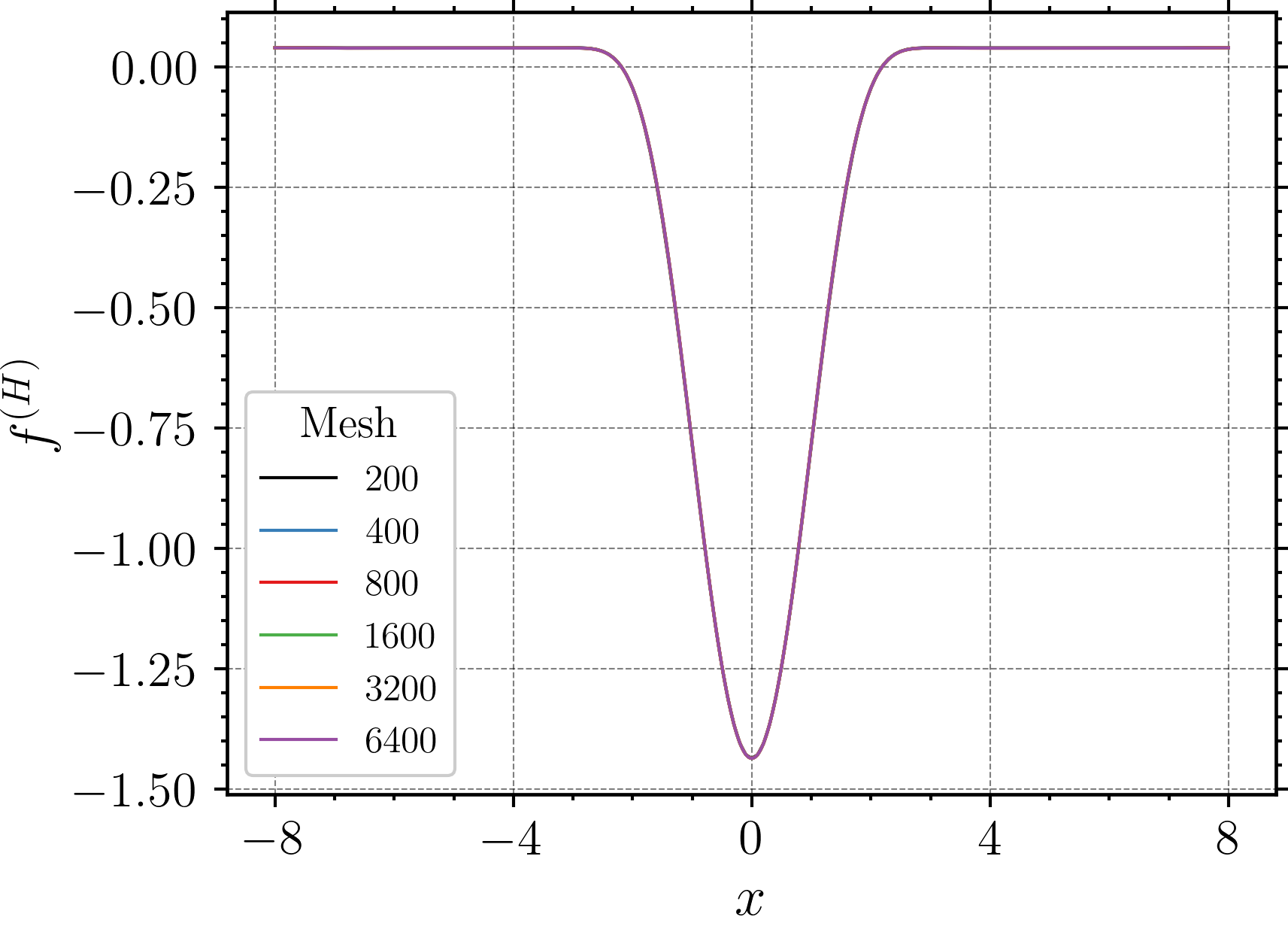} % Replace with your second figure file
        \subcaption{$\gamma=-5.2$}
        \label{fig:sub-P5_5}
    \end{minipage}
  
    % Unified caption
    \caption{Convergence of obtained primal profiles w.r.t mesh refinement for scaled Gaussian  profiles (by a factor of $\gamma$) set as base states. Results were produced using a simple N-R scheme.} 
    \label{fig:CG}
\end{figure}

\begin{figure}
    \centering
    % First subfigure
    \begin{minipage}[b]{0.45\textwidth}
        \centering
        \includegraphics[width=\textwidth]{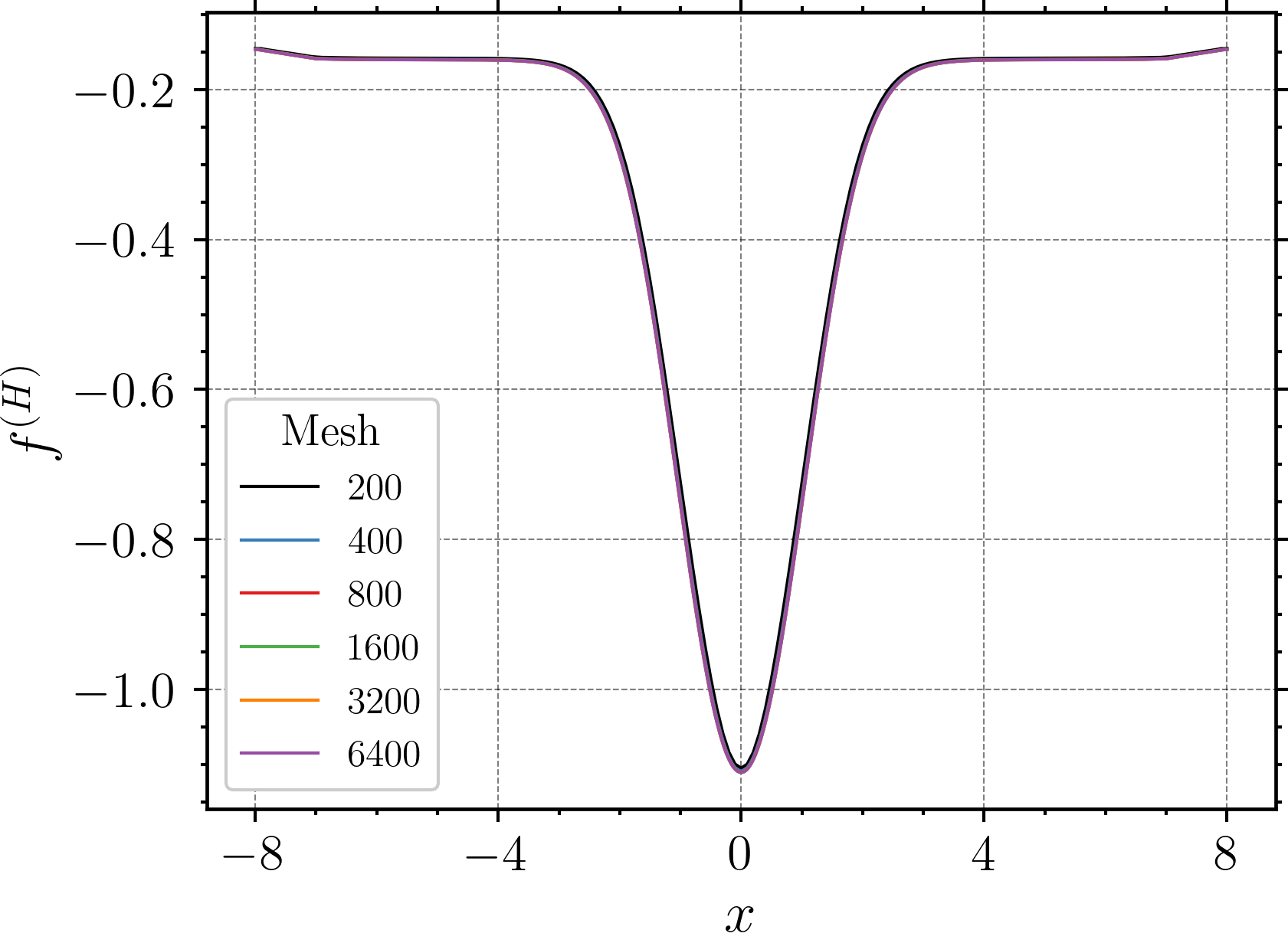} % Replace with your second figure file
        \subcaption{$\gamma=-1.9$}
        \label{fig:sub-CGF_P1_1}
    \end{minipage}
    \hspace{0.05\textwidth}
    % Second subfigure
    \begin{minipage}[b]{0.46\textwidth}
        \centering
        \includegraphics[width=\textwidth]{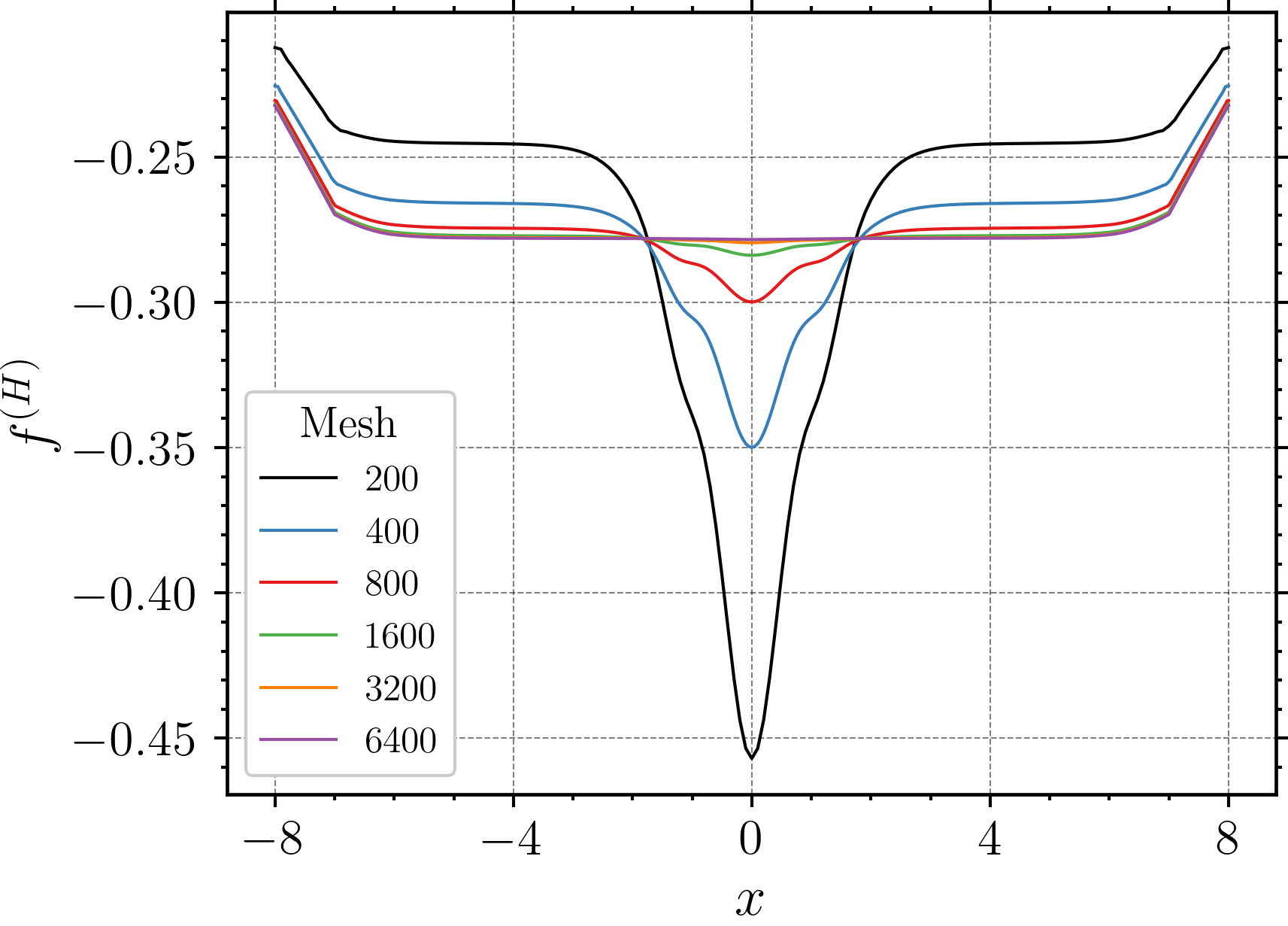} % Replace with your second figure file
        \subcaption{$\gamma=-1.7$ (Range of plot is $\mathcal{O}(10^{-1})$)}
        \label{fig:sub-CGF_P1_2}
    \end{minipage}
    
    % Unified caption
    \caption{Convergence of obtained primal profiles w.r.t mesh refinement for two closely related scaled Gaussian profiles, each scaled by a factor of 
$\gamma$, set as the base state. Alg.~\ref{algo} was utilized.} 
    \label{fig:CGF_P1}
\end{figure}

\begin{figure}[h!]
    \centering
    % First subfigure
    \begin{minipage}[b]{0.46\textwidth}
        \centering
        \includegraphics[width=\textwidth]{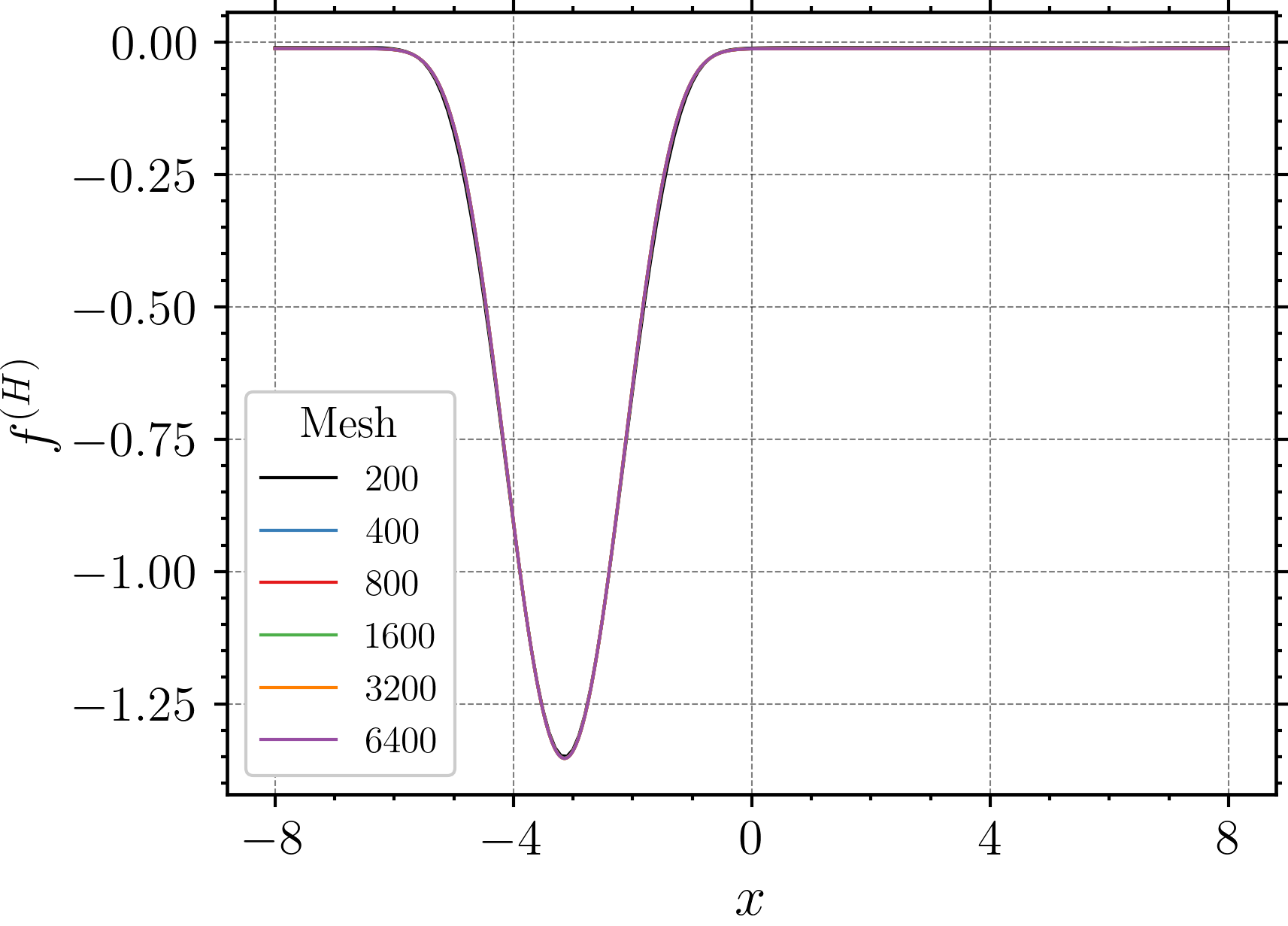} % Replace with your second figure file
        \subcaption{$\omega=0.5$}
        \label{fig:sub-GF_P2_2}
    \end{minipage}
    \hspace{0.04\textwidth}
    % Second subfigure
    \begin{minipage}[b]{0.46\textwidth}
        \centering
        \includegraphics[width=\textwidth]{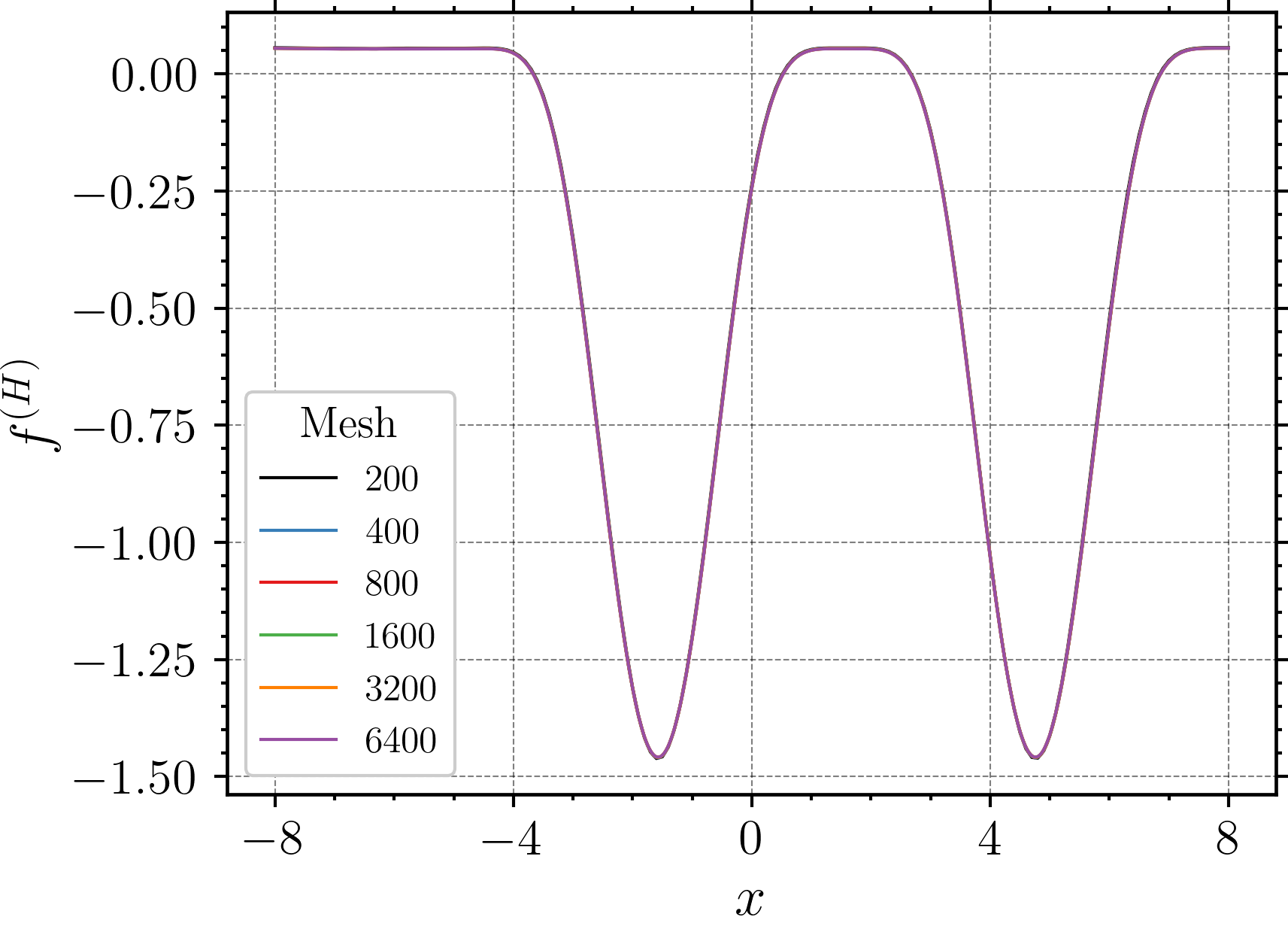} % Replace with your second figure file
        \subcaption{$\omega=1$}
        \label{fig:sub-GF_P2_4}
    \end{minipage}

    \vspace{0.5cm}

    % Third subfigure
    \begin{minipage}[b]{0.46\textwidth}
        \centering
        \includegraphics[width=\textwidth]{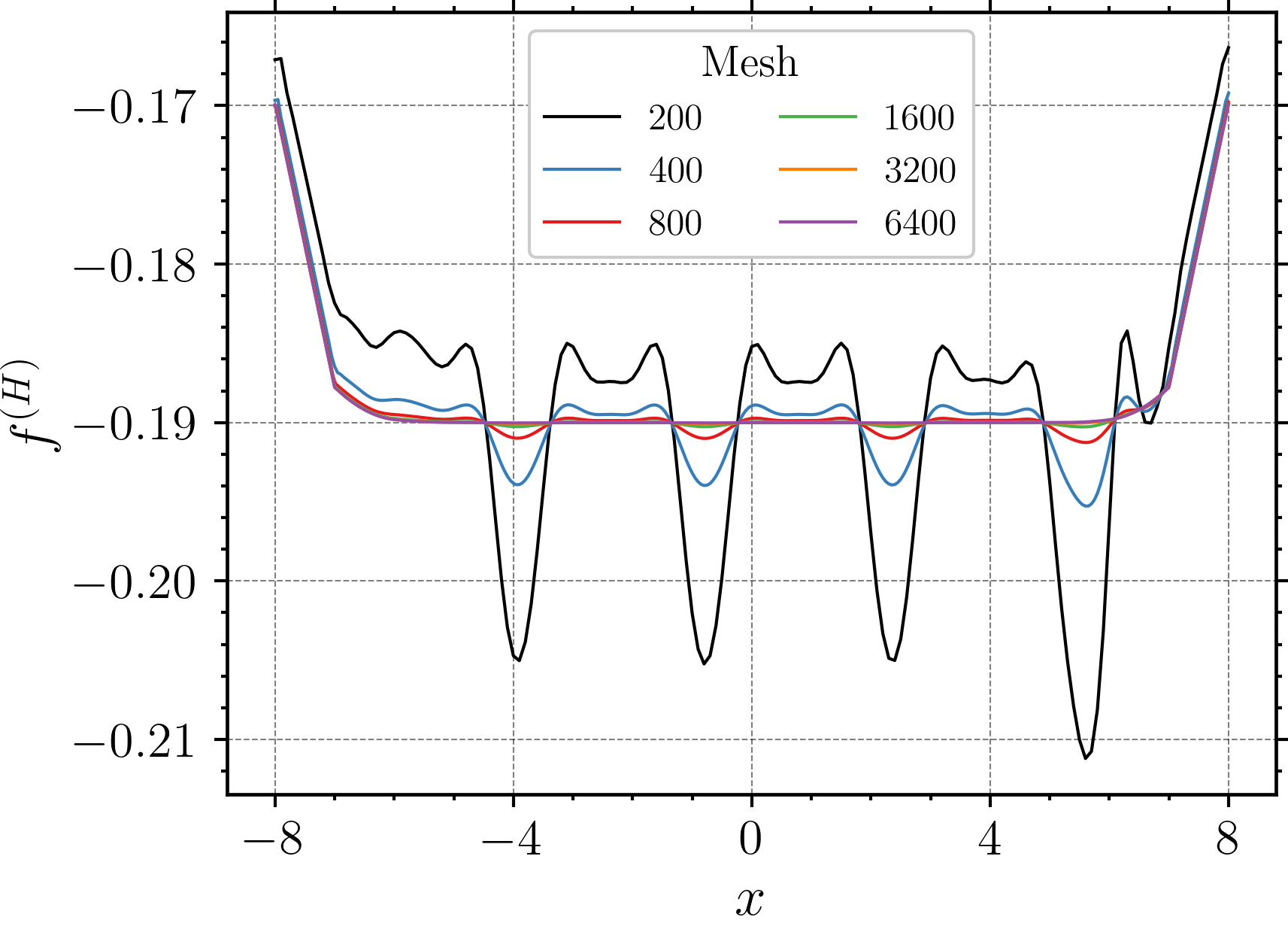} % Replace with your second figure file
        \subcaption{$\omega=2$ (Range of plot is $\mathcal{O}(10^{-2})$)}
        \label{fig:sub-GF_P2_6}
    \end{minipage}
    \hspace{0.05\textwidth}
    % Second subfigure
    \begin{minipage}[b]{0.45\textwidth}
        \centering
        \includegraphics[width=\textwidth]{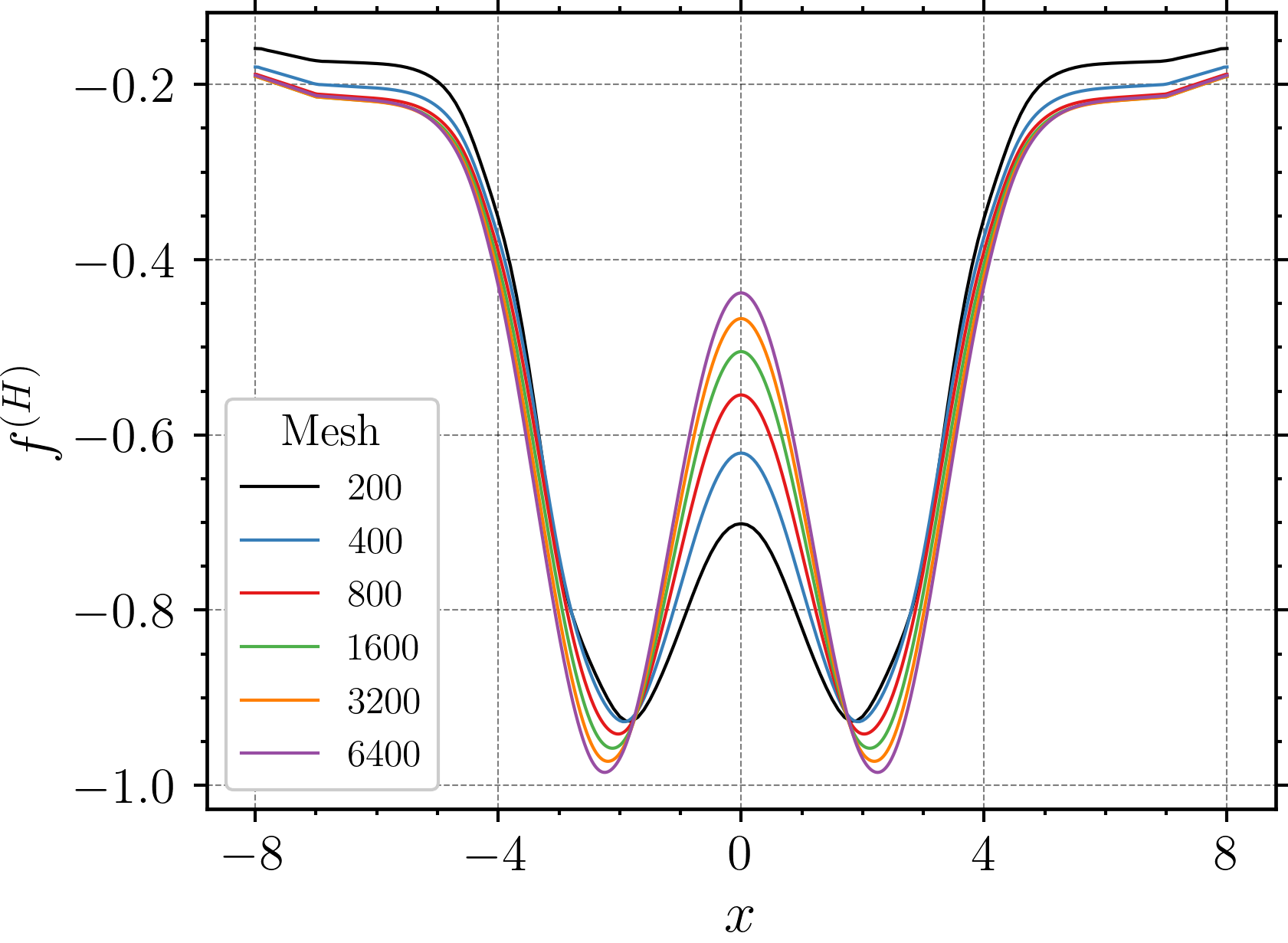} % Replace with your second figure file
        \subcaption{$h=-0.4$}
        \label{fig:sub-GF_P3_2}
    \end{minipage}
    
    \vspace{0.5cm}  

    \begin{minipage}[b]{0.46\textwidth}
        \centering
        \includegraphics[width=\textwidth]{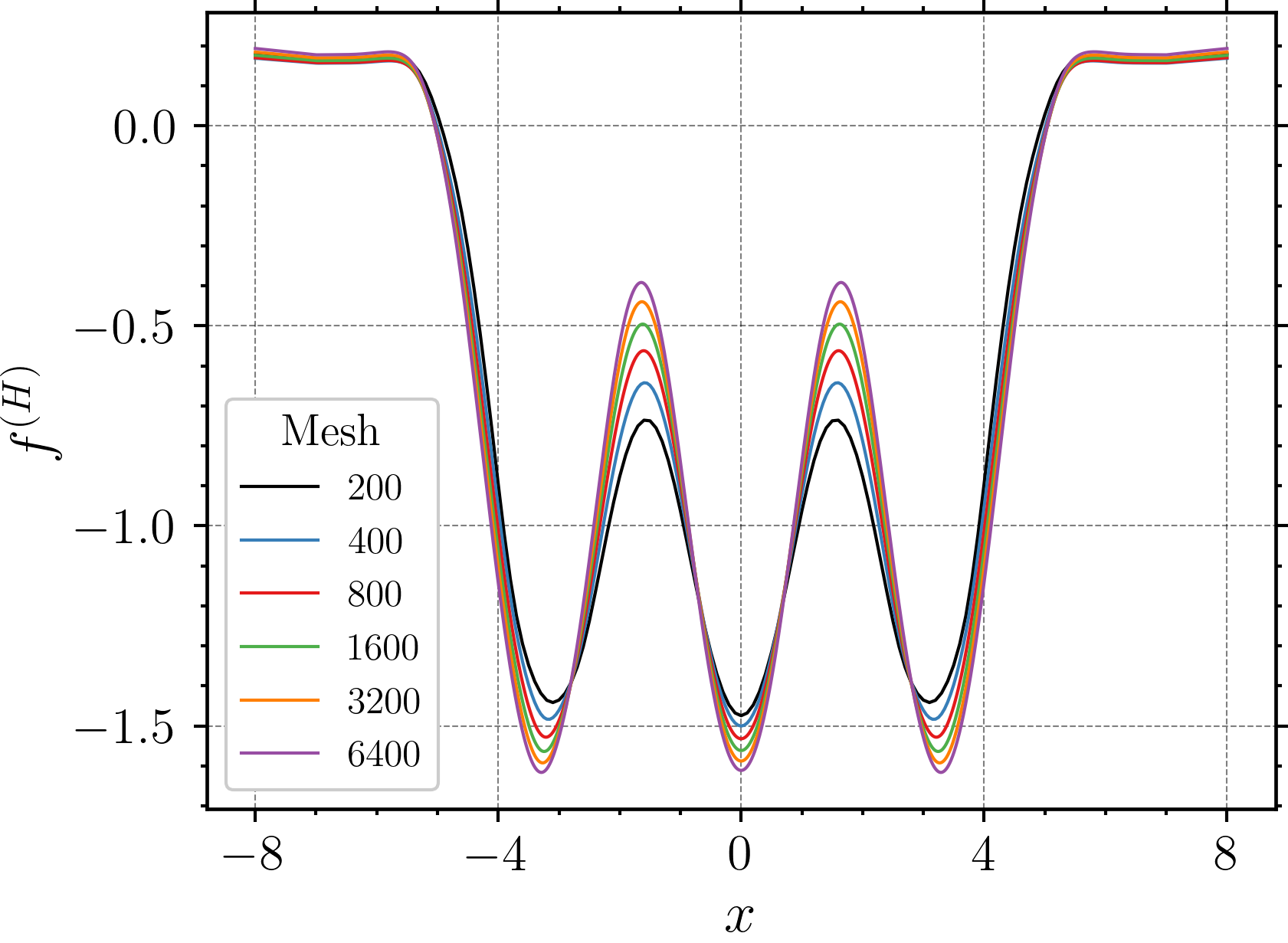} % Replace with your second figure file
        \subcaption{$h=-0.8$}
        \label{fig:sub-GF_P3_4}
    \end{minipage}
    \hspace{0.05\textwidth}
    % Second subfigure
    \begin{minipage}[b]{0.46\textwidth}
        \centering
        \includegraphics[width=\textwidth]{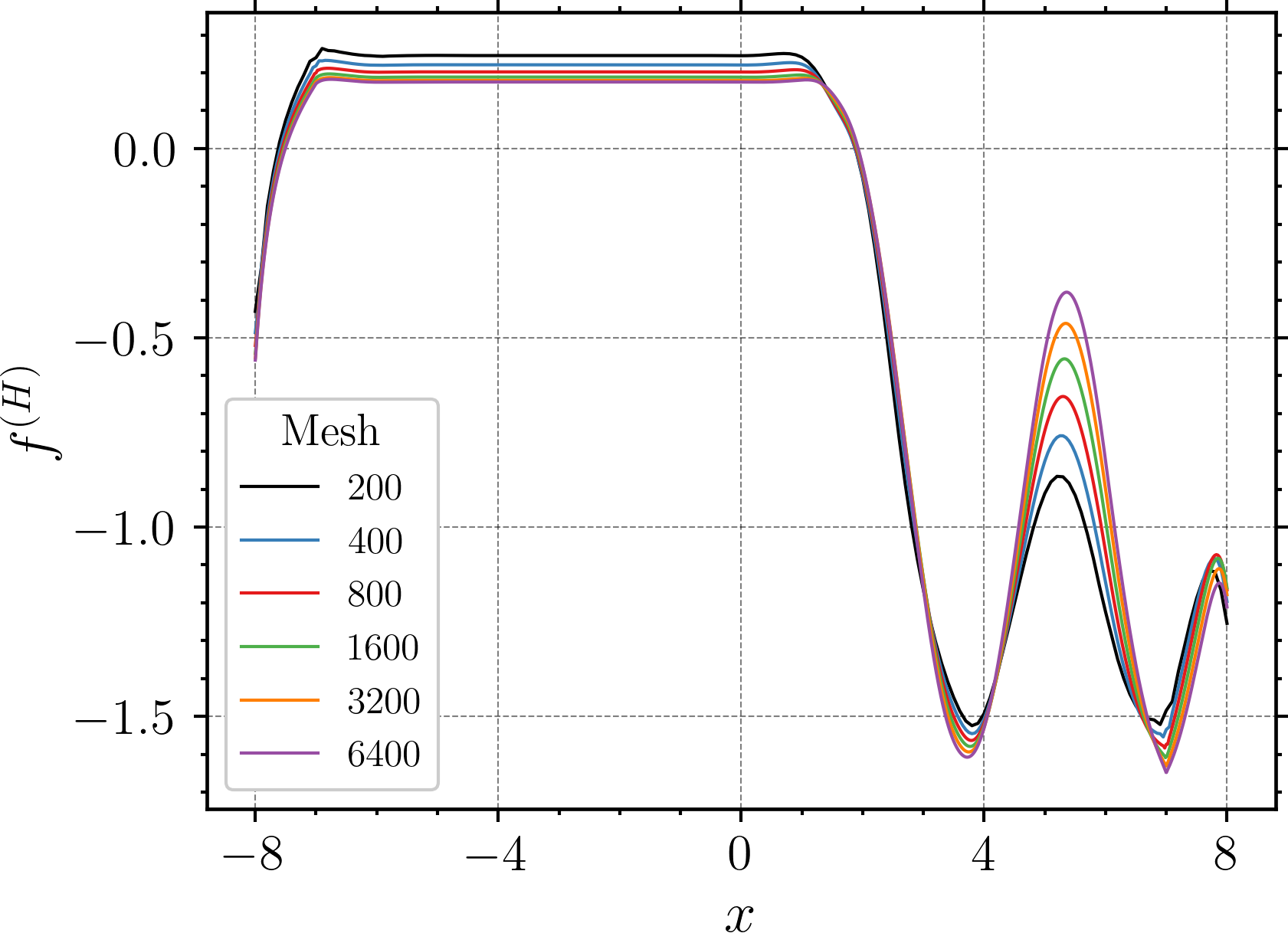} % Replace with your second figure file
        \subcaption{$y=-0.25\,x$}
        \label{fig:sub-GF_P4_2}
    \end{minipage} 
    
    % Unified caption
    \caption{Convergence of obtained primal  profiles w.r.t mesh refinement for the following functions set as base states: sinusoidal functions \eqref{eq:sine}, negative hat functions \eqref{eq:hat} and a linear function \eqref{eq:linear} are shown in Fig.~(a)-(c), Fig.~(d)-(e) and Fig.~(f) respectively. Alg.~\ref{algo} was utilized.} 
    \label{fig:GFC_P2}
\end{figure}

\begin{figure}
    \centering
    % First subfigure
    \begin{minipage}[b]{0.45\textwidth}
        \centering
        \includegraphics[width=\textwidth]{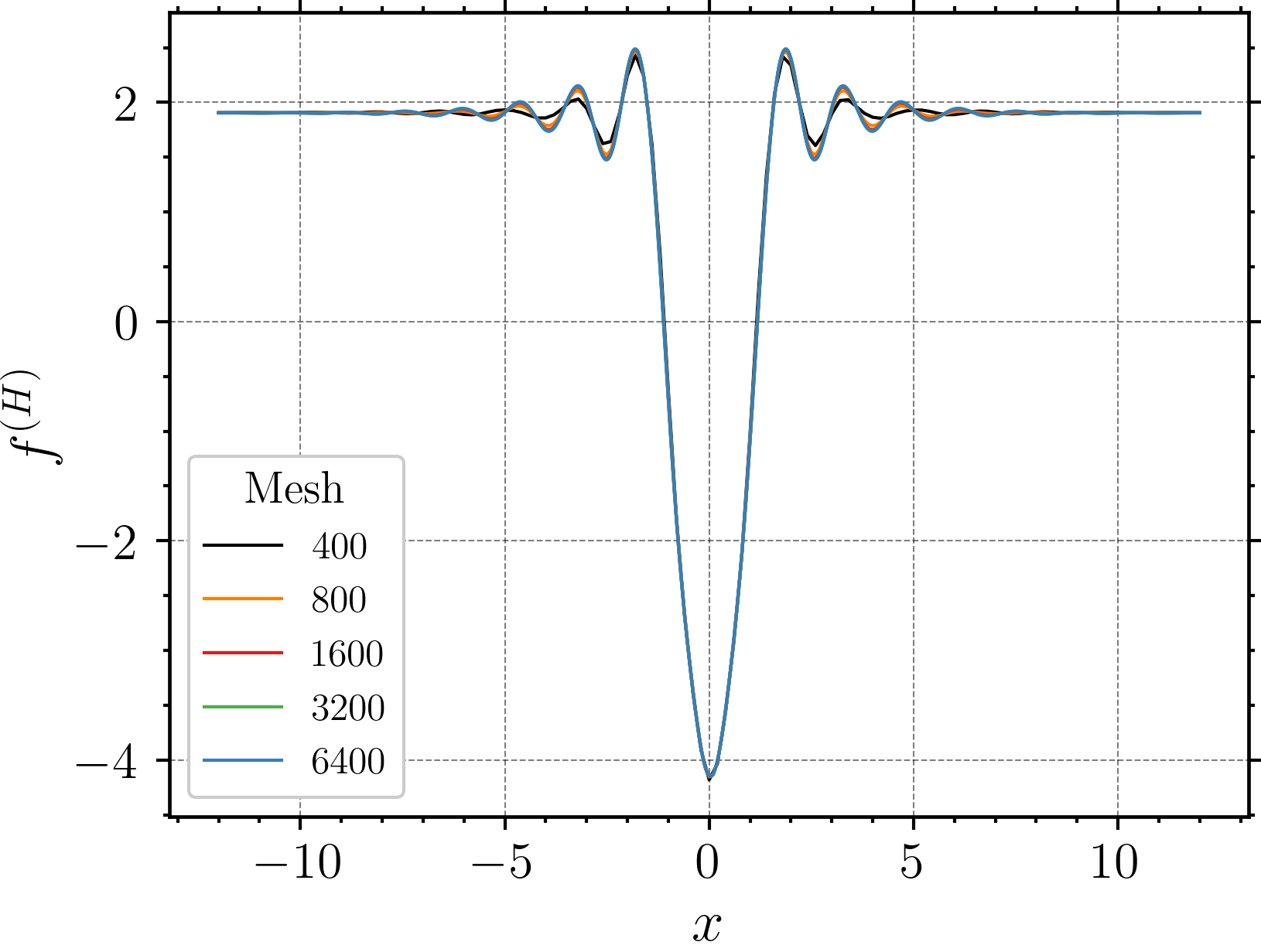} % Replace with your first figure file
        \subcaption{$\bar{f}=3\mathcal{PV} +2$}
        \label{PV_waves_refine1}
    \end{minipage}
    \hspace{0.05\textwidth}
    % Second subfigure
    \begin{minipage}[b]{0.45\textwidth}
        \centering
        \includegraphics[width=\textwidth]{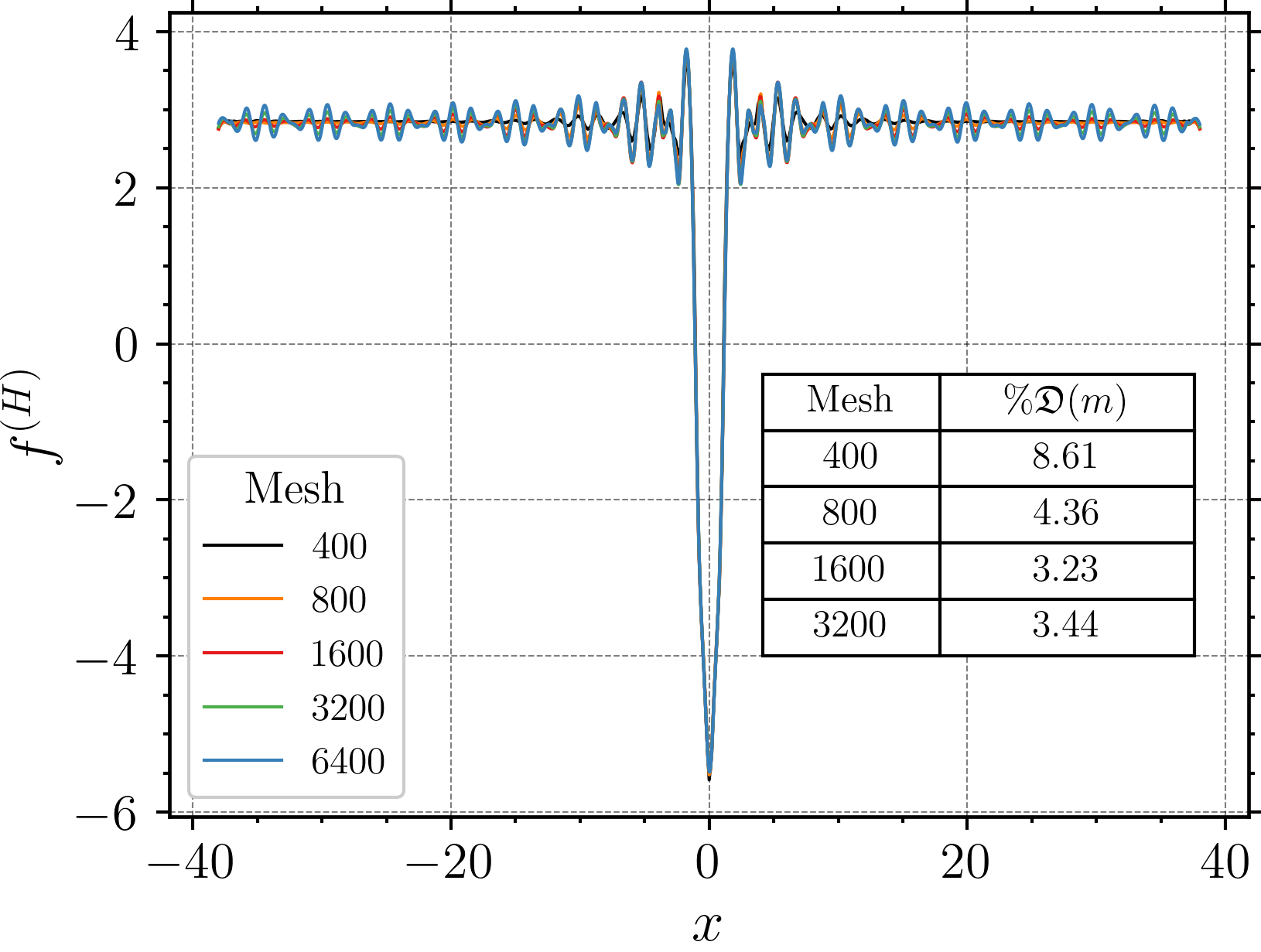} % Replace with your first figure file
        \subcaption{$\bar{f}=4\mathcal{PV} +3$}
        \label{fig:PV_waves_refine2}
    \end{minipage} 
     
    % Unified caption
    \caption{Convergence of obtained primal profiles w.r.t mesh refinement for base states set as a scaled $\mathcal{PV}$ solution shifted by a  constant (Fig.~\ref{fig:PV_waves}). Profiles were drawn based on tests performed on $L=38$.} 
    \label{fig:PV_waves_refine}
\end{figure}

%\newpage
\clearpage
\section{Supporting tables} \label{app:tables}

\renewcommand{\arraystretch}{1.5} % Adjust this value as needed
\begin{table}[h!]
\centering
\begin{tabular}{|c|c|c|c|c|c|c|c|c|c|}
\hline
\multicolumn{3}{|c|}{\rule{0pt}{0.7cm} Example \rule{0pt}{0.7cm}} &  \multicolumn{6}{|c|}{\rule{0pt}{0.7cm} Mesh \rule{0pt}{0.7cm}} & \multirow{2}{*}{Multiplier} \\ 
\cline{1-9} 
 Base State & Fig. & Parameter & 200   & 400 & 800   & 1600 & 3200   & 6400 & \\ \hline \hline
\multirow{4}{*}{Scaled $\mathcal{PV}$} & \ref{fig:sub-P2_2} & $\tilde{\alpha} = 2$   & 198 & 52 & 13 & 3 & 1 & 0.8 & \multirow{4}{*}{$\times 10^{-4}$}\\ \cline{2-9} 
                           & \ref{fig:sub-CA_1} &$\tilde{\alpha} = 0.2$ & 6.8 & 1.7 & 0.4 & 0.1 & 0.04 & 0.02 & \\ \cline{2-9} 
                           & \ref{fig:sub-CA_2} &$\tilde{\alpha} = 0.8$ & 61 & 16 & 4 & 1 & 0.4 & 0.4 & \\ \cline{2-9} 
                           & \ref{fig:sub-CA_3} &$\tilde{\alpha} = 2.3$   & 514 & 140 & 35 & 9 & 3 & 1 & \\ \hline
 \hline
\multirow{1}{*}{Double Hump} & \ref{fig:sub-CA_5} & $\tilde{\alpha} = 2$ & - & -  &  39 & 9.8 & 2.7 & 0.8 & $\times 10^{-4}$ \\ \hline
                            \hline

\multirow{6}{*}{Gaussian} & \ref{fig:sub-P5_2} & $\gamma = -0.5$ & 3.6 & 0.9& 0.22 & 0.05 &0.01 &0.003 & \multirow{4}{*}{$\times 10^{-4}$} \\ \cline{2-9} 
                           & \ref{fig:sub-P5_3} &$\gamma = -2.7$  & 37.9 & 9.48& 2.37& 0.59& 0.14& 0.03 & \\ \cline{2-9} 
                           & \ref{fig:sub-P5_4} &$\gamma = -4.0$ & 66 & 17 & 4 & 1 & 0.2 & 0.06 & \\ \cline{2-9} 
                           & \ref{fig:sub-P5_5} &$\gamma = -5.2$  & 108 & 28 & 7 & 1 & 0.4 & 0.1 & \\ \cline{2-10} 
                           & \ref{fig:sub-CGF_P1_1} &$\gamma =- 1.7$ & 153 & 60 & 18 & 5 & 1.6 & 0.3 & \multirow{2}{*}{$\times 10^{-3}$}\\ \cline{2-9} 
                           & \ref{fig:sub-CGF_P1_2} &$\gamma =- 1.9$  & 53 & 14.9 & 3.9 & 1 & 0.2 & 0.06 & \\ \hline
                            \hline

\multirow{3}{*}{Sine wave} & \ref{fig:sub-GF_P2_2} & $\omega = 0.5$ &  217& 55& 14& 3.5& 0.9& 0.2 & \multirow{3}{*}{$\times10^{-4}$} \\ \cline{2-9} 
                           & \ref{fig:sub-GF_P2_4} &$\omega = 1$ & 166& 42.8 &10.8 &2.7&0.7 &0.2 & \\ \cline{2-9} 
                           & \ref{fig:sub-GF_P2_6} &$\omega = 2$ & 396& 99 & 24 &6.2& 1.5 &0.3 & \\ \hline
                            \hline

\multirow{2}{*}{Hat functions} & \ref{fig:sub-GF_P3_2} & $h = -0.4$  & 13.3 & 7.17 & 3.86 & 2.35 & 2.02 & 1.81 & \multirow{2}{*}{$\times10^{-2}$} \\ \cline{2-9} 
                           & \ref{fig:sub-GF_P3_4} &$h =- 0.8$  &25.9 & 14.2 & 12.8 & 11.4 & 9.75 & 8.2 & \\

                            \hline \hline

\multirow{1}{*}{Linear} & \ref{fig:sub-GF_P4_2} & $y=-0.25\,x$ & 35.2 & 22.6 & 18 & 14.1 & 10.3 & 7.29 & $\times 10^{-2}$
\\ \hline \hline
\multirow{1}{*}{Soliton} & - & $\bar{f} = 3 \,\mathcal{PV} + 2 $ & - & 19.6 & 4.97 & 2.29 & 1.13 & 0.56 & $\times 10^{-3}$ 
                            \\ \hline d-soliton 
 &  - & $\bar{f} = 4 \,\mathcal{PV} + 3 $ & - & 11.4 & 9.3 & 5.7 & 3.56 & 2.14 & $\times 10^{-1}$ 
\\ \hline
\end{tabular}
\caption{Finite Difference Test: $Err^A$ based on \eqref{eq:fdm_approx}. Soliton refers to the dispersive solitary wave profile (Fig.~\ref{fig:sub-PV_waves_1}), while d-soliton denotes the disintegrated profile (Fig.~\ref{fig:sub-PV_waves_3}), both up to adjusting for value of $u_\infty$ as explained in text. All the tests (except the double-hump example) are performed on $L=8$.}
\label{table:err}
\end{table}

\begin{table}[h!]
\centering
\begin{tabular}{|c|c|c|c|c|c|c|c|}
\hline
\multicolumn{3}{|c|}{\rule{0pt}{0.7cm} Example \rule{0pt}{0.7cm}} &  \multicolumn{5}{|c|}{\rule{0pt}{0.7cm} {$\boldsymbol{\mathfrak{D}(m),(\%)}$} on Mesh  \rule{0pt}{0.7cm}} \\ 
\cline{1-8} 
 Base State & Fig. & Parameter & 200   & 400 & 800   & 1600 & 3200 \\ \hline \hline
\multirow{4}{*}{Scaled $\mathcal{PV}$} & \ref{fig:sub-P2_2} & $\tilde{\alpha} = 2$   & 1.06 & 0.27 & 0.06 & 0.02 & 0.004 \\ \cline{2-8} 
                           & \ref{fig:sub-CA_1} & $\tilde{\alpha}=0.2$ & 1.07 & 0.27 & 0.07 & 0.02 & 0.006 \\ \cline{2-8} 
                           & \ref{fig:sub-CA_2} & $\tilde{\alpha}=0.8$ & 0.67 & 0.17 & 0.04 & 0.01 & 0.002 \\ \cline{2-8} 
                           & \ref{fig:sub-CA_3} & $\tilde{\alpha}=2.3$ & 1.78 & 0.45 & 0.11 & 0.03 & 0.007 \\ \hline
 \hline
\multirow{1}{*}{Double Hump} & \ref{fig:sub-CA_5} & $\tilde{\alpha} = 2$ & - & - & 0.16 & 0.04 & 0.01 \\ \hline
                            \hline

\multirow{6}{*}{Gaussian} & \ref{fig:sub-P5_2} & $\gamma=-0.5$ & 0.71 & 0.18 & 0.05 & 0.01 & 0.005 \\ \cline{2-8} 
                           & \ref{fig:sub-P5_3} & $\gamma=-2.7$ & 0.40 & 0.10 & 0.03 & 0.006 & 0.002 \\ \cline{2-8} 
                           & \ref{fig:sub-P5_4} & $\gamma=-4.0$ & 0.43 & 0.11 & 0.03 & 0.007 & 0.002 \\ \cline{2-8} 
                           & \ref{fig:sub-P5_5} &$\gamma=-5.2$ & 0.89 & 0.22 & 0.06 & 0.01 & 0.003 \\ \cline{2-8} 
                           & \ref{fig:sub-CGF_P1_1} &$\gamma=-1.7$ & 43.62 & 20.40 & 6.53 & 1.76 & 0.44 \\ \cline{2-8} 
                           & \ref{fig:sub-CGF_P1_2} &$\gamma=-1.9$ & 5.55 & 1.60 & 0.41 & 0.10 & 0.03 \\ \hline
                            \hline

\multirow{3}{*}{Sine wave} & \ref{fig:sub-GF_P2_2} &$\omega=0.5$ & 2.96 & 0.72 & 0.18 & 0.04 & 0.01 \\ \cline{2-8} 
                           & \ref{fig:sub-GF_P2_4} &$\omega=1$ & 1.59 & 0.34 & 0.19 & 0.17 & 0.14 \\ \cline{2-8} 
                           & \ref{fig:sub-GF_P2_6} &$\omega=2$ & 9.44 & 2.38 & 0.58 & 0.14 & 0.03 \\ \hline
                            \hline

\multirow{2}{*}{Hat functions} & \ref{fig:sub-GF_P3_2} & $h=-0.4$ & 16.09 & 13.25 & 9.88 & 7.52 & 5.86 \\ \cline{2-8} 
                           & \ref{fig:sub-GF_P3_4} &$h=-0.8$ & 12.35 & 10.68 & 8.89 & 7.52 & 6.52 \\

                            \hline \hline

                            \multirow{1}{*}{Linear} & \ref{fig:sub-GF_P4_2} & $y=-0.25\,x$ & 17.5 & 17 & 16.5 & 15.61 & 13.64
                            \\ \hline \hline
                            \multirow{1}{*}{Soliton} & - & $\bar{f} = 3\, \mathcal{PV} + 2$ & - & 0.29 & 0.14 & 0.071 & 0.036
                            \\ \hline
                             d-soliton & - & $\bar{f} = 4 \,\mathcal{PV} + 3$ & - & 8.56 & 6.82 & 5.2 & 3.64
                            \\ \hline
\end{tabular}
\caption{Values for $\mathfrak{D}(m),(\%)$ are provided for all examples. In the table, the second column, titled {Fig.}, corresponds to the figures illustrating the convergence of obtained primal profiles w.r.t mesh refinement. Soliton refers to the dispersive solitary wave profile (Fig.~\ref{fig:sub-PV_waves_1}), and d-soliton denotes the disintegrated profile (Fig.~\ref{fig:sub-PV_waves_3}), both up to adjusting for value of $u_\infty$ as explained in text. All the tests (except the double-hump example) are performed on $L=8$.}
\label{table:convergence}
\end{table}
\end{document}